\theoremstyle{plain} 
\newtheorem{thm}{Theorem}[section]
\newtheorem{lem}[thm]{Lemma}
\newtheorem{prop}[thm]{Proposition}
\newtheorem{cor}[thm]{Corollary}
\newtheorem{dfn}[thm]{Definition}
\theoremstyle{definition}
\newtheorem{eg}[thm]{Example}
\newtheorem{ques}[thm]{Question}
\newtheorem{rmk}[thm]{Remark}
\numberwithin{equation}{section}
\newcommand{\ZZ}{\mathbb{Z}}
\newcommand{\tensor}{\otimes}
 \DeclareMathOperator{\ann}{ann}
 \DeclareMathOperator{\Tor}{Tor}
 \DeclareMathOperator{\Ext}{Ext}
 \DeclareMathOperator{\Hom}{Hom}
 \DeclareMathOperator{\Supp}{Supp}
 \DeclareMathOperator{\Spec}{Spec}
 \DeclareMathOperator{\Sing}{Sing}
 \DeclareMathOperator{\Se}{S}
 \DeclareMathOperator{\pd}{pd}
 \DeclareMathOperator{\cx}{cx}
 \DeclareMathOperator{\lcx}{lcx}
 \DeclareMathOperator{\tcx}{tcx}
 \DeclareMathOperator{\depth}{depth}
 \DeclareMathOperator{\Z}{Z}
 \DeclareMathOperator{\h}{\eta}
\newcommand{\al}{a\ell}
\begin{document}

\bibliographystyle{plain}

\title{Asymptotic behavior of Tor over complete intersections and applications}
\author{Hailong Dao}

\address{Department of Mathematics, University of Utah,  155 South 1400 East, Salt Lake City,
UT 84112-0090, USA} \email{hdao@math.utah.edu}

\maketitle

\begin{abstract}
Let $R$ be a local complete intersection and $M,N$ are $R$-modules
such that $\ell(\Tor_i^R(M,N))<\infty$ for $i\gg 0$. Imitating an
approach by Avramov and Buchweitz, we investigate the asymptotic
behavior of $\ell(\Tor_i^R(M,N))$ using Eisenbud operators and show
that they have well-behaved growth. We define and study a function
$\eta^R(M,N)$ which generalizes  Serre's intersection multiplicity
$\chi^R(M,N)$ over regular local rings and Hochster's function
$\theta^R(M,N)$ over local hypersurfaces. We use good properties of
$\eta^R(M,N)$ to obtain various results on complexities of $\Tor$
and $\Ext$, vanishing of $\Tor$, depth of tensor products, and
dimensions of intersecting modules over local complete
intersections.
\end{abstract}

\section{Introduction}
In this article we define and study a certain function on pairs of
modules $(M,N)$ over a local complete intersection $R$. We first
recall some notions which inspire our work. In 1961, Serre defined a
notion of intersection multiplicity for two finitely generated
modules $M,N$  over a regular local ring $R$ with
$\ell(M\tensor_RN)<\infty$ as:
$$ \chi^R(M,N) = \displaystyle \sum_{i\geq0} (-1)^i\ell(\Tor_i^R(M,N)) $$

In 1980, Hochster defined a function $\theta^R(M,N)$ for a pair of
finitely generated modules $M,N$ over a local hypersurface $R$ such
that $\ell(\Tor_i^R(M,N))<\infty$ for $i\gg0$ as:
$$ \theta^R(M,N) = \ell(\Tor_{2e+2}^R(M,N)) - \ell(\Tor_{2e+1}^R(M,N)) .$$
where $e$ is an integer such that $e\geq d/2$. It is well known (see
\cite{Ei}) that $\Tor^R(M,N)$ is periodic of period at most 2 after
$d+1$ spots, so this function is well-defined. The vanishing of this
function over certain hypersurfaces was shown by Hochster to imply
the Direct Summand Conjecture. This function is related to what is
called ``Herbrand difference" by Buchweitz in \cite{Bu}. Hochster's
theta function has recently been exploited in \cite{Da1,Da2,Da4} to
study a number of different questions on hypersurfaces, giving new
results on rigidity of $\Tor$, dimensions of intersecting cycles,
depth of $\Hom$ and tensor products, splitting of vector bundles.
These results extend works in \cite{AG2,Au1,Au2,Fa,HW1,HW2,Jot,MNP}
and provide some surprising new links between the classical
homological questions that have been an active part of Commutative
Algebra in the last 50 years.

Our main goal is to define, for a local complete intersection $R$, a
function on any pair of $R$-modules $(M,N)$ satisfying
$\ell(\Tor_i^R(M,N))<\infty$ for $i\gg0 $  which can be viewed as a
generalization of  both Serre's intersection multiplicity and
Hochster's theta function. Our definition will be asymptotic, since
over complete intersections, free resolutions of modules do not have
any obvious ``finite" or ``periodic" property. Therefore, a study of
the growth of lengths of the $\Tor$ modules is an essential first
step.

Our approach for such task is parallel to that of a recent beautiful
paper by Avramov and Buchweitz (\cite{AB1}). The theory of
complexity, which measures the polynomial growth of the Betti
numbers of a module, has long been an active subject of Commutative
Algebra. In their paper, Avramov and Buchweitz  studied complexity
for $\Ext$ modules using Quillen's approach to cohomology of finite
groups and the structure of the total module $\Ext^*(M,N)$ as a
Noetherian module over the ring of cohomology operators (in the
sense of Eisenbud, see \cite{Ei}). One of the technical difficulties
for our approach  is that over the ring of cohomology operators, it
is not clear what structure  $\Tor_*(M,N)$ has. It is well known
that if $\ell(M\tensor_RN)<\infty$ then $\Tor_*(M,N)$ becomes an
Artinian module, however our condition on lengths of $\Tor$ modules
is weaker.

We begin, in the second section, by introducing a notion that is
suitable to describing the structure of $\Tor_*(M,N)$. Let $T =
\bigoplus_i T_i$ be a $\mathbb{N}$-graded module over
$S=R[x_1,..,x_r]$ such that the $x_i$s act with equal negative
degree. $T$ is called {\it{almost Artinian}} if there is an integer
$j$ such that $T_{\geq j} = \bigoplus_{i\geq j}T_i$  is Artinian
over $S$. We collect basic properties for modules in this category.
We also introduce a number of different notions of complexities for
pairs of modules using $\Tor$.

Section \ref{almost_art} is devoted to an investigation of the
structure of the module $\Tor_*(M,N)$ over the ring of homology
operators. When $R = Q/(f_1,...,f_r)$ (here $Q$ is not necessarily
regular), these operators are $R$-linear maps $ x_j:
\Tor_{i+2}^R(M,N) \to \Tor_i^R(M,N) $ for $1 \leq j \leq r$ and
$i\geq 0$. We show that when $\ell(\Tor_i^R(M,N)) <\infty$ for
$i\gg0$, the module $\Tor_*^R(M,N)$ is almost Artinian over the ring
of homology operators $S = R[x_1,...,x_r]$. We also show that  when
$\ell(\Tor_i^R(M,N)) <\infty$ for $i\gg0$, $\Tor_*^R(M,N)$ is almost
Artinian over $S$ if and only if $\Tor_*^Q(M,N)$ is almost Artinian
over $Q$, that is, $\Tor_i^Q(M,N)=0$ when $i\gg0$. This is an
analogue of results obtained by Gulliksen, Avramov-Gasharov-Peeva
and is crucial in our analysis of lengths of $\Tor$ modules later.
Our proof is rather {\it ad hoc}, since almost Artinian modules do
not behave as well as Artinian or Noetherian ones, (see Example
\ref{exalmostart}).

In section \ref{eta} we study the ``adjusted lengths" (which is
equal to normal length, except when the module does not have finite
length, then it is equal to $0$) of $\Tor_i^R(M,N)$, which we call
{\it{generalized Betti numbers}} $\beta_i(M,N)$. We prove properties
for these numbers which subsume previously known results about Betti
numbers of a module over local complete intersections. We arrive at
the main goal of this note, the definition of a function:
$$\h_e^R(M,N):= \lim_{n\to\infty} \frac{\sum_0^{n} (-1)^i \beta_i(M,N)}{n^e}$$
We show that $\h_e^R(M,N)$ is finite when $e$ is at least the
``Tor-complexity" of $(M,N)$, and it is additive on short exact
sequences provided that it is defined on all pairs involved. We also
obtain a change of rings result which relates the values of
$\h^R(M,N)$ and $\h^{R/(f)}(M,N)$ where $f$ is a nonzerodivisor on
$R$.

In  section \ref{comparison}, we compare the several notions of
complexities that arise in our work with the one previously studied
by Avramov and Buchweitz. We prove they coincide when all the higher
$\Tor$ modules have finite length, generalizing (see \ref{AB2}) a
striking result in \cite{AB1} that over a complete intersection, the
vanishing of all higher $\Tor$ modules is equivalent to the
vanishing of all higher $\Ext$ modules. The connections between the
complexities in general seems like a difficult problem and are worth
further investigation.

The rest of the paper is concerned with applications. In section
\ref{c-rigid} we study vanishing behavior of  $\Tor_i^R(M,N)$. The
key idea is to use our results about rigidity over hypersurface in
\cite{Da1} as the base case and our change of rings theorem for
$\eta^R(M,N)$ for the inductive step. Our results in this situation
improve on results in  \cite{Mu,Jo1,Jo2}. In section \ref{appsec},
the main results basically say that under some extra conditions,
good depth of  $M\tensor_RN$ forces the vanishing of $\Tor_i^R(M,N)$
for all $i>0$. These are generalizations of Auslander's classical
result on tensor product over regular local rings and in some senses
improve upon similar results in complete intersections of small
codimensions by Huneke, Jorgensen, and Wiegan in \cite{HJW}.
Finally, in section \ref{intersection} we give some applications on
intersection theory over local complete intersections, extending
results by Hochster and Roberts in \cite{Ho1,Ro}. We also discuss
some interesting questions that give new perspectives on a classical
conjecture that grew out of Serre's work on intersection
multiplicity. We remark that the above list  contains only the most
obvious applications of $\h^R(M,N)$. More technical ones, for
example generalizations of results by Auslander and
Auslander-Goldman, will be the topics of forthcoming papers.

The main ideas in this article can almost certainly be applied to
study vanishing of $\Ext$ modules over complete intersections. In
fact, the module structure of $\Ext^*_R(M,N)$ over the ring of
cohomology operators is much better understood (see \cite{AB1, AB2,
AGP}). We focus our study on $\Tor$, since many of the open
homological questions could be viewed as problems about length of
$\Tor$ modules.

This article grew out of parts of the author's PhD thesis at the
University of Michigan. The author would like to thank his advisor,
Melvin Hochster, for constant support and encouragement. Some of the
work here was done when the author was visiting the Mathematics
Department at University of Nebraska, Lincoln in May 2005. Special
thanks must go to Luchezar Avramov and Srikanth Ieyengar for some
very helpful discussions.

\section{Notation and preliminary results}\label{prelim}

Throughout this section, let $(R,m,k)$ be an Noetherian local ring.
Let $M,N$ be finitely generated $R$-modules. Let $S=R[x_1,..,x_r]$
for indeterminates $x_1,...,x_r$ and $T =  \bigoplus_{i\geq 0} T_i$
be a $\mathbb{N}$-graded module over $S$. We shall start by making
or recalling some definitions.

We define the \textit{adjusted length} of $M$ as :
$$          \al_R(M) :=\left\{ \begin{array}{ll}
          \ell_R(M) & \text{if $\dim(M) =0$}\\
           0   & \text{if $\dim(M) >0$}
          \end{array} \right.
$$
The minimal number of generators of $M$ is denoted by $\mu(M)$.

We define the \textit{finite length index} of $T$ as :
$$f_R(T):=\inf \{ i |\ \ell_R(T_j)<\infty \ \text{for $j \geq i$} \} $$
(If no such $i$ exists, we set $f_R(T) = \infty$).

The \textit{complexity} of a sequence of integers $B = \{b_i\}_{i\geq 0}$ is defined as:\\
 $\cx(B) := \inf \ \{d\in Z \mid b_n\leq an^{d-1} \ \text
{for some real number} \  a  \ \text {and all} \ n\gg 0 \}$

If $f_R(T)<\infty$ then one can define the complexity of $T$ as:
$$\cx_R(T) := \cx(\{al_R(T_i)\})$$

For a pair of $R$-modules $M,N$, let $\Tor_*^R(M,N)
=\bigoplus_i\Tor_i^R(M,N)$ and
$\Ext^*_R(M,N)=\bigoplus_i\Ext^i_R(M,N)$. The concept of
\textit{complexity} for $(M,N)$ was first introduced in \cite{AB1}.
In our notations, their definition becomes:
$$\cx_R(M,N) := \cx_R(\Ext^*_R(M,N)\tensor_R k) $$

Similarly we will define several analogues of $\cx_R(M,N)$. The
\textit{Tor complexity} of $M,N$ is:
$$\tcx_R(M,N) = \cx_R(\Tor_*^R(M,N)\tensor_R k) $$
The \textit{length complexity} of $M,N$ is:
$$\lcx_R(M,N) = \cx_R(\Tor_*^R(M,N))$$

\begin{dfn}\label{almostdef}
Let $T =  \bigoplus_{i\geq 0} T_i$ be a $\mathbb{N}$-graded module
over $S=R[x_1,..,x_r]$ such that the $x_i$s act with equal negative
degree. $T$ is called almost Artinian over $S$ if there is an
integer $j$ such that $T_{\geq j} = \bigoplus_{i\geq j}T_i$  is
Artinian over $S$.
\end{dfn}

Now we collect some  results that will be needed. Some were already
in  the literature, but for some we can not find a reference. First,
we recall the long exact sequence for change of rings
(\ref{longexact}). It follows from the  Cartan-Eilenberg spectral
sequence (\cite{Av2}, 3.3.2).

\begin{prop}\label{longexact}
Let $R=Q/(f)$ such that $f$ is a nonzerodivisor on $Q$, and let
$M,N$ be $R$-modules. Then we have the long exact sequence of
$\Tor$s :

$$        \begin{array}{ll}
...\to \Tor_{n}^R(M,N) \to \Tor_{n+1}^Q(M,N) \to \Tor_{n+1}^R(M,N)\\
\to  \Tor_{n-1}^R(M,N) \to \Tor_{n}^Q(M,N) \to \Tor_{n}^R(M,N)\\
\to ...                                                         \\
\to \Tor_{0}^R(M,N) \to \Tor_{1}^Q(M,N) \to \Tor_{1}^R(M,N) \to 0

\end{array}
$$\\
\end{prop}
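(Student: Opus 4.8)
The plan is to obtain the sequence exactly as the excerpt indicates: feed the surjection $Q\twoheadrightarrow R$ into the Cartan--Eilenberg change of rings spectral sequence and exploit the fact that, since $f$ is a nonzerodivisor, $R$ has a $Q$-free resolution of length one. Concretely, regard $N$ as a $Q$-module via $Q\twoheadrightarrow R$. Since $R\tensor_Q(-)$ sends $Q$-free modules to $R$-free modules, the Grothendieck spectral sequence of the composite $N\mapsto R\tensor_Q N\mapsto M\tensor_R(R\tensor_Q N)$, whose value is $M\tensor_Q N$, yields a first-quadrant homological spectral sequence
$$E^2_{p,q}=\Tor_p^R\!\big(M,\Tor_q^Q(R,N)\big)\ \Longrightarrow\ \Tor_{p+q}^Q(M,N),$$
which is \cite{Av2}, 3.3.2.

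Next I would compute the $E^2$-page. As $f$ is a nonzerodivisor on $Q$, the complex $0\to Q\xrightarrow{f}Q\to0$ is a $Q$-free resolution of $R$; applying $-\tensor_Q N$ and using $fN=0$ gives $\Tor_0^Q(R,N)=N/fN=N$, $\Tor_1^Q(R,N)=(0:_N f)=N$, and $\Tor_q^Q(R,N)=0$ for $q\ge2$. Hence the spectral sequence is supported on the two rows $q=0,1$, with $E^2_{p,0}\cong E^2_{p,1}\cong\Tor_p^R(M,N)$ for $p\ge0$ and zero otherwise. A homological spectral sequence supported on two adjacent rows has exactly one nonzero differential, $d_2\colon E^2_{p,0}\to E^2_{p-2,1}$, i.e.\ a map $\Tor_p^R(M,N)\to\Tor_{p-2}^R(M,N)$, and $E^3=E^\infty$. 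Identifying $E^\infty_{n,0}=\ker(d_2)$ and $E^\infty_{n-1,1}=\coker(d_2)$ and splicing the filtration short exact sequences $0\to E^\infty_{n-1,1}\to\Tor_n^Q(M,N)\to E^\infty_{n,0}\to0$ produces an exact sequence $\cdots\to\Tor_{n-1}^R(M,N)\to\Tor_n^Q(M,N)\to\Tor_n^R(M,N)\to\Tor_{n-2}^R(M,N)\to\Tor_{n-1}^Q(M,N)\to\cdots$, which is precisely the asserted long exact sequence with the connecting maps given by the $d_2$'s. Since $E^2_{-1,1}=0$ the sequence terminates with $\Tor_1^R(M,N)\to0$, and since $\Tor_0^Q(M,N)=M\tensor_R N=\Tor_0^R(M,N)$ its initial segment reads $\cdots\to\Tor_0^R(M,N)\to\Tor_1^Q(M,N)\to\Tor_1^R(M,N)\to0$, matching the display.

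There is no real obstacle here---the argument is bookkeeping once the spectral sequence is granted---but two points deserve care. First, the variance: it is $N$, not $M$, that is first hit by $R\tensor_Q(-)$, and this is exactly what forces $R\tensor_Q N=N$ and $\Tor_1^Q(R,N)=N$, both consequences of $fN=0$; getting this backwards would break the computation of $E^2$. Second, the index arithmetic in the collapse, so that the connecting homomorphism indeed drops homological degree by $2$ and the sequence ends at $\Tor_1^R(M,N)\to0$ rather than one step too early or too late. (One could instead give a hands-on proof by building a $Q$-free resolution of $N$ out of an $R$-free one via a mapping-cone construction, but the spectral-sequence route is shortest and is the one referenced.)
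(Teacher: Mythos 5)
Your proof is correct and takes exactly the route the paper itself indicates---it fleshes out the citation to the Cartan--Eilenberg change-of-rings spectral sequence (\cite{Av2}, 3.3.2) that the paper gives in lieu of an explicit argument. One small quibble: your cautionary remark about variance is unnecessary here, since both $M$ and $N$ are $R$-modules and hence both are annihilated by $f$, so the $E^2$-page computation would go through equally well with the roles of $M$ and $N$ interchanged.
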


\begin{thm}\label{kirby}(\cite{Ki}, theorem 1)
Suppose that all the $x_i$ have equal negative degree. Then $T$ is
an Artinian $S$ module if and only if there are integers $n,l$ such
that :
\\(1) $T_i=0$ for $i<l$
\\(2) $0:_{T_i} (\sum_{j=1}^r x_jR) = 0$ for $i>n$.
\\(3) $T_i$ is an Artinian $R$-module for all $i$. \\
Now suppose that the $x_i$ have equal positive degree $d$. Then $T$ is an Noetherian $S$ module
if and only in there are integer $n,l$ such that:
\\(4) $T_i=0$ for $i<l$
\\(5) $T_{i+d} = \sum_{j=1}^r x_jT_i$ for all $i>n$.
\\(6) $T_i$ is an Noetherian $R$-module for all $i$.
\end{thm}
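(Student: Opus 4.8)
The strategy is to prove the two halves of the statement separately and in parallel, since the Artinian/negative‑degree case and the Noetherian/positive‑degree case are formally dual. I will treat the Artinian half in detail; the Noetherian half is obtained by reversing arrows (submodules $\leftrightarrow$ quotients, ascending $\leftrightarrow$ descending chains, socle $\leftrightarrow$ top). For the forward implication in the Artinian case, suppose $T$ is Artinian over $S$. I would first observe that each graded piece $T_i$ is then an Artinian $R$-module: any descending chain of $R$-submodules of a single $T_i$ generates a descending chain of (graded) $S$-submodules of $T$, since multiplication by the $x_j$ lowers degree and the submodule generated by a subset of $T_i$ in degrees $<i$ is zero. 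That gives (3). For (1), note that $T_{<l}=\bigoplus_{i<l}T_i$ is an $S$-submodule for every $l$ (again because the $x_j$ decrease degree, so nothing in low degree can be pushed up), so the family $\{T_{<l}\}_{l}$ is a descending chain of $S$-submodules whose intersection is $0$; Artinianness forces it to stabilize, i.e. $T_{<l}=0$ for $l\ll 0$, which is exactly (1) after reindexing. For (2), consider the graded submodule $K=\bigoplus_i (0:_{T_i}(\sum_j x_jR))$, i.e. the annihilator of the irrelevant ideal. I claim $K$ has finite length, hence lives in finitely many degrees; granting this, (2) follows. The cleanest route: $K$ is itself an Artinian $S$-module on which all $x_j$ act as zero, so it is Artinian over $R$, hence of finite length over $R$ (as $R$ is Noetherian local, an Artinian $R$-module has finite length), hence concentrated in finitely many degrees — which is (2).

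For the converse, assume (1), (2), (3); I want to show $T$ is Artinian over $S$. The key reduction is to split $T$ as $T = T_{\le n} \oplus' T_{>n}$ in an appropriate sense — more precisely, filter $T$ by the submodules $T_{> j}$ and analyze the pieces. The ``top'' part $\bigoplus_{i\le n}T_i$ has finite length over $R$ by (1)+(3) (finitely many degrees, each Artinian hence finite length), so it is Artinian over $S$ trivially. For the tail $T_{>n}$, condition (2) says the irrelevant ideal acts on it without socle, which I would leverage as follows: to verify Artinianness it suffices to check the descending chain condition on graded submodules, and for a graded $S$-submodule $U \subseteq T_{>n}$, if $U \neq 0$ pick the largest degree $i$ with $U_i \neq 0$... wait, there is no largest degree in general. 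The right argument instead uses (2) to run an \emph{ascending} induction: if $U \subsetneq U'$ are graded submodules agreeing in degrees $\le i$ and $U_{i+2} \subsetneq U'_{i+2}$, then because $x_j$ maps $U'_{i+2}$ to $U'_i = U_i$ and (2) ensures the combined map $(x_1,\dots,x_r)\colon T_{i+2}\to T_i^{\oplus r}$ is injective for $i>n$... I would use this injectivity to bound how long such a chain can grow. Concretely, (2) for all $i>n$ gives that $T_{>n} \hookrightarrow \prod$ of copies of the finite-length module $T_{n+1}\oplus T_{n+2}$ under iterated $x$-action, forcing each $T_i$ ($i>n$) to have length bounded independent of $i$, and more: it forces $T_{>n}$ to be Artinian over $S$ because its submodule lattice injects into that of a finitely generated (indeed finite-length‑in‑each‑degree with bounded lengths) module over the Noetherian ring $S$.

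The main obstacle, and the place requiring genuine care rather than formal nonsense, is precisely this last point: deducing global Artinianness of the tail $T_{>n}$ from the pointwise condition (2) together with (3). The subtlety is that ``$x$-torsion-free in high degrees'' plus ``each piece Artinian'' does not obviously bound the lengths $\ell_R(T_i)$, and without such a bound one cannot conclude the descending chain condition on $S$-submodules. I expect the fix is to exploit that $S$ is Noetherian: the associated graded or a suitable dual module is finitely generated, and condition (2) translates into a statement that a certain Ext or local cohomology module vanishes in high degrees, which is what actually controls the length growth. Alternatively — and this is likely the paper's route, given the citation to Kirby — one works with the Matlis/graded dual $T^\vee$, under which the Artinian‑module conditions (1)–(3) dualize exactly to the Noetherian‑module conditions (4)–(6), so that the two halves of the theorem are literally equivalent via duality and only \emph{one} of them needs a direct proof. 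I would set up that graded Matlis duality carefully (noting the $x_j$ change sign of degree under dualizing, converting negative degree to positive degree), prove the Noetherian half using the standard fact that a graded module over a graded Noetherian ring is Noetherian iff it is ``finitely generated degree by degree and generated in bounded degree,'' and then transport back.
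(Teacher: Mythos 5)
The paper does not supply a proof of this statement: it is quoted verbatim as Theorem~1 of Kirby's 1973 paper, so there is no in-source argument to compare your attempt against. Judged on its own, your proposal establishes the forward implication of the Artinian half modulo one incorrect step, and it does not actually complete the converse.

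In the forward direction, your derivation of (2) invokes the claim that an Artinian module over a Noetherian local ring has finite length. That is false: $E_R(k)$ is Artinian but has infinite length whenever $\dim R>0$. The conclusion you need --- that the socle $K=\bigoplus_i\bigl(0:_{T_i}\sum_j x_jR\bigr)$ is concentrated in finitely many degrees --- is still correct and follows by the same device you used for (1): since every $x_j$ kills $K$, each truncation $K_{\geq j}$ is a graded $S$-submodule of $K$, hence of $T$, and the descending chain $K\supseteq K_{\geq 1}\supseteq K_{\geq 2}\supseteq\cdots$ must stabilize, which forces $K_i=0$ for $i\gg 0$.

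The converse of the Artinian half is the genuine gap, and your direct sketch does not close it: you speak of ``the finite-length module $T_{n+1}\oplus T_{n+2}$,'' but (3) only gives that each $T_i$ is Artinian, not of finite length, and even a uniform length bound would not by itself deliver the descending chain condition on graded $S$-submodules. Your fallback plan --- prove the Noetherian half by the elementary ``finitely many generators in bounded degree'' argument and transport by graded Matlis duality --- is the right idea and is consonant with the paper's Lemma~\ref{dual}. But two cautions: you cannot simply cite Lemma~\ref{dual}, since the paper derives it from Theorem~\ref{kirby}, which would be circular; you must verify the lattice anti-isomorphism directly, using (3) (respectively the already-proved forward direction) to guarantee each $T_i$ is Artinian and hence Matlis-reflexive. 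Moreover, Matlis duality sends Artinian $R$-modules to Noetherian modules over $\hat R$, not over $R$, so you should either work over $\hat R[x_1^\vee,\dots,x_r^\vee]$ and observe that graded $S$-submodules of $T$ agree with graded $\hat R[x_1,\dots,x_r]$-submodules, or reduce to the complete case. With those repairs the plan does go through.
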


\begin{cor}\label{cor1}
Assume that the $x_i$ have equal positive degree $d$ and $T$ is
Noetherian over $S$. Then the sequence of ideals $\{\ann_R(T_i)\}$
eventually becomes periodic of period $d$.
\end{cor}

\begin{proof}
By \ref{kirby} you can choose $n$ such that $T_{i+d} = \sum_{j=1}^r
x_jT_i$ for all $i>n$. Let $J_i = \ann_R(T_i)$. Then $J_{i}
\subseteq J_{i+d}$ for $i>n$. Since $R$ satisfies ACC, the
conclusion follows.
\end{proof}

\begin{cor}\label{3.4}
Let $T,T'$ be such that $\ell(T_i),\ell(T'_i)<\infty$ for all $i$
and $T_i = T'_{i+e}$ for some $e$ and all $i\gg 0$. Then $T$ is
almost Artinian if and only if $T$ is  Artinian if and only if $T'$
is Artinian.
\end{cor}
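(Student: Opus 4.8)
The plan is to prove the three equivalences in a cycle, exploiting the hypothesis that $T$ and $T'$ have \emph{all} graded pieces of finite length together with Theorem~\ref{kirby}. Note first that for a module whose every graded piece already has finite length, the finite length index is $0$, so the only obstruction to being Artinian is condition~(2) of Theorem~\ref{kirby}, namely the eventual vanishing of $0:_{T_i}(\sum_j x_j R)$ — condition~(3) is automatic since finite length $R$-modules are Artinian, and condition~(1) is automatic since $T$ is $\mathbb{N}$-graded. So for such modules, ``Artinian'' is equivalent to the single assertion that $0:_{T_i}(\sum_j x_jR) = 0$ for all large $i$, and likewise ``almost Artinian'' is equivalent to the assertion that $0:_{T_i}(\sum_j x_j R) = 0$ for all large $i$ applied to the tail $T_{\geq j}$ — but the tail has the same graded pieces in high degrees, so this is literally the same condition. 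Hence ``$T$ almost Artinian'' $\iff$ ``$T$ Artinian'' is immediate once one unwinds Theorem~\ref{kirby}.

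For the remaining equivalence ``$T$ Artinian $\iff$ $T'$ Artinian'', I would again reduce both sides, via the observation in the previous paragraph, to the statement that $0:_{T_i}(\sum_j x_j R) = 0$ for $i \gg 0$, resp. $0:_{T'_i}(\sum_j x_j R) = 0$ for $i \gg 0$. Now the hypothesis $T_i = T'_{i+e}$ for all $i \gg 0$ is not merely an equality of $R$-modules but (implicitly, for this to be meaningful) an isomorphism compatible with the action of the $x_j$'s — i.e. $T_{\geq N} \cong T'_{\geq N+e}[-e]$ as graded $S$-modules for $N$ large, where the shift accounts for the re-indexing. Under such an isomorphism the submodules $0:_{T_i}(\sum_j x_j R)$ and $0:_{T'_{i+e}}(\sum_j x_j R)$ correspond for $i \gg 0$, so one of them vanishes for large $i$ precisely when the other does. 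This gives the equivalence.

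The main obstacle — really the only subtle point — is that Theorem~\ref{kirby} as literally stated describes Artinian-ness in terms of conditions on \emph{all} $i$ (conditions (1) and (3)) plus a condition for $i$ large (condition (2)); one has to be careful that an equality/isomorphism valid only in high degrees suffices to transfer Artinian-ness, which a priori is sensitive to low-degree behavior. The resolution is exactly the first paragraph's remark: when every graded piece has finite length, conditions (1) and (3) are free, so Artinian-ness is governed entirely by the eventual condition (2), which is insensitive to any finite amount of low-degree data. I would state this reduction as a preliminary sentence and then the rest is bookkeeping with the index shift $e$.
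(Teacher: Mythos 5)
Your proof is correct, and it uses exactly the machinery the paper intends — the corollary is stated immediately after Theorem~\ref{kirby} and given without proof, so the argument via Kirby's characterization is the natural one. The key reduction you isolate (with all graded pieces of finite length over a Noetherian local ring, condition~(3) is free, condition~(1) is free for $\mathbb{N}$-graded modules, and Artinian-ness is governed solely by the eventual vanishing in condition~(2)) is the right observation, and it cleanly disposes of all three equivalences at once since condition~(2) sees only the high-degree behavior. You are also right to flag that the hypothesis ``$T_i = T'_{i+e}$'' must be read as a degree-shifted isomorphism of graded $S$-modules (not merely an $R$-module equality of pieces), since the colon condition in Kirby's theorem involves the $x_j$-action; the statement as written leaves that implicit and your reading is the only one under which the corollary is meaningful.
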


\begin{lem}\label{3.9}
Let $0\to T' \to T \to T'' \to 0$ be a short exact sequence of $S$
modules such as the maps are homogenous. Then $T$ is almost Artinian
if and only if $T',T''$ are almost Artinian.
\end{lem}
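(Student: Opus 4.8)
The plan is to reduce the statement to two standard facts: a submodule, a quotient, or an extension of Artinian $S$-modules is again Artinian; and, since the $x_i$ act with negative degree, for every homogeneous $S$-module $U$ the piece $U_{<j}=\bigoplus_{i<j}U_i$ is an $S$-submodule, so that $U_{\geq j}$ inherits the quotient structure $U_{\geq j}\cong U/U_{<j}$. I would first record that if $U_{\geq j_0}$ is Artinian then so is $U_{\geq j}$ for every $j\geq j_0$: indeed $U_{<j_0}\subseteq U_{<j}$, so $U_{\geq j}$ is an $S$-module quotient of $U_{\geq j_0}$. Consequently ``$U$ is almost Artinian'' is equivalent to ``$U_{\geq j}$ is Artinian for all $j\gg 0$'', and when several modules occur simultaneously we may use one truncation index for all of them.

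For ``$T$ almost Artinian $\Rightarrow$ $T',T''$ almost Artinian'', choose $j$ with $T_{\geq j}$ Artinian. Since $T'\subseteq T$ is a graded submodule we have $T'\cap T_{<j}=T'_{<j}$, so by the second isomorphism theorem $T'_{\geq j}=T'/T'_{<j}\cong (T'+T_{<j})/T_{<j}$ is an $S$-submodule of $T_{\geq j}$, hence Artinian. Likewise the homogeneous surjection $T\twoheadrightarrow T''$ maps $T_{<j}$ into $T''_{<j}$ and therefore induces a surjection of $S$-modules $T_{\geq j}\twoheadrightarrow T''_{\geq j}$, so $T''_{\geq j}$ is Artinian. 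Thus $T'$ and $T''$ are almost Artinian.

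For the converse, pick (using the first paragraph) a single $j$ with both $T'_{\geq j}$ and $T''_{\geq j}$ Artinian. Applying $U\mapsto U/U_{<j}$ to $0\to T'\to T\to T''\to 0$ — concretely, the snake lemma applied to the inclusion of the evidently exact sequence $0\to T'_{<j}\to T_{<j}\to T''_{<j}\to 0$ into it — produces a short exact sequence of $S$-modules $0\to T'_{\geq j}\to T_{\geq j}\to T''_{\geq j}\to 0$. Since an extension of an Artinian module by an Artinian module is Artinian, $T_{\geq j}$ is Artinian, and so $T$ is almost Artinian.

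The argument is formal, so there is no serious obstacle; the points needing care are that, because the operators lower degree, $T_{\geq j}$ is a quotient of $T$ rather than a submodule (so enlarging the truncation index preserves the Artinian property by passing to a further quotient, not by passing to a submodule), and that truncating the given exact sequence again yields an exact sequence of $S$-modules, which is exactly where homogeneity of the maps enters. One could instead observe that $U\mapsto U_{\geq j}$ is an exact functor on graded $S$-modules and deduce everything from that, but the hands-on version is already short.
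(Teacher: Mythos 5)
Your proof is correct and, since the paper's own proof is simply ``This is obvious from the definitions,'' it is the careful spelling-out of exactly what that terse remark intends. You correctly identify the one genuine subtlety --- that because the operators lower degree, $T_{<j}$ is a submodule and $T_{\geq j}$ carries the quotient module structure --- and you use it consistently for both directions (truncation is right-exact in one direction, and the snake lemma gives exactness of the truncated sequence in the other). Nothing to add.
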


\begin{proof}
This is obvious from the definitions.
\end{proof}

\begin{lem}\label{eisen}(\cite{Ei}, 3.3)
Let $R=k$ be a field and assume $k$ is infinite. Also assume that all the $x_i$ are
of equal negative degree $-d$, and $T$ is an Artinian $S$ module. Then there are elements
$\alpha_i\in k$ such that multiplication by $x=x_r + \sum_{i=2}^{r-1}\alpha_i x_i$ induces
a surjective map $T_{i+d} \to T_i$ for all $i\gg 0$.
\end{lem}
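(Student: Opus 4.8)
The plan is to dualize and reduce to the existence of a general linear form avoiding finitely many associated primes. Since $T$ is Artinian over $S$, Theorem \ref{kirby} gives that each $T_i$ is finite dimensional over $k$ and $T_i=0$ for $i\ll 0$; hence the graded $k$-dual $U:=\bigoplus_i \Hom_k(T_i,k)$, on which each $x_j$ acts by the transpose of $x_j\colon T_{i+d}\to T_i$, is a graded $S$-module in which the $x_j$ act with equal \emph{positive} degree $d$, with $U_i$ finite dimensional for all $i$ and $U_i=0$ for $i\ll 0$. Translating conditions (1)--(3) of Theorem \ref{kirby} through this duality into conditions (4)--(6), $U$ is a \emph{finitely generated} $S$-module; in particular $\Ass_S(U)$ is finite. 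For a $k$-linear combination $x$ of the $x_j$, the map $x\colon T_{i+d}\to T_i$ is surjective if and only if its transpose $x\colon U_i\to U_{i+d}$ is injective; so the lemma is equivalent to the statement that, for a suitable such $x$, the graded submodule $(0:_U x)$ vanishes in all large degrees.

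Now $(0:_U x)$ is finitely generated, so it vanishes in large degrees if and only if it has finite length, if and only if $\Ass_S(0:_U x)\subseteq\{\mathfrak m\}$ where $\mathfrak m=(x_1,\dots,x_r)$. Since $\Ass_S(0:_U x)\subseteq\Ass_S(U)$, and since $x\in\mathfrak p$ whenever $\mathfrak p=\Ann_S(u)$ for some $u$ annihilated by $x$, it is enough to choose $x$ outside every $\mathfrak p\in\Ass_S(U)$ with $\mathfrak p\neq\mathfrak m$. Each such $\mathfrak p$ is a homogeneous prime properly contained in $\mathfrak m$, and its degree-$d$ component $\mathfrak p_d$ is then a \emph{proper} $k$-subspace of $S_d$: if $\mathfrak p_d=S_d$ then $x_j^{\,d}\in\mathfrak p$, hence $x_j\in\mathfrak p$ for all $j$, forcing $\mathfrak p=\mathfrak m$. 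As $\Ass_S(U)$ is finite, $\bigcup_{\mathfrak p\neq\mathfrak m}\mathfrak p_d$ is a finite union of proper subspaces of the finite dimensional $k$-vector space $S_d$; since $k$ is infinite, $S_d$ is not such a union, so there is a $k$-linear combination $x=\sum_j\alpha_j x_j$ outside it. Moreover the good locus is Zariski dense in $S_d$, so one can further arrange (after possibly relabelling the $x_j$, a routine general position point) the normalization $x=x_r+\sum_{i=2}^{r-1}\alpha_i x_i$ of the statement. For this $x$, $(0:_U x)$ has finite length, hence $x\colon U_i\to U_{i+d}$ is injective for $i\gg 0$, and dualizing back, $x\colon T_{i+d}\to T_i$ is surjective for $i\gg 0$.

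The step I would watch most carefully is the passage to the dual: one must be sure that ``$T$ Artinian over $S$'' really does become ``$U$ finitely generated over $S$'' --- equivalently, that the two halves of Theorem \ref{kirby} correspond under $k$-duality --- since this is precisely what makes $\Ass_S(U)$ finite and hence makes the linear-form argument go through. Everything after that is the standard prime-avoidance (superficial-element) mechanism, with the hypothesis that $k$ is infinite used only to ensure that $S_d$ is not covered by the finitely many proper subspaces $\mathfrak p_d$.
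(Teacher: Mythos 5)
The paper does not prove Lemma~\ref{eisen}; it is imported verbatim from \cite{Ei}, 3.3, so there is no in-paper argument to compare against. Evaluated on its own, your proof is correct and follows the standard superficial-element route: dualize to turn ``Artinian over $S$'' into ``Noetherian over $S$'' (exactly what the paper's Lemma~\ref{dual} provides, so your flagged worry there is unfounded), translate the desired eventual surjectivity on $T$ into eventual injectivity on $U=T^{\vee}$, and then pick, by prime avoidance over the infinite field $k$, a degree-$d$ form lying outside the finitely many non-maximal primes in $\Ass_S(U)$.

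Two small points. First, in arguing that a non-maximal homogeneous prime $\mathfrak p\in\Ass_S(U)$ has $\mathfrak p_d\subsetneq S_d$, you write ``if $\mathfrak p_d=S_d$ then $x_j^{\,d}\in\mathfrak p$, hence $x_j\in\mathfrak p$''; with the $x_j$ of degree $d$ in the dual grading, $x_j^{\,d}$ has degree $d^2$, so this intermediate step is off. The correct (and simpler) observation is that $S_d=kx_1+\cdots+kx_r$, so $\mathfrak p_d=S_d$ directly puts every $x_j$ in $\mathfrak p$. Second, the normalization $x=x_r+\sum_{i}\alpha_i x_i$ additionally requires the chosen general form to have nonzero $x_r$-coefficient; since the bad locus is a finite union of proper subspaces, this is an open, dense condition and poses no obstruction --- your ``general position'' remark covers it, though one should say explicitly that scaling makes the $x_r$-coefficient equal to $1$. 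Neither point is a genuine gap; the argument stands.
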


\begin{lem}\label{gulik}(\cite{Gu}, 1.3)
Let $x: T\to T$ be a homogenous $S$-linear map of negative degree. Assume that $\ker(x)$ is
an Artinian $S$-module. Then $T$ is an Artinian $S[x]$-module.
\end{lem}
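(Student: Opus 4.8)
The plan is to verify that $T$, viewed as a graded module over $S':=S[x]=R[x_1,\dots,x_r,x]$ (with $x$ added as a polynomial variable and acting via the given map), satisfies conditions (1)--(3) of Theorem~\ref{kirby} characterizing Artinian modules in the negative-degree case. The first thing to appreciate is that one cannot argue by exhibiting $T$ as an increasing union of Artinian submodules: since $T$ is $\mathbb N$-graded and $x$ lowers degrees, $x$ is locally nilpotent and $T=\bigcup_n\ker(x^n)$, with each $\ker(x^n)$ Artinian over $S$ (induct, using the injection $\ker(x^n)/\ker(x^{n-1})\hookrightarrow\ker(x)$ induced by $x^{n-1}$); but this union need not stabilize, so it does not by itself give Artinianness over $S[x]$. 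Circumventing this via the graded criterion is really the content of the lemma.

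So I would check (1)--(3) directly. Condition (1), that $T_i=0$ for $i\ll 0$, is immediate from $\mathbb N$-gradedness. For (3) I would induct on $i$: writing $-d'$ for the degree of $x$ and noting that $\ker(x)$ is a graded $S$-submodule (as $x$ is homogeneous and $S$-linear), multiplication by $x$ yields an exact sequence $0\to\ker(x)_i\to T_i\to (xT)_{i-d'}\to 0$ with $(xT)_{i-d'}\subseteq T_{i-d'}$; here $\ker(x)_i$ is $R$-Artinian because $\ker(x)$ is $S$-Artinian (Theorem~\ref{kirby}(3)), and $(xT)_{i-d'}$ is $R$-Artinian by the inductive hypothesis (or zero when $i<d'$), so $T_i$ is an extension of $R$-Artinian modules, hence $R$-Artinian. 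For (2), an element of $T_i$ killed by $x_1,\dots,x_r$ and by $x$ lies in $\ker(x)_i$ and is killed there by $\sum_j x_jR$, so that $0:_{T_i}\!\big(\sum_{j}x_jR+xR\big)=0:_{\ker(x)_i}\!\big(\sum_j x_jR\big)$, which vanishes for $i\gg 0$ by Theorem~\ref{kirby}(2) applied to the $S$-Artinian module $\ker(x)$. With (1)--(3) in hand, Theorem~\ref{kirby} gives that $T$ is Artinian over $S[x]$.

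The one technical wrinkle is that Theorem~\ref{kirby} is stated for variables of \emph{equal} negative degree, whereas $x$ may a priori have a negative degree $-d'$ distinct from the common degree $-d$ of the $x_j$; however, the Artinian direction of that criterion uses only that all the degrees are negative (so that the chain conditions propagate upward from the bottom of the $\mathbb N$-grading), so nothing changes — and in the situation where the lemma is used, namely with $x$ the operator produced by Lemma~\ref{eisen}, one has $d'=d$ anyway. I do not anticipate a genuine obstacle here: the statement is essentially Gulliksen's, and once one commits to checking Kirby's criterion the remaining work is routine bookkeeping, the only genuinely non-obvious point being the realization that the union-of-Artinians approach has to be avoided.
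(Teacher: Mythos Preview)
The paper does not prove this lemma; it is cited without proof from Gulliksen \cite{Gu}, so there is no in-paper argument to compare against. Your approach---checking Kirby's three conditions for $T$ over $S[x]$ and then invoking Theorem~\ref{kirby}---is correct: the degree induction establishing (3) and the identity $0:_{T_i}\big(\sum_j x_jR+xR\big)=0:_{\ker(x)_i}\big(\sum_j x_jR\big)$ establishing (2) are both valid, and (1) is immediate from the $\mathbb{N}$-grading. Your remark that the union-of-$\ker(x^n)$ strategy fails is also to the point.

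The only soft spot is the one you flag: Theorem~\ref{kirby} is stated for variables of \emph{equal} negative degree, whereas $x$ need not share the degree of the $x_j$. Your handling of this is adequate but could be firmed up. In the paper's sole application of the lemma (inside the proof of Lemma~\ref{AA}) the added operator is $x_r$ itself, of degree $-2$ like the others, so Kirby applies verbatim there and nothing further is needed. For the general statement, the cleanest patch is to note that the Noetherian half of Kirby's criterion goes through unchanged for unequal positive degrees (condition (5) still forces $T$ to be finitely generated over the Noetherian polynomial ring), and then graded Matlis duality as in Lemma~\ref{dual} transports this to the Artinian side whenever the $T_i$ have finite length---which is precisely the situation throughout the paper. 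With that observation your proof is complete for every purpose the paper requires.
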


\begin{eg}\label{exalmostart}
Gulliksen result above fails for almost Artinian module. Let $R$
be a local ring such that $\dim R>0$. Take $T$ such that $T_i=R$,
$x: T_{i+1} \to T_i$ be the identity map for all $i$ and $S=R$.
Then $\ker(x) = T_0$ is clearly almost Artinian. However, $T$ is
not an almost Artinian $S[x]$-module, since this would immply, by
definition and \ref{kirby}, that each $T_i$ is Artinian for $i\gg
0$.
\end{eg}

\begin{lem}\label{length1}
Let  $I$ be an $R$-ideal that kills $M$. Assume $I$ is $m$-primary.
Then:
$$ \mu(M)\ell(R/I) \geq \ell(M) \geq \mu(M) $$
\end{lem}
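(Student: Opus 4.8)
The plan is to prove the two inequalities in Lemma \ref{length1} separately, both by exploiting a minimal generating set of $M$.

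For the right-hand inequality $\ell(M) \geq \mu(M)$: since $I$ is $m$-primary and $IM = 0$, the module $M$ is annihilated by a power of $m$, hence $\dim M = 0$ and $\ell(M) < \infty$, so both quantities are finite. Now consider a minimal generating set $u_1, \dots, u_t$ of $M$ with $t = \mu(M)$. By Nakayama's lemma, the images $\bar u_1, \dots, \bar u_t$ form a $k$-basis of $M/mM$, so $\ell(M/mM) = \mu(M)$. Since $\ell(M) = \ell(mM) + \ell(M/mM) \geq \ell(M/mM)$, we get $\ell(M) \geq \mu(M)$.

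For the left-hand inequality $\mu(M)\ell(R/I) \geq \ell(M)$: again fix a minimal generating set $u_1, \dots, u_t$ of $M$, giving a surjection $\phi : R^t \twoheadrightarrow M$. Since $I$ kills $M$, this factors through a surjection $(R/I)^t \twoheadrightarrow M$. Taking lengths across this surjection gives $\ell(M) \leq \ell((R/I)^t) = t\,\ell(R/I) = \mu(M)\ell(R/I)$. (Here $\ell(R/I) < \infty$ because $I$ is $m$-primary, so all lengths are finite and the additivity/monotonicity of length applies.)

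Neither step is a serious obstacle; this is a routine lemma. The only point that requires a moment's care is checking that all the lengths involved are actually finite so that the manipulations are legitimate — this follows immediately from the hypothesis that $I$ is $m$-primary together with $IM = 0$. The key conceptual inputs are just Nakayama's lemma (for the lower bound) and the fact that a module killed by $I$ is naturally an $R/I$-module, together with additivity of length on surjections (for the upper bound).
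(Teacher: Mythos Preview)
Your proof is correct and follows essentially the same approach as the paper: for the upper bound you factor a free presentation through $(R/I)^{\mu(M)}$ (the paper phrases this as tensoring $R^{\mu(M)}\twoheadrightarrow M$ with $R/I$), and for the lower bound you use the surjection $M\twoheadrightarrow M/mM$ together with $\ell(M/mM)=\mu(M)$.
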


\begin{proof}
The left inequality follows by tensoring the surjection:
$$ R^{\mu(M)} \to M \to 0$$
with $R/I$ to get :
$$ (R/I)^{\mu(M)} \to M \to 0$$
The right inequality follows from the surjection:
$$ M \to M/mM \to 0$$
\end{proof}

\begin{lem}\label{dual}
Let $ T= \bigoplus_{i\geq 0} T_i$ be a graded
$S=R[x_1,..,x_r]$-module, with all $x_i$ having equal negative
degrees. Suppose $f_R(T)=0$, (i.e that all $T_i$ have finite
length). Let $T^{\vee}$ denote $\bigoplus_{i}\Hom_R(T_i,E_{R}(k)) =
\Hom_{graded R-modules}(T, E_{R}(k))$, the graded Matlis dual. Then
$T^{\vee}$ becomes a graded module over  $S'=
R[x_1^{\vee},...,x_r^{\vee}]$. Furthermore, $T$ is an Artinian
$S$-module if and only if $T^{\vee}$ is a Noetherian $S'$-module.
\end{lem}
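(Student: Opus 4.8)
The plan is to build the $S'$-module structure on $T^\vee$ by transposing each Eisenbud operator $x_j$, and then to deduce the Artinian/Noetherian equivalence from Matlis duality applied degree-by-degree, using Kirby's criterion (Theorem \ref{kirby}) to translate between the two finiteness conditions. First I would fix the conventions: since each $x_j : T_{i+d} \to T_i$ is $R$-linear and of negative degree $-d$ (say all $x_i$ have degree $-d$), and each $T_i$ has finite length, the Matlis dual $(-)^\vee = \Hom_R(-, E_R(k))$ sends $T_i$ to a module $(T_i)^\vee$ of the same finite length, and turns the map $x_j$ into its transpose $x_j^\vee : (T_i)^\vee \to (T_{i+d})^\vee$, which now has \emph{positive} degree $+d$. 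Declaring $x_j^\vee$ to act this way makes $T^\vee = \bigoplus_i (T_i)^\vee$ into a graded module over $S' = R[x_1^\vee,\dots,x_r^\vee]$; commutativity of the $x_j^\vee$ follows from commutativity of the $x_j$ by functoriality of Matlis duality (the $x_j$ commute because they are cohomology/homology operators, but for the purposes of this lemma one may simply assume $T$ is an $S$-module and note that duality is a contravariant functor). This part is routine.

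Next I would set up the two sets of Kirby conditions. For $T$ Artinian over $S$, Theorem \ref{kirby} gives integers $n,l$ with: (1) $T_i = 0$ for $i < l$; (2) $0 :_{T_i}(\sum_j x_j R) = 0$ for $i > n$; (3) each $T_i$ Artinian over $R$ — and (3) is automatic here since $T_i$ has finite length. For $T^\vee$ Noetherian over $S'$ (whose generators have positive degree $d$), Theorem \ref{kirby} gives: (4) $(T^\vee)_i = 0$ for $i$ below some bound; (5) $(T^\vee)_{i+d} = \sum_j x_j^\vee (T^\vee)_i$ for $i$ large; (6) each $(T^\vee)_i$ Noetherian over $R$ — again automatic by finite length. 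Condition (4) is exactly dual to (1) since $(T_i)^\vee = 0 \iff T_i = 0$. The heart of the matter is that (2) and (5) are Matlis-dual to each other: the key point is that for finitely-generated (here, finite-length) modules, applying $(-)^\vee$ converts the statement "$\sum_j x_j^\vee(T^\vee)_i = (T^\vee)_{i+d}$", i.e. "the $R$-linear map $\bigoplus_{j=1}^r (T^\vee)_i \xrightarrow{(x_j^\vee)} (T^\vee)_{i+d}$ is surjective", into the statement "the dual map $T_{i+d} \xrightarrow{(x_j)} \bigoplus_{j=1}^r T_i$ is injective", which is precisely "$0 :_{T_{i+d}}(\sum_j x_j R) = 0$". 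Here I use that $\Hom_R(-, E_R(k))$ is exact and faithful and that $(\bigoplus)^\vee = \prod = \bigoplus$ for finite direct sums, together with the canonical biduality isomorphism $T_i \cong (T_i)^{\vee\vee}$ in finite length.

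So the argument reduces to: $T$ Artinian over $S$ $\iff$ conditions (1),(2) hold for some $n,l$ $\iff$ (by the degree-wise duality above, with a reindexing shift by $d$) conditions (4),(5) hold for $T^\vee$ for some bound $\iff$ $T^\vee$ Noetherian over $S'$. I would write the two implications symmetrically, noting that biduality makes the argument reversible (and handles the slightly annoying bookkeeping that going from $T$ to $T^\vee$ swaps "$i > n$" with a bound on the dual indices shifted by one multiple of $d$). The main obstacle, such as it is, is purely bookkeeping: keeping straight which direction the operators point after dualizing (negative degree becomes positive degree, so $x_j$ lowering degree corresponds to $x_j^\vee$ raising it), and correctly matching the "kernel vanishes above $n$" condition for $T$ with the "generated in high degree" condition for $T^\vee$ under the index shift. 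There is no deep content beyond exactness and faithfulness of Matlis duality on finite-length modules plus Kirby's theorem; the finite-length hypothesis $f_R(T) = 0$ is exactly what makes duality behave perfectly degree-by-degree and makes conditions (3) and (6) vacuous.
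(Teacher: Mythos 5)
Your proof is correct, but it takes a genuinely different route from the paper's. The paper does not verify Kirby's conditions one by one. Instead it observes that, in the graded setting, being Noetherian (resp.\ Artinian) over $S'$ (resp.\ $S$) is equivalent to the ACC (resp.\ DCC) on \emph{graded} submodules — a fact the paper says one could deduce from Theorem~\ref{kirby} — and then uses that Matlis duality on finite-length components induces an order-\emph{reversing} bijection between the lattices of graded submodules of $T$ and of $T^\vee$. With that single observation, Artinian for $T$ and Noetherian for $T^\vee$ become literally the same chain condition read in opposite directions, and the proof is finished in two sentences. Your argument reaches the same conclusion by translating Kirby's conditions (1)--(3) into (4)--(6) via degree-by-degree duality: the hypothesis $f_R(T)=0$ makes (3),(6) automatic, $(T_i)^\vee = 0 \iff T_i = 0$ matches (1) with (4), and the exactness of $\Hom_R(-, E_R(k))$ on finite-length modules converts the injectivity statement $0:_{T_{i+d}}(\sum_j x_j R)=0$ into the surjectivity statement $(T^\vee)_{i+d} = \sum_j x_j^\vee (T^\vee)_i$. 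Both arguments are sound; yours is more explicit and keeps the Eisenbud-operator mechanics in view (which can be reassuring for tracking degree shifts), while the paper's lattice-of-submodules argument is shorter, avoids index bookkeeping entirely, and makes it transparent why the duality exchanges the two chain conditions. One small point worth being careful about in your version: Kirby's condition (2) in the paper is stated as $0:_{T_i}(\sum_j x_j R)=0$ for $i>n$, so when you rewrite it as injectivity of $T_{i+d}\to\bigoplus_j T_i$, you are shifting the index; that shift is harmless, but it is exactly the bookkeeping the paper's approach sidesteps.
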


\begin{proof}
The first statement follows from the fact that Matlis dual is a
contravariant functor. For the second statement, we just note that
in this situation, being Noetherian (respectively Artinian) is the
same as satisfying the ACC (respectively DCC) condition on graded
submodules (one could use \ref{kirby} to see this). Since
$(-)^{\vee}$ gives a order-reserving bijection between the sets of
graded submodules of $T$ and $T^{\vee}$, we are done.
\end{proof}

\begin{lem}\label{P1}
Suppose that $T$ is almost Artinian over $S$, with $x_i$ of negative degree $-d_i$.
Then one can write :
$$ P_T(t)= \sum_i a\ell(T_i)t^i = {p(t)\over \prod_1^r (1-t^d_i)}$$
with $p(t)\in \ZZ[t]$.
\end{lem}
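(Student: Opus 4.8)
The plan is to reduce to the case where $T$ is genuinely Artinian over $S$, then dualize and apply the classical Hilbert–Serre argument for Noetherian graded modules. First I would replace $T$ by its tail: since $T$ is almost Artinian there is an integer $j$ so that $T_{\geq j}$ is Artinian over $S$. The difference $T - T_{\geq j} = \bigoplus_{i<j} T_i$ contributes only a polynomial in $t$ to $P_T(t)$ (a finite sum), so it suffices to establish the rational form of $P_{T_{\geq j}}(t)$; adding a polynomial keeps the shape $p(t)/\prod(1-t^{d_i})$. Note also that once $T_{\geq j}$ is Artinian, condition (3) of Theorem \ref{kirby} forces each $T_i$ with $i \geq j$ to be an Artinian $R$-module, and since we are taking $\al_R(T_i)$ we may as well assume each relevant $T_i$ has finite length (the infinite-dimensional ones contribute $0$ to $P_T$ and can only occur in the initial segment anyway).

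Next I would pass to a field. The quantity $\al_R(T_i) = \ell_R(T_i)$ for the finite-length pieces, and $\ell_R$ is additive on composition series; more to the point, I want to invoke Lemma \ref{eisen}, which requires an infinite residue field. So I would first note that the Poincaré series is insensitive to the faithfully flat base change $R \to R[u]_{mR[u]}$ (lengths of finite-length modules are preserved, and $\al$ of positive-dimensional modules stays $0$), so we may assume $k$ is infinite. Then I would use the graded Matlis dual of Lemma \ref{dual}: set $T^{\vee} = \bigoplus_i \Hom_R(T_i, E_R(k))$, which is a graded module over $S' = R[x_1^{\vee},\dots,x_r^{\vee}]$ with the $x_i^{\vee}$ of \emph{positive} degree $d_i$, and by Lemma \ref{dual}, $T_{\geq j}$ Artinian over $S$ is equivalent to $(T_{\geq j})^{\vee}$ Noetherian over $S'$. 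Since Matlis dual preserves lengths of finite-length modules, $\ell_R((T_{\geq j})^{\vee}_i) = \ell_R(T_i)$ for $i \geq j$, so the two Poincaré series agree up to the polynomial coming from reindexing/truncation.

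Finally, for the Noetherian graded module $U := (T_{\geq j})^{\vee}$ over $S' = R[y_1,\dots,y_r]$ with $\deg y_i = d_i > 0$ and each $U_i$ of finite length, I would run the standard Hilbert–Serre induction on $r$: for $r = 0$, $U$ is a finitely generated $R$-module with finite-length graded pieces, hence (being Noetherian) has $U_i = 0$ for $i \gg 0$, so $\sum \ell_R(U_i) t^i$ is a polynomial. For the inductive step, multiplication by $y_r$ gives an exact sequence $0 \to K \to U \xrightarrow{y_r} U(d_r) \to C \to 0$ where $K = (0 :_U y_r)$ and $C = U/y_r U$ are finitely generated graded modules over $S'/(y_r) = R[y_1,\dots,y_{r-1}]$, still with finite-length graded pieces; additivity of $\ell_R$ across this four-term sequence yields $(1 - t^{d_r}) P_U(t) = P_K(t) - t^{d_r} P_C(t)$ up to sign bookkeeping, and by induction the right-hand side equals $q(t)/\prod_{i=1}^{r-1}(1 - t^{d_i})$ for some $q(t) \in \ZZ[t]$, giving $P_U(t) = q(t)/\prod_{i=1}^{r}(1 - t^{d_i})$. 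Tracking back through the dual and the truncation, $P_T(t)$ has the claimed form.

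The main obstacle is the bookkeeping around the almost-Artinian-to-Artinian reduction: one must be careful that truncating $T$ at level $j$, and the degree-shifts introduced by $(-)(d_r)$ and by Matlis duality, only ever modify the numerator by a polynomial in $\ZZ[t]$ and never disturb the denominator $\prod(1-t^{d_i})$ — this is routine but is where an error would creep in. The actual Hilbert–Serre induction is entirely standard once the setup is in place, and Example \ref{exalmostart} is a useful warning that one genuinely cannot skip the reduction and work with $T$ directly over $S$.
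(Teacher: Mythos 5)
Your proposal is correct and follows the paper's own approach: truncate $T$ to the Artinian tail $T_{\geq f_R(T)}$, observe that the initial segment contributes only a polynomial to $P_T(t)$, and then establish the rational form for Artinian graded modules over $S$ --- a step the paper dispatches by citing ``standard facts on Artinian modules over graded rings'' and which you supply explicitly via Matlis duality (Lemma \ref{dual}) to a Noetherian module over $S' = R[x_1^{\vee},\dots,x_r^{\vee}]$ followed by the classical Hilbert--Serre induction. One minor inefficiency: the reduction to an infinite residue field and the appeal to Lemma \ref{eisen} are superfluous, since your Hilbert--Serre step uses only the four-term exact sequence attached to multiplication by the single operator $y_r$, not a generic linear combination of the operators.
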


\begin{proof}

Let $N=f_R(T)$. Then the graded module $T'=T_{\geq N}$  is an
Artinian module over $S$. So we have:
$$ P_T(t)= P_{T'}(t)+ q(t) $$
for some $q(t) \in \ZZ[t]$. The result now follows from standard
facts on Artinian modules over graded rings.
\end{proof}

\section{Almost Artinian structure of
$\Tor_*^R(M,N)$}\label{almost_art}

Let $R,Q$ be local rings such that $R = Q/(f_1,..,f_r)$ and
$(f_1,...,f_r)$ is a regular sequence on $Q$. Let $M,N$ be finitely
generated $R$-modules. It is well known that $\Tor_*^R(M,N)$ has a
module structure over the ring of cohomology operators
$S=R[x_1,\dots,x_r]$. We begin by reviewing standard facts about
these  operators induced by $f_1,\dots,f_r$.

\begin{prop}\label{Eiops}
Let $R,Q,M,N$ be as above. Then for $1\leq j \leq r$ there are $R$-linear maps:
$$ x_j: \Tor_{i+2}^R(M,N) \to \Tor_i^R(M,N) $$
for $i \geq 0$ which satisfy the following properties:
\begin{enumerate}
\item $\Tor_*^R(M,N)$ becomes a graded module over the ring
$S=R[x_1,...,x_r]$ (with each $x_i$ has degree $-2$).

\item When $r=1$, the map $x_1$ is (up to sign) the connecting
homomorphism in the long exact sequence of $\Tor$s for change of
rings \ref{longexact}.

\item For any $1\leq s \leq r$, the operators $x_1,...,x_s$ act on
$\Tor_*^R(M,N)$ in two ways: the initial one, from $R =
Q/(f_1,...,f_r)$, and a new one, from the presentation $R
=R'/(f_1,...,f_s)$ with $R' = Q/(f_{s+1},...,f_r)$.

\end{enumerate}

\end{prop}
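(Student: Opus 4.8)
The plan is to construct the Eisenbud operators explicitly via a system of higher homotopies on a free resolution, and then verify the three listed properties by direct (if somewhat involved) bookkeeping. First I would fix a free resolution $F_\bullet \to M$ over $R$ and lift it to a sequence of maps over $Q$: since $f_1,\dots,f_r$ annihilate $M$, choosing $Q$-free lifts $\tilde F_\bullet$ of $F_\bullet$ together with a lift $\tilde d$ of the differential, one gets $\tilde d^2 = \sum_j f_j \tilde s_j$ for some degree $-2$ homotopies $\tilde s_j$ on $\tilde F_\bullet$. Reducing $\tilde s_j$ modulo $(f_1,\dots,f_r)$ produces chain maps $\bar s_j \colon F_\bullet \to F_\bullet[-2]$ that are well-defined up to homotopy (this is the content of \cite{Ei}), and tensoring with $N$ and passing to homology gives the maps $x_j \colon \Tor_{i+2}^R(M,N) \to \Tor_i^R(M,N)$. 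I would record that the $x_j$ are $R$-linear because everything in sight is a map of complexes of $R$-modules.

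For property (1), the key points are that the $\bar s_j$ commute with the differential up to homotopy and commute with each other up to homotopy — both of which follow from refining the homotopy $\tilde d^2 = \sum f_j \tilde s_j$ to a full \emph{system of higher homotopies} $\{\sigma_A\}$ indexed by monomials $A$ in the $x_j$, as in Eisenbud's original construction (or Avramov's account in \cite{Av2}). On homology the higher homotopies vanish, so the induced action of $S = R[x_1,\dots,x_r]$ on $\Tor_*^R(M,N)$ is genuinely that of a (graded-)commutative polynomial ring acting $R$-linearly, with each $x_j$ of degree $-2$; one also checks independence of the choices (different resolutions or different homotopies give chain-homotopic data, hence the same map on homology). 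This is the step I expect to be the most delicate, mainly because it is the one where one must be careful about signs and about the compatibility of the higher homotopies — though conceptually it is standard.

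For property (2), when $r=1$ I would simply compare the construction above with the Cartan–Eilenberg spectral sequence computation that yields the long exact sequence of Proposition \ref{longexact}: the connecting homomorphism $\Tor_{i+1}^R(M,N) \to \Tor_{i-1}^R(M,N)$ in that sequence is, by its very derivation, the map induced on homology by the single homotopy $\tilde s_1$ for $\tilde d^2 = f_1 \tilde s_1$, which is exactly $x_1$ up to sign; this is again recorded in \cite{Ei} and \cite{Av2}. For property (3), the point is that a system of higher homotopies for the presentation $R = Q/(f_1,\dots,f_r)$ restricts to a system of higher homotopies for the intermediate presentation $R = R'/(f_1,\dots,f_s)$, $R' = Q/(f_{s+1},\dots,f_r)$: one obtains $R'$-free lifts of $F_\bullet$ by reducing the $Q$-free lifts modulo $(f_{s+1},\dots,f_r)$, and the homotopies $\sigma_A$ for $A$ a monomial in $x_1,\dots,x_s$ descend to give the required data over $R'$. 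Since the operators $x_1,\dots,x_s$ are then, in both descriptions, induced by the same reduced homotopies $\bar s_1,\dots,\bar s_s$, they act the same way on $\Tor_*^R(M,N)$. I would emphasize that none of this uses regularity of $Q$, only that $f_1,\dots,f_r$ is a regular sequence on $Q$ and annihilates $M$.
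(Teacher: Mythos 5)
Your construction is correct, but you take a noticeably more self-contained route than the paper. The paper proves this proposition essentially by citation: part~(1) is taken directly from Eisenbud's Proposition~1.6 in \cite{Ei}, and parts~(2) and~(3) are obtained by \emph{dualizing} the corresponding statements for $\Ext$ in Avramov's Proposition~2.3 of \cite{Av1} (tensoring the exact sequence~(2.5.1) there with $N$), or alternatively by invoking the comparison of constructions in \cite{AS}. You instead re-derive the operators from scratch on the $\Tor$ side, via a system of higher homotopies on a lifted $Q$-free resolution, and then read off all three properties from the construction itself: commutativity of the $\bar s_j$ (up to homotopy) gives~(1), identification of $\tilde s_1$ with the connecting map of the Cartan--Eilenberg sequence gives~(2), and restriction of the higher-homotopy system modulo $(f_{s+1},\dots,f_r)$ gives~(3). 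Both routes rest on the same underlying Eisenbud/Shamash machinery; what yours buys is independence from the $\Ext$ formulation (no dualization step), at the cost of having to carry the full system $\{\sigma_A\}$ and verify homotopy-commutativity and independence of choices explicitly --- exactly the bookkeeping the paper avoids by delegating to \cite{Ei}, \cite{Av1}, and \cite{AS}. One small imprecision worth tightening: the identity $\sum_j f_j(\tilde d\,\tilde s_j - \tilde s_j\,\tilde d) = 0$ together with regularity of $(f_1,\dots,f_r)$ already makes the reduced $\bar s_j$ commute with the differential \emph{on the nose}, not merely up to homotopy; the higher homotopies are only needed for the mutual commutativity of the $\bar s_j$ and for well-definedness. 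Your closing remark that regularity of $Q$ is not used, only regularity of the sequence $f_1,\dots,f_r$, is correct and matches the generality the paper needs in Section~\ref{almost_art}.
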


\begin{proof}
Part (1) is  \cite{Ei}, Proposition 1.6. Part (2) and (3) follows
from dualizing Proposition 2.3 in \cite{Av1}, which gave the
corresponding result for $\Ext$. Specifically, one should tensor the
exact sequence (2.5.1) there with $N$. One can also deduce (2) and
(3) from the detailed comparison of different constructions of
cohomology operators done in \cite{AS}, Section 4.

\end{proof}

In the following lemma we shall write $T^R(M,N)$ or sometimes $T^R$
for $\Tor_*^R(M,N)$.
\begin{lem}\label{AA}
Let $R,Q,S,M,N$ as in Proposition \ref{Eiops}. Assume that
$f_R(T^R(M,N))<\infty $. Then $T^R(M,N)$ is almost Artinian over $S$
if and only if $ T^Q(M,N)$ is almost Artinian over $Q$ (which is
equivalent to $\Tor_i^Q(M,N)=0$ for $i \gg 0$).
\end{lem}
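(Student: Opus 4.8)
The plan is to induct on the number $r$ of defining equations, reducing to the case $r=1$ where the long exact sequence of Proposition \ref{longexact} is available, and then to run an inductive step using the compatibility of the Eisenbud operators established in part (3) of Proposition \ref{Eiops}. First I would handle $r=1$, so $R = Q/(f)$ with $f$ a nonzerodivisor on $Q$ and $S = R[x_1]$ with $\deg x_1 = -2$. The long exact sequence in \ref{longexact} breaks $T^Q(M,N) = \Tor_*^Q(M,N)$ into the kernel and cokernel of the operator $x_1 : T^R_{i+2} \to T^R_i$; more precisely, for each $i$ there is an exact sequence relating $\coker(x_1)$ in degree $i$, $\Tor_i^Q(M,N)$, and $\ker(x_1)$ in a nearby degree, with the connecting map being $\pm x_1$ by part (2) of \ref{Eiops}. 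So the key observation is that $\Tor_i^Q(M,N) = 0$ for $i \gg 0$ is equivalent to $x_1 : T^R_{i+2} \to T^R_i$ being an isomorphism for $i \gg 0$. I would then argue: if $T^Q(M,N)$ is almost Artinian over $Q$ — equivalently (using that $Q$ has no $x$-variables, so ``almost Artinian over $Q$'' just means each $\Tor_i^Q$ has finite length and they vanish eventually, via \ref{kirby} or directly) — then $x_1$ is eventually surjective with kernel of finite length; since $\ker x_1$ has finite length in each degree it is Artinian over $S$ once we truncate (its a finite-length module concentrated in finitely many degrees, plus possibly more — here one uses that $x_1$ eventually iso forces $\ker x_1$ to live in boundedly many degrees), so by Gulliksen's lemma \ref{gulik} applied to the truncation $T^R_{\geq N}$, it is Artinian over $S = R[x_1]$, hence $T^R(M,N)$ is almost Artinian. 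Conversely, if $T^R(M,N)$ is almost Artinian over $S$, then by Lemma \ref{P1} its Hilbert series is rational with denominator a power of $(1-t^2)$, and by \ref{kirby} part (2) the map $x_1$ is eventually injective with finite-length cokernel, so again the long exact sequence forces $\Tor_i^Q(M,N) = 0$ for $i \gg 0$.

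For the inductive step, suppose the result holds for regular sequences of length $r-1$ and let $R = Q/(f_1,\dots,f_r)$. Set $R' = Q/(f_{s+1},\dots,f_r)$ with $s = 1$, i.e.\ $R' = Q/(f_2,\dots,f_r)$, so that $R = R'/(f_1)$ and $R'$ is presented over $Q$ by the regular sequence $f_2, \dots, f_r$ of length $r-1$. By part (3) of Proposition \ref{Eiops}, the operators $x_2, \dots, x_r$ acting on $T^R(M,N)$ coming from $R = Q/(f_1,\dots,f_r)$ agree with those coming from $R = R'/(f_1) $ composed with the presentation $R' = Q/(f_2,\dots,f_r)$ — more to the point, the $S' = R[x_2,\dots,x_r]$-module structure on $T^{R'}(M,N)$ is compatible, under the change-of-rings maps, with the $S$-structure on $T^R(M,N)$. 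The $r=1$ case applied to $R = R'/(f_1)$ (with $Q$ replaced by $R'$) gives: $f_R(T^R(M,N)) < \infty$ implies $f_{R'}(T^{R'}(M,N)) < \infty$, and $T^R(M,N)$ is almost Artinian over $R[x_1]$ iff $T^{R'}(M,N)$ is almost Artinian over $R'$ — but I actually need the statement over the full ring $S$. So the cleaner route is: $T^R(M,N)$ is almost Artinian over $S = R[x_1,\dots,x_r]$ iff (using \ref{gulik} and \ref{3.9} to peel off $x_1$) $T^R(M,N)$ is almost Artinian over $R[x_2,\dots,x_r]$ and $x_1$ is eventually iso modulo finite-length stuff; the latter, via the $r=1$ long exact sequence, translates into $T^{R'}(M,N)$ being almost Artinian over $R'[x_2,\dots,x_r]$; then apply the inductive hypothesis to $R' = Q/(f_2,\dots,f_r)$ to conclude this is equivalent to $T^Q(M,N)$ almost Artinian over $Q$, i.e.\ $\Tor_i^Q(M,N) = 0$ for $i \gg 0$.

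I expect the main obstacle to be the bookkeeping around almost Artinian (rather than Artinian) modules, precisely the phenomenon in Example \ref{exalmostart}: Gulliksen's lemma \ref{gulik} and the clean dictionary of Theorem \ref{kirby} are about genuinely Artinian modules, so at each stage I must first pass to a truncation $T_{\geq N}$, verify that the relevant kernels/cokernels are actually Artinian (not merely ``almost''), and only then invoke \ref{gulik}. The subtle point is ensuring that when $x_1$ is eventually an isomorphism $T^R_{i+2} \to T^R_i$, the kernel $\ker(x_1) = 0 :_{T^R} x_1$ — which a priori could be spread over all even-or-odd degrees — is in fact concentrated in finitely many degrees (so that, together with finite length in each degree, it is Artinian over $S$), and dually that finite-length cokernel plus \ref{kirby}(2) genuinely forces the $\Tor^Q$ to vanish eventually rather than merely have finite length. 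Here the hypothesis $f_R(T^R(M,N)) < \infty$ is doing real work, and I would lean on Lemma \ref{P1} (rationality of the Hilbert series with denominator a power of $(1-t^2)$) together with the long exact sequence to pin down the eventual behavior. The compatibility in \ref{Eiops}(3) is essential but, once invoked, is not itself the difficulty — the difficulty is that ``almost Artinian'' is not closed under the operations one wants, so every step needs a truncation argument.
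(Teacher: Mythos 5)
Your overall plan---induction on $r$, peeling off one deformation at a time via the change-of-rings long exact sequence \ref{longexact} and Gulliksen's Lemma \ref{gulik}---is the same strategy the paper uses, but the inductive step as you have written it contains a real error. You assert that $T^R(M,N)$ almost Artinian over $S=R[x_1,\dots,x_r]$ is equivalent to ``$T^R(M,N)$ almost Artinian over $R[x_2,\dots,x_r]$ and $x_1$ eventually iso modulo finite-length stuff.'' The forward implication here is false: Artinian over a \emph{subring} of $S$ is a strictly \emph{stronger} condition than Artinian over $S$ (there are more $R[x_2,\dots,x_r]$-submodules than $S$-submodules), and a module on which only $x_1$ acts nontrivially--as in Example \ref{exalmostart} with $x_1$ the shift isomorphism--can be Artinian over $S$ while badly non-Artinian over $R[x_2,\dots,x_r]$. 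What Gulliksen's lemma actually gives is a criterion in terms of $\ker(x_1)$ over $R[x_2,\dots,x_r]$, not in terms of $T^R$ itself; the correct intermediate object is $\ker(x_1)$, which the long exact sequence identifies as a graded quotient of $T^{R'}$.

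That is what the paper does. For ``only if'' it assembles the long exact sequence into $0 \to T^R/x_rT^R \to T^{R'} \to \ker(x_r) \to 0$ (with $R'=Q/(f_1,\dots,f_{r-1})$) and applies Lemma \ref{3.9} plus induction. For ``if'' it truncates at $n$ with $\ell(T_i^R)<\infty$ for $i\ge n$, notes that $\ker(x_r)$ on $T^R_{\ge n}$ decomposes as a graded quotient of $T^{R'}_{\ge n+1}$ (Artinian by the inductive hypothesis on $R'$ together with Corollary \ref{3.4}) plus the two finite-length pieces $T^R_n\oplus T^R_{n+1}$, hence is Artinian over $S$, and then invokes Gulliksen to conclude $T^R_{\ge n}$ is Artinian over $S$. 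No separate $r=1$ case is needed: the induction starts at the vacuous $r=0$, so your detour through eventual isomorphism of $x_1$, Kirby's criterion, and the Hilbert series of \ref{P1} is unnecessary machinery. I would also note that in your $r=1$ argument for ``$T^R$ almost Artinian $\Rightarrow\Tor^Q_i=0$ for $i\gg0$'' you only extract that $x_1$ has finite-length cokernel, which is weaker than what the long exact sequence requires; you need cokernel eventually zero, which the rationality of the Hilbert series plus eventual injectivity does supply, but the step should be made explicit rather than gestured at.
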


\begin{proof}
Assume $T^R$ is almost Artinian over $S$. Let $R'= Q/(f_1,...,f_{r-1})$
and consider the long exact sequence :

\[ \xymatrix {\ar[r] & T_{i+2}^R  \ar[r]^{d_{i+2}} &  T_i^R  \ar[r] & T_{i+1}^{R'} \ar[r] & T_{i+1}^R  \ar[r]^{d_{i+1}} & } \]

Note that the connecting maps $d_i$s are just multiplication by
$x_r$ (up to sign) in $T^R$. Break the long exact sequence into
short exact sequences and assemble them together to get an exact
sequence:
$$ 0 \to T^R/x_rT^R \to T^{R'} \to \ker(x_r) \to 0 $$
The result follows from lemma \ref{3.9} and induction on $r$ (the
case $r=0$ is vacuous).

Now we prove the ``if" part. Here a more careful argument is needed.
This is because lemma \ref{gulik} fails badly for almost Artinian
modules (see \ref{exalmostart}). We still use induction. By
assumption there is an integer $n$ such that $T_{\geq n}$ is
Artinian. Truncate the long exact sequence for $\Tor$ above at
$T_{n+1}^R$. It is obvious that $\ell(T_i^{R'})<\infty$ for $i\geq
n+1$. By the induction hypotheses, $T^{R'}$ is almost Artinian,
hence so is $T^{R'}_{i\geq {n+1}}$. Consider the module $K =
\oplus_{i\geq {n+2}} \ker(d_i)$. Then $K$ is a graded quotient of
$T^{R'}_{i\geq {n+1}}$, therefore it is Artinian over $S$.
Restricting to $T^R_{i\geq{n}}$, $\ker(x_r) = K\oplus T^R_{n+1}
\oplus T^R_n$. Since the last two summmands are Artinian $R$-modules
and $K$ is an Artinian $S$- module, $\ker(x_r)$ is Artinian
$S$-module. It follows from lemma \ref{gulik} that $T^R_{i\geq{n}}$
is Artinian $S$-module, finishing our proof.

\end{proof}

\begin{cor}
If $\ell(M\tensor_R N)<\infty$, then $T^R(M,N)$ is Artinian over $S$
if and only if  $T^Q(M,N)$ is Artinian over $Q$ (if and only if
$\Tor_i^Q(M,N) = 0$ for $i \gg 0$).
\end{cor}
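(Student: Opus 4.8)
The statement to prove is the Corollary: if $\ell(M\tensor_R N)<\infty$, then $T^R(M,N)$ is Artinian over $S$ if and only if $T^Q(M,N)$ is Artinian over $Q$ (equivalently $\Tor_i^Q(M,N)=0$ for $i\gg0$). The natural approach is to \emph{bootstrap} this from Lemma~\ref{AA}, which is the almost-Artinian version of the same statement, by checking that the hypothesis $\ell(M\tensor_R N)<\infty$ upgrades ``almost Artinian'' to ``Artinian'' on both sides. So the first observation is that $\ell(M\tensor_R N)<\infty$ implies $\ell(\Tor_i^R(M,N))<\infty$ for \emph{all} $i\geq 0$, not merely for $i\gg0$: indeed $\Tor_i^R(M,N)$ is a subquotient of $F_i\tensor_R N$ where $F_\bullet$ is a free resolution of $M$, and it is supported on $\Supp M\cap\Supp N=\Supp(M\tensor_R N)=\{m\}$, hence has finite length. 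In particular $f_R(T^R(M,N))=0$, so Lemma~\ref{AA} applies and gives the equivalence at the level of ``almost Artinian''.

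It remains to see why, under this stronger hypothesis, almost Artinian forces Artinian. For $T^R(M,N)$ over $S=R[x_1,\dots,x_r]$ with all $x_i$ of degree $-2$: every graded piece already has finite length over $R$ (hence is Artinian over $R$), so by Theorem~\ref{kirby} the only thing that can fail is condition (2), $0:_{T_i}(\sum x_jR)=0$ for $i\gg0$ — condition (1) is automatic since $T$ is concentrated in degrees $\geq 0$. But ``almost Artinian'' means $T_{\geq j}$ is Artinian for some $j$, and applying Theorem~\ref{kirby} to that truncation gives precisely $0:_{T_i}(\sum x_jR)=0$ for $i$ large; since this is the only condition at issue, $T^R(M,N)$ itself is Artinian over $S$. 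On the $Q$ side the argument is even more transparent: ``$T^Q(M,N)$ almost Artinian over $Q$'' (with $Q=S$ in degree-$0$, no operators) just says $\Tor_i^Q(M,N)$ is Artinian-over-$Q$ for $i\geq j$; but again $\ell(\Tor_i^Q(M,N))<\infty$ for all $i$ by the same support argument (now over $Q$, using $\Supp_Q M\cap\Supp_Q N=\{m_Q\}$, which holds since $M\tensor_Q N=M\tensor_R N$ still has finite length), so in fact $\Tor_i^Q(M,N)=0$ for $i\gg0$ forces $\Tor_*^Q(M,N)$ to have finite total length, which is the same as Artinian over $Q$. Thus on both sides ``almost Artinian'' and ``Artinian'' coincide under the hypothesis, and the Corollary follows immediately from Lemma~\ref{AA}.

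**Where the work is.** The only genuinely non-formal point is the finite-length claim $\ell(\Tor_i^R(M,N))<\infty$ for all $i$ from $\ell(M\tensor_R N)<\infty$, and its analogue over $Q$; everything else is a direct translation through Theorem~\ref{kirby}, Corollary~\ref{3.4} (which already records exactly this ``almost Artinian $\Leftrightarrow$ Artinian'' phenomenon when all pieces have finite length), and Lemma~\ref{AA}. I expect the proof in the paper is a single short paragraph invoking Lemma~\ref{AA} together with the finite-length observation, possibly citing Corollary~\ref{3.4} to handle the passage on each side in one stroke. The main ``obstacle'' is really just being careful that the condition $\ell(M\tensor_R N)<\infty$ is the right strengthening — it propagates to all $\Tor_i$ in both the $R$- and $Q$-resolutions — rather than any deep argument.
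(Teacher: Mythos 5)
Your proposal is correct and takes exactly the route the paper intends (the Corollary is stated without proof, immediately following Lemma~\ref{AA}): note that $\ell(M\otimes_R N)<\infty$ gives $\ell(\Tor_i^R(M,N))<\infty$ and $\ell(\Tor_i^Q(M,N))<\infty$ for all $i$, so that Corollary~\ref{3.4} upgrades ``almost Artinian'' to ``Artinian'' on both sides, and Lemma~\ref{AA} then yields the claimed equivalences. One small imprecision worth noting: on the $Q$-side, ``almost Artinian over $Q$'' does not merely say that each $\Tor_i^Q(M,N)$ is Artinian for $i\ge j$ --- since there are no negative-degree operators, an infinite direct sum of nonzero pieces is never Artinian, so it forces $\Tor_i^Q(M,N)=0$ for $i\gg0$ (as the parenthetical in Lemma~\ref{AA} records); but you recover this correctly in the next clause, so the argument stands.
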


\section{Asymptotic behavior of lengths of $\Tor^R(M,N)$ and the function $\h^R(M,N)$
}\label{eta}

In this section we shall study the behavior of the $\lcx$ function
(see Section \ref{prelim}) over complete intersections.  We will
only be concerned with $R$-modules $M,N$ such that $f_R(M,N)=
f_R(\Tor_*^R(M,N))<\infty$, that is, $\ell(\Tor_i^R(M,N))<\infty$
for $i\gg 0$. For such pairs of modules, we define the
\textit{generalized Betti numbers} as :
$$  \beta_i^R(M,N) = a\ell(\Tor_i^R(M,N)) $$
Obviously, when $N=k$ we have the usual Betti numbers for $M$. The
following result shows that well-known properties for Betti numbers
still hold(cf. \cite{Av1}, 9.2.1):
\begin{thm}\label{beta1}
Let $R,Q$ be local rings such that $R = Q/(f_1,..,f_r)$ and
$f_1,...,f_r$ is a regular sequence on $Q$. Let $M,N$ be $R$ modules
such that $\Tor_i^Q(M,N)=0$ for $i\gg0$ (which is automatic if $Q$
is regular) and $\ell(\Tor_i^R(M,N))<\infty$ for $i\gg 0$. Let:
$$ P_{M,N}^R(t) = \sum_{i=0}^{\infty}\beta_i^R(M,N)t^i$$
We have:
\begin{enumerate}
\item
There is a polynomial $p(t) \in \mathbb{Z}[t]$ with $p(\pm 1)\neq 0$, such that
$$ P_{M,N}^R(t)  = \frac{p(t)}{(1-t)^c(1+t)^d}$$
\item
For $i\gg 0$, there are equalities:
$$ \beta_i(M,N) = \frac{m_0}{(c-1)!} i^{c-1} + (-1)^i\frac{n_0}{(d-1)!} i^{d-1}+ g_{(-1)^i}(i) $$
with $m_0 \geq 0$ and polynomials $g_{\pm}(t)\in \mathbb{Q}[t]$ of degrees $< \max\{c,d\}-1$.
\item
$d \leq c = \lcx_R(M,N) \leq r$.
\end{enumerate}

\end{thm}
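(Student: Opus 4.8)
The plan is to build the rational expression for $P_{M,N}^R(t)$ from the homology operator structure, then read off the asymptotics by a partial fraction argument, and finally pin down the two exponents $c$ and $d$. First I would invoke Lemma~\ref{AA}: since $\Tor_i^Q(M,N)=0$ for $i\gg0$, the module $T^R(M,N) = \Tor_*^R(M,N)$ is almost Artinian over $S = R[x_1,\dots,x_r]$, with each $x_i$ acting in degree $-2$. Applying Lemma~\ref{P1} (with all $d_i = 2$) gives immediately
$$ P_{M,N}^R(t) = \frac{q(t)}{(1-t^2)^r} $$
for some $q(t)\in\ZZ[t]$. Writing $1-t^2 = (1-t)(1+t)$ and cancelling common factors of $(1-t)$ and $(1+t)$ from numerator and denominator, we arrive at part~(1): $P_{M,N}^R(t) = p(t)/\bigl((1-t)^c(1+t)^d\bigr)$ with $p(\pm1)\neq 0$ and $c,d \leq r$.

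Part~(2) is then a routine partial-fraction computation, which I would only sketch. Decompose $p(t)/\bigl((1-t)^c(1+t)^d\bigr)$ into simple fractions of the forms $A_j/(1-t)^j$ for $1\le j\le c$ and $B_j/(1+t)^j$ for $1\le j\le d$, plus a polynomial part. The expansion of $1/(1-t)^j$ contributes a polynomial in $i$ of degree $j-1$ with positive leading coefficient, while $1/(1+t)^j$ contributes $(-1)^i$ times such a polynomial; the leading term from the $(1-t)^c$ part is $\frac{m_0}{(c-1)!}i^{c-1}$ and from the $(1+t)^d$ part is $(-1)^i\frac{n_0}{(d-1)!}i^{d-1}$, with $m_0, n_0$ the appropriate residues (up to normalization). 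Since $\beta_i(M,N) = a\ell(\Tor_i^R(M,N)) \geq 0$ for all $i$, and the $(-1)^i$-term oscillates, averaging over two consecutive values of $i$ forces $m_0 \geq 0$; this is where nonnegativity of adjusted lengths is used. Collecting the lower-order contributions into $g_{\pm}(i)$ of degree $< \max\{c,d\}-1$ finishes~(2).

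For part~(3), the inequality $c \leq r$ is already in~(1). The equality $c = \lcx_R(M,N)$ follows by comparing growth rates: by definition $\lcx_R(M,N) = \cx_R(\Tor_*^R(M,N)) = \cx(\{\beta_i(M,N)\})$, and from the asymptotic formula in~(2) the sequence $\beta_i(M,N)$ grows like a polynomial of degree exactly $\max\{c,d\}-1$ \emph{along even indices} (where the two leading terms add) but one must check it does not collapse; since $m_0 \geq 0$ and $p(1)\neq 0$ forces $m_0 > 0$, the even-index subsequence has growth degree $c-1$ when $c \geq d$, hence complexity $c$. The remaining point, $d \leq c$, is the one I expect to be the main obstacle: a priori the $(1+t)$-pole could have higher order than the $(1-t)$-pole. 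I would handle this by a Cesàro/summation argument — the partial sums $\sum_{i=0}^n \beta_i(M,N)$ are nondecreasing (partial sums of nonnegative terms), and if $d > c$ the alternating $(1+t)^d$ contribution would make consecutive partial sums differ by a quantity of order $n^{d-1}$ with oscillating sign dominating the $n^{c-1}$ term, eventually forcing some $\beta_i < 0$, a contradiction. Equivalently, one can note that $\sum \beta_i t^i$ evaluated via Abel summation at $t \to 1^-$ must have the pole order at $t=1$ dominate, since all coefficients are nonnegative (a Pringsheim-type fact), giving $d \le c$ directly. This nonnegativity-of-coefficients input is the crux and the step I would write out in full detail.
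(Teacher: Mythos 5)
Your proposal follows the paper's argument essentially step for step: invoke Lemma~\ref{AA} to get the almost Artinian structure of $\Tor_*^R(M,N)$ over $S=R[x_1,\dots,x_r]$, apply Lemma~\ref{P1} to obtain $P^R_{M,N}(t)=q(t)/(1-t^2)^r$, cancel factors of $(1\pm t)$ to reach the reduced form in~(1), expand by partial fractions for~(2), and use nonnegativity of $\beta_i$ (together with $p(\pm1)\neq0$ guaranteeing nonvanishing leading coefficients) to force $m_0\ge0$ and $d\le c$ in~(3). The ``Cesàro / Pringsheim'' framing you offer for $d\le c$ unwinds to exactly the paper's observation — that if $d>c$ the oscillating $(-1)^i n_0 i^{d-1}/(d-1)!$ term would dominate and make $\beta_i$ negative for one parity of $i$ — so this is the same proof.
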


\begin{proof}
\begin{enumerate}
\item
This is due to the fact that $\Tor_*^R(M,N)$ is an almost Artinian graded $R[x_1,...,x_r]$-module,
with the $x_i$s having degree $-2$, and \ref{P1}.
\item From part (1), we can write :
$$\sum_i \beta_it^i = \frac{p(t)}{(1-t)^c(1+t)^d} =\sum_{l=0}^{c-1}{\frac{m_l}{(1-t)^{c-l}}} +
                                         \sum_{l=0}^{d-1}{\frac{n_l}{(1+t)^{d-l}}} + q(t)  $$
Here $q(t) \in \mathbb{Z}[t]$.
Then by comparing coefficients from both sides we get the desired formula for $\beta_i$.
Since $\beta_i \geq 0$, $m_0$ must be greater than or equal to $0$.
\item
That $c \leq r$ is obvious. Since the sign of $\beta_i$ for odd $i$
is positive only if $c \geq d$, the first inequality is also clear.
The size of $\beta_i$ behaves like a polynomial of degree
$\max\{c,d\}-1= c-1$, which shows that $\lcx_R(M,N) = c$.
\end{enumerate}
\end{proof}

The behavior of $\beta_i(M,N)$ allows us to define an asymptotic
function on $M,N$ as follows:
\begin{dfn}
For a pair of $R$-modules $M,N$ such that $f_R(M,N)<\infty$, and an integer $e \geq \lcx_R(M,N)$,
let :
$$\h_e^R(M,N):= \lim_{n\to\infty} \frac{\sum_0^{n} (-1)^i \beta_i(M,N)}{n^e}$$
\end{dfn}

The following result shows that out $\h$ function behaves quite well:
\begin{thm}\label{beta2}
Let $R,Q$ be local rings such that $R = Q/(f_1,..,f_r)$ and
$f_1,...,f_r$ is a regular sequence on $Q$. Let $M,N$ be $R$ modules
such that $\Tor_i^Q(M,N)=0$ for $i\gg0$ and
$\ell(\Tor_i^R(M,N))<\infty$ for $i\gg 0$. Let $c = \lcx_R(M,N)$ and
$e \geq c$ be an integer.
\begin{enumerate}
\item  $\h_e^R(M,N)$ is finite, and if $e > c$, $\h_e^R(M,N) = 0$
\item (Biadditivity)  Suppose that $e \geq 1$. Let $0 \to M_1 \to M_2 \to M_3 \to 0$ and
$0 \to N_1 \to N_2 \to N_3 \to 0$ be exact sequences and suppose that for each $i$,
$f_R(M_i,N)$ and $f_R(M,N_i)$ are finite. If $e \geq \max_i\{\lcx(M_i,N)\}$ then:
$$ \h_e^R(M_2,N) = \h_e^R(M_1,N) + \h_e^R(M_3,N)$$
and if $e \geq \max_i\{\lcx(M,N_i)\}$, then:
$$ \h_e^R(M,N_2) = \h_e^R(M,N_1) + \h_e^R(M,N_3)$$
If $\ell(M\tensor_RN)<\infty$, the conclusion also holds for $e=0$.
\item (Change of rings) Let $r\geq 1$ and $R'=Q/(f_1,...,f_{r-1})$.
Note that we also have $\ell(\Tor_i^R(M,N))<\infty$ for $i\gg 0$.
Assume that  $e\ge 2$ and $e-1 \geq \lcx_{R'}(M,N)$. Then we have :
$$ \h_e^{R}(M,N) = \frac{1}{2e} \h_{e-1}^{R'}(M,N)$$

If $\ell(M\tensor_RN)<\infty$, $r=1$ and $e=1$ we have:
$$ \h_1^{R}(M,N) = \frac{1}{2}\h_{0}^{R'}(M,N)$$

\end{enumerate}
\end{thm}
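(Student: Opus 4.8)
The plan is to extract everything from the rational-function shape of $P_{M,N}^R(t)$ established in Theorem \ref{beta1}, together with the long exact sequence \ref{longexact} and basic additivity of lengths. For part (1): write $P_{M,N}^R(t) = p(t)/((1-t)^c(1+t)^d)$ with $p(\pm1)\ne 0$ and $d\le c$. The key observation is that the alternating sum $\sum_{i=0}^n (-1)^i\beta_i$ is the sequence of partial sums of the coefficients of $P_{M,N}^R(-t)$, which has generating function $P_{M,N}^R(-t)/(1-t) = p(-t)/((1+t)^c(1-t)^{d+1})$; so its growth is governed by the pole at $t=1$, of order $d+1$. Hence the partial sums grow like a polynomial of degree $d$ in $n$, with leading coefficient a nonzero multiple of $p(1)/2^c$ exactly when $d = e$... wait, more carefully: dividing by $n^e$ gives a finite limit precisely when $e \ge d$, and the limit is $0$ when $e > d$, in particular when $e > c \ge d$. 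When $e = c$ the limit is $0$ unless $c = d$. I would record the explicit leading term so that it is available for part (3): if $e = c = d$ the limit equals (a sign times) $p(1)/(2^c\, c!)$ — I would reconcile signs against the formula in \ref{beta1}(2) for $\beta_i$. This step is routine partial-fraction bookkeeping.

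For part (2) (biadditivity): given $0\to M_1\to M_2\to M_3\to 0$, the long exact sequence of $\Tor^R(-,N)$ shows that for each $i$, $\beta_i^R(M_2,N)$ and $\beta_i^R(M_1,N)+\beta_i^R(M_3,N)$ differ by an error term bounded (in length, once lengths are finite) by a fixed constant times $\beta_{i+1}$ of the surrounding modules; more precisely, breaking the long exact sequence into short exact sequences $0\to Z_i\to \Tor_i(M_1,N)\to \Tor_i(M_2,N)\to \Tor_i(M_3,N)\to Z_{i-1}\to 0$ and using additivity of $a\ell$ on short exact sequences of finite-length modules, one gets
$$\sum_{i=0}^n (-1)^i\big(\beta_i^R(M_2,N)-\beta_i^R(M_1,N)-\beta_i^R(M_3,N)\big) = \pm a\ell(Z_n) \mp a\ell(Z_{-1})$$
up to finitely many low-degree correction terms coming from the spots where lengths are not yet finite. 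The point is that $a\ell(Z_n)$ is bounded by $\beta_{n+1}^R(M_3,N)$, which by Theorem \ref{beta1}(3) grows like $n^{\lcx(M_3,N)-1} \le n^{e-1} = o(n^e)$; dividing by $n^e$ and letting $n\to\infty$ kills the error. For $e = 0$ with $\ell(M\tensor_R N)<\infty$ the Tor modules all have finite length, the error $a\ell(Z_n)$ is eventually $0$ (the $\lcx \le 0$ case, where Betti numbers are eventually zero is not forced, but the truncation argument of \ref{3.4}/\ref{AA} shows boundedness suffices and in fact $Z_n$ stabilizes), and we recover the classical additivity of $\h_0 = \chi$. The symmetric statement in the $N$-variable is identical.

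For part (3) (change of rings): apply the long exact sequence of \ref{longexact} for $R = R'/(f_r)$, which after reindexing gives, for $i\gg0$ (so all lengths are finite), short exact sequences assembling into
$$\cdots \to \Tor_{i+1}^{R'}(M,N)\to \Tor_{i+1}^R(M,N) \xrightarrow{x_r} \Tor_{i-1}^R(M,N)\to \Tor_i^{R'}(M,N)\to\cdots$$
Taking the alternating sum of $a\ell$ over a range, the $R'$-terms telescope against themselves and one obtains a relation of the form $\beta_i^{R'}(M,N) = \beta_{i+1}^R(M,N) - \beta_{i-1}^R(M,N) + \beta_i^R(M,N) - \beta_{i-1}^R(M,N) + \cdots$ — concretely, summing the exact sequence shows that $\sum_{i=0}^{n}(-1)^i\beta_i^{R'}(M,N)$ equals, up to a bounded error, the \emph{second difference} data of $\sum(-1)^i\beta_i^R$; equivalently, in generating-function language, $P^{R'}_{M,N}(-t)$ and $(1-t^2)P^{R}_{M,N}(-t)$ agree up to a polynomial. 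Then $\h_{e-1}^{R'}$ picks out the coefficient of the degree-$(e-1)$ term of $\sum_{i\le n}(-1)^i\beta_i^{R'}$, which by the generating-function relation is $(e-1+1) = e$ times... — precisely, if $\sum_{i\le n}(-1)^i\beta_i^R \sim a n^e/e!$ then the relation forces $\sum_{i\le n}(-1)^i\beta_i^{R'} \sim 2a\, n^{e-1}/(e-1)!$, giving $\h_{e-1}^{R'} = 2e\,\h_e^R$, i.e. $\h_e^R = \frac{1}{2e}\h_{e-1}^{R'}$. The low-dimensional case $r=1$, $e=1$, $\ell(M\tensor_RN)<\infty$ is the same computation with $\h_0 = \chi$ and no truncation needed.

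\medskip

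\noindent\emph{Main obstacle.} The genuinely delicate point is part (3): getting the constant $\frac{1}{2e}$ exactly right, including signs, from the long exact sequence. The long exact sequence of \ref{longexact} relates $\Tor^R$ and $\Tor^Q$ with a shift by $2$ in homological degree coming from the degree-$(-2)$ operator $x_r$, and one must carefully track how the alternating sum of generalized Betti numbers transforms under "dividing the generating function by $1-t^2$" versus "multiplying by $1-t^2$", which reverses the roles of $\lcx$; the hypothesis $e-1\ge \lcx_{R'}(M,N)$ is exactly what guarantees the error terms are $o(n^{e-1})$ after dividing. I would double-check the normalization by testing it against the hypersurface case $r=1$, $e=1$, where $\h_1^R$ should reduce to Hochster's $\theta^R$ and $\h_0^{R'} = \chi^{R'}$ over the regular ring $R' = Q$, recovering the known identity $\theta^R(M,N) = \tfrac12\chi^Q(M,N)$ (when applicable); matching that pins down every constant.
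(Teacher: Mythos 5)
Your proposal follows essentially the same route as the paper: express the partial alternating sums via the rational shape of $P^R_{M,N}(t)$ from Theorem \ref{beta1}, use the long exact sequences (\ref{longexact} and the $\Tor$ sequence) to compare different partial sums, and show the boundary error terms are negligible after dividing by $n^e$ because of the $\lcx$ hypotheses. Two caveats are worth noting. First, in part (3) the generating-function identity ``$P^{R'}_{M,N}(-t)$ and $(1-t^2)P^{R}_{M,N}(-t)$ agree up to a polynomial'' is not literally true: from the long exact sequence one has, for $n\gg 0$,
$$\ell(\Tor_n^{R'}(M,N)) = \ell(\Tor_{n-1}^R(M,N)) - \ell(\Tor_{n+1}^R(M,N)) + \ell(\ker x_r|_n) + \ell(\ker x_r|_{n+1}),$$
and the kernel terms do not vanish for large $n$, so the discrepancy is a genuine power series, not a polynomial. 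What is true — and what both you and the paper actually use — is that $\ker(x_r)|_{n+1}$ is a quotient of $\Tor_{n+1}^{R'}(M,N)$, hence has length $O(n^{\lcx_{R'}(M,N)-1})=O(n^{e-2})$, which dies after dividing by $n^{e-1}$. So the heuristic statement should be deleted but your subsequent reasoning is sound. Second, your closing sanity check is off by a factor of $2$ in two compensating places: by Remark \ref{SHG} one has $\theta^R(M,N)=2\h_1^R(M,N)$, not $\h_1^R=\theta^R$, and the known identity for an admissible hypersurface $R=Q/(f)$ with $\ell(M\tensor_R N)<\infty$ is $\theta^R(M,N)=\chi^Q(M,N)$, not $\tfrac12\chi^Q(M,N)$. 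Combining them correctly gives $\h_1^R=\tfrac12\theta^R=\tfrac12\chi^Q=\tfrac12\h_0^{R'}$, which does confirm the constant $\tfrac1{2e}$ at $e=1$, so your final formula is right even though the check as written would have ``verified'' an extra factor of $2$.
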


\begin{proof}

Let $n>h$ be integers.  Let $g_{M,N}^R(h,n) = \sum_{i=h}^{n} (-1)^i \beta_i^R(M,N)) $. Then, for
a fixed $h$, it is easy to see that:
$$ \h_{e}^R(M,N) = \lim_{n\to\infty} \frac{g^R_{M,N}(h,n)}{n^e}$$

(1)
If $e=0$ then $\lcx_R(M,N)=0$, so $\beta_i =0$ for $i\gg0$, and there is nothing to prove.
Assume $e>0$. We choose an integer $h$ is big enough so that the formula for $\beta_i(M,N)$ in
\ref{beta1} is true for all $i \ge h$. Hence :

\begin{eqnarray}
g^R_{M,N}(h,n) & = &\sum_{i=h}^{n} (-1)^i \beta_i \nonumber \\
       & = &\frac{m_0}{(c-1)!}\sum_{h}^{n}(-1)^i i^{c-1}
                     + \frac{n_0}{(d-1)!}\sum_{h}^{n}i^{d-1}
                     +\sum_{h}^{n}(-1)^i g_{(-1)^i}(i) \nonumber
\end{eqnarray}
Since the first and third terms are of order $n^{c-1}$ or less, and $e\geq d$, we have:
\begin{eqnarray}
\lim_{n\to\infty} \frac{g^R_{M,N}(h,n)}{n^r} & =
& \lim_{n\to\infty} \frac{n_0}{(d-1)!}\frac{\sum_{h}^{n}i^{d-1}}{n^e} \nonumber\\
& = & \lim_{n\to\infty} \frac{n_0}{d!}n^{d-e}\nonumber
\end{eqnarray}
Since $d \leq c \leq e$, the limit is finite. The second statement
is also obvious.

(2)
It is enough to prove the first equation, since the second one follows in an identical
manner. The short exact sequence $0 \to M_1 \to M_2 \to M_3 \to 0$ gives rise
to the long exact sequence :
$$ ... \to \Tor_i(M_1,N) \to \Tor_i(M_2,N) \to \Tor_i(M_3,N) \to \Tor_{i-1}(M_1,N) \to...$$
which we truncate at $n > h > \max\{f_R(M_1,N),f_R(M_2,N),f_R(M_3,N)\}$ to get :

$ 0 \to B_n \to \Tor_n(M_1,N) \to \Tor_n(M_2,N) \to \Tor_n(M_3,N) \to ...\to \Tor_h(M_3,N) \\
\to C_h \to  0$
Taking the alternate sum of length we get:
$$ g_{M_1,N}(h,n) - g_{M_2,N}(h,n) + g_{M_3,N}(h,n) = \pm \ell(B_n) \pm \ell(C_h) $$
Divide by $n^e$ and take the limit as $n\to\infty$. Since $h$ is
fixed and $\ell(B_n)\leq \ell(\Tor_n(M_1,N)$, which is of order
$n^{\lcx(M_1,N)-1}$, the right hand side must be $0$. The left hand
side gives exactly the equality we seek.

(3) We will make use of the long exact sequence for change of
rings in \ref{longexact}:

$$ ... \to \Tor^R_{i-1}(M,N) \to \Tor^{R'}_i(M,N) \to \Tor_i^R(M,N) \to \Tor^R_{i-2}(M,N) \to...$$
Again, choose  $n> h> f_R(M,N)$ and we truncate as follows :
$$        \begin{array}{ll}

                          0 \to B_n  \to \Tor^R_{n+1}(M,N) \\
\to \Tor_{n-1}^R(M,N) \to \Tor_{n}^{R'}(M,N) \to \Tor_{n}^R(M,N)\\
\to  ...                                                          \\
\to \Tor_{h}^R(M,N) \to \Tor_{h+1}^{R'}(M,N) \to \Tor_{h+1}^R(M,N) \to C_h \to 0
\end{array}
$$\\
Take the alternate sum of lengths to get:
$$ g_{M,N}^{R'}(0,n) = (-1)^{n+1}\ell(\Tor^R_{n+1}(M,N)) + (-1)^n \ell(\Tor^R_{n}(M,N)) \pm \ell(B_n) \pm \ell(C_h) $$
Now we make use of Theorem \ref{beta1}, observing that since $B_n$
is a quotient of $\Tor^{R'}_{n+1}(M,N)$, $\ell(B_n)$ is of order
$n^{e-2}$ or less. Thus:
$$ g_{M,N}^{R'}(h,n) = \frac{n_0}{(d-1)!}(n^{d-1}+(n+1)^{d-1}) + f(n)$$
where $f(n)$ is of order $n^{e-2}$. Divide by $n^{e-1}$ and take
limit as $n\to\infty$, we get :
\begin{eqnarray}
\h^{R'}_{e-1}(M,N) & = &2d \lim_{n\to\infty} \frac{n_0}{d!}n^{d-e}\nonumber \\
                        & = &2e \lim_{n\to\infty} \frac{n_0}{d!}n^{d-e}\nonumber \\
                        & = &2e \h_{e}^R(M,N)\nonumber
\end{eqnarray}

The second equality follows because the limit is nonzero if and only
if $d=e$. The third equality follows from the last line of part (1).
The last statement of (3) can be proved in a similar (but simpler)
manner, and in any case can be found in \cite{Ho1}.
\end{proof}

\begin{rmk}\label{SHG}
We want to note that $\h_e^R(M,N)$ can be viewed as a natural
extension of some familiar notions. When $R$ is regular and
$\ell(M\tensor_RN)<\infty$, $\h_0^R(M,N)=\chi^R(M,N)$, Serre's
intersection multiplicity. When $R=Q/(f)$ is a hypersurface, then
$2\h_1^R(M,N)= \theta^R(M,N)$, the function defined by Hochster in
\cite{Ho1} and studied in details in \cite{Da1} and \cite{Da2}. When
$R$ is a complete intersection of codimension $r$ and
$\ell(M\tensor_RN)<\infty$ then $\h_r^R(M,N)$ agrees (up to a
constant factor) with a notion defined by Gulliksen in \cite{Gu}. We
recall Gulliksen's definition. When $R$ is a complete intersection
of codimension $r$ and $\ell(M\tensor_RN)<\infty$ then
$\Tor_*^R(M,N)$ is Artinian over $R[x_1,...,x_r]$ and we can write:
$$ P_{M,N}^R(t)  = \frac{p(t)}{(1-t^2)^r}$$
Then Gulliksen defined:
$$\chi^R(M,N) = p(-1) $$
Since  almost Artinian modules behave less well than Artinian ones
(see proof of \ref{AA}), and in any case finer information are
needed for our main results, we could not repeat Gulliksen's idea.
\end{rmk}

\section{Comparison of complexities}\label{comparison}

This section is devoted to the study of relationships between the
different notions of complexities over complete intersections (which
we defined in section \ref{prelim}). We again let $(R,\mathrm{m},k)$
and $(Q,\mathrm{n},k)$ be local rings such that $R =
Q/(f_1,..,f_r)$, $Q$ is regular and $(f_1,...,f_r)$ is a regular
sequence on $Q$. We denote by $x_i$, $i=1,...,r$ the cohomology
operators and let $S = R[x_1,...,x_r]$. Let $M,N$ be $R$-modules.

Throughout this section, let $\mathcal T=\Tor_*^R(M,N)$. The main
result states that if $f_R(M,N)=f_R(\mathcal T)<\infty$ then:
$$ \lcx_R(M,N) = \tcx_R(M,N) = \cx_R(M,N) $$

We first note an easy:

\begin{lem}\label{lem5.1}
Suppose there is an $m_R$-primary ideal $I$ such that $I$ kills
$\mathcal T_{\geq n}$ for some integer $n$. Then $\lcx_R(M,N) = \tcx_R(M,N)$.
\end{lem}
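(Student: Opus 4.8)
The plan is to compare the defining complexities directly. Recall that $\lcx_R(M,N)=\cx_R(\mathcal T)=\cx(\{\al_R(\mathcal T_i)\})$ while $\tcx_R(M,N)=\cx_R(\mathcal T\tensor_R k)=\cx(\{\ell_R(\mathcal T_i\tensor_R k)\})$, and note that $\mu_R(\mathcal T_i)=\ell_R(\mathcal T_i\tensor_R k)$. Since the complexity of a sequence of integers only depends on the sequence for $i\gg 0$, and since by hypothesis $\mathcal T_{\geq n}$ is killed by an $m_R$-primary ideal $I$, it suffices to compare $\al_R(\mathcal T_i)$ and $\mu_R(\mathcal T_i)$ for $i\geq n$. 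For such $i$ we have $\dim \mathcal T_i=0$, so $\al_R(\mathcal T_i)=\ell_R(\mathcal T_i)$, and $I\cdot \mathcal T_i=0$ with $I$ being $m_R$-primary, so Lemma \ref{length1} applies to give
$$\mu_R(\mathcal T_i)\,\ell_R(R/I)\ \geq\ \ell_R(\mathcal T_i)\ \geq\ \mu_R(\mathcal T_i)$$
for all $i\geq n$.

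Now I would feed these inequalities into the definition of complexity of a sequence. From the right inequality, $\mu_R(\mathcal T_i)\leq \ell_R(\mathcal T_i)=\al_R(\mathcal T_i)$, so any polynomial bound $\al_R(\mathcal T_i)\leq a\,i^{d-1}$ for $i\gg 0$ also bounds $\mu_R(\mathcal T_i)$, giving $\tcx_R(M,N)\leq \lcx_R(M,N)$. From the left inequality, $\al_R(\mathcal T_i)=\ell_R(\mathcal T_i)\leq \ell_R(R/I)\,\mu_R(\mathcal T_i)$ for $i\geq n$; since $\ell_R(R/I)$ is a fixed constant, a polynomial bound $\mu_R(\mathcal T_i)\leq a\,i^{d-1}$ for $i\gg 0$ yields $\al_R(\mathcal T_i)\leq \big(a\,\ell_R(R/I)\big)\,i^{d-1}$, hence $\lcx_R(M,N)\leq \tcx_R(M,N)$. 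Combining the two gives the asserted equality.

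There is no serious obstacle here: the only thing to be slightly careful about is the edge cases. If $\mathcal T_i=0$ for $i\gg 0$ then both complexities are $0$ and there is nothing to prove; otherwise the inequalities above hold on a cofinal set of $i$, which is exactly what the definition of $\cx$ needs. One should also observe that Lemma \ref{length1} is stated for a finitely generated module killed by an $m$-primary ideal, and each $\mathcal T_i=\Tor_i^R(M,N)$ is indeed finitely generated since $M,N$ are, so the hypotheses are met. This is why the statement is flagged as "easy": it is purely a matter of sandwiching length between $\mu$ and a constant multiple of $\mu$.
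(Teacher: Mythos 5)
Your proof is correct and uses essentially the same argument as the paper: apply Lemma~\ref{length1} with $I$ to sandwich $\ell_R(\mathcal T_i)$ between $\mu_R(\mathcal T_i)$ and $\ell_R(R/I)\,\mu_R(\mathcal T_i)$ for $i\geq n$, then observe that such a two-sided bound by a fixed constant multiple forces the two complexities to coincide. You spell out the identification $\mu_R(\mathcal T_i)=\ell_R(\mathcal T_i\tensor_R k)$ and the edge cases a bit more explicitly, but the approach is identical.
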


\begin{proof}
Suppose $i$ is an integer such that $I$ kills $\Tor_i^R(M,N)$. Let
$\mu_i = \ell(\Tor_i^R(M,N))$ and $a = \ell(R/I)$. Then by
\ref{length1} : $a\mu_i \geq \beta_i \geq \mu_i$. Since this is true
for all $i\gg 0$, we have $\cx(\{\mu_i\}) = \cx(\{\beta_i\})$, which
is what we want.
\end{proof}

Next, we recall some notations and results from Avramov and
Buchweitz's paper \cite{AB1}. Let $\widetilde{k}$ denotes an
algebraic closure of $k$. A residual algebraic closure of $Q$ is a
flat extension of local ring $Q \subseteq \widetilde Q$ such that
$\mathrm{n}\widetilde Q$ is the maximal ideal
$\widetilde{\mathrm{n}}$ of $\widetilde Q$ and the induced map $k
\to \widetilde Q/\widetilde {\mathrm{n}}$ is the embedding $ k
\hookrightarrow \widetilde k$. Such extension always exists (see
\cite{AB1}, 2.1).

Let $\mathcal E = \Ext_R^*(M,N)\tensor_R k$, a module over
$A=S\tensor_R k = k[x_1,...x_r]$. Then the  support variety
$V^*(Q,R,M,N)$ was defined as  the zero set (in $\widetilde k^r$)
of the annihilator of $\mathcal E$ in $A$, plus ${0}$. The key
result here shows why this is an important object:

\begin{thm}(\cite{AB1},2.4,2.5]\label{AB} Let $Q,R,M,N$ as above. Then :\\
(1) $\cx^R(M,N) = \dim V^*(Q,R,M,N)$\\
(2) Let $0 \neq \overline a=(\overline a_1,...,\overline a_r)\in
\widetilde{k}^r$.  Then $\overline a\in V^*(Q,R,M,N)$ if and only
if $\Ext_{Q_a}^n(\widetilde M, \widetilde N)\neq 0$ for infinitely
many $n$. Here $\widetilde M = M\tensor_R \widetilde Q$,
$a=(a_1,...,a_r)$ is a lift of $\overline a$ in $\widetilde{Q}^r$,
$f_a = \sum a_if_i$ and $Q_a = \widetilde{Q}/(f_a)$.

\end{thm}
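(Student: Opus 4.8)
The plan is to establish parts (1) and (2) separately, in both cases reducing matters to Gulliksen's finiteness theorem together with elementary facts about Hilbert functions and about finite morphisms of affine cones.

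For (1): since $Q$ is regular, $\Ext_Q^n(M,N)=0$ for $n>\dim Q$, so by Gulliksen's theorem (\cite{Gu}, \cite{AGP}) the module $\Ext_R^*(M,N)$ is Noetherian over $S=R[x_1,\ldots,x_r]$; tensoring with $k$ makes $\mathcal E=\Ext_R^*(M,N)\tensor_R k$ a finitely generated graded module over $A=k[x_1,\ldots,x_r]$ with $\deg x_i=2$. Splitting $\mathcal E$ into its even and odd graded pieces and rescaling the grading, each becomes a finitely generated graded module over a standard graded polynomial ring, so $n\mapsto\dim_k\mathcal E_n$ agrees for $n\gg 0$ with a polynomial of degree $\dim_A\mathcal E-1$; by the definitions of Section \ref{prelim} this is exactly the statement $\cx_R(M,N)=\dim_A\mathcal E=\dim A/\ann_A(\mathcal E)$. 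Since Krull dimension is unaffected by the base extension $k\hookrightarrow\widetilde k$, and adjoining the point $0$ (which already lies in the zero set of the homogeneous ideal $\ann_A(\mathcal E)$ whenever $\mathcal E\neq 0$) changes nothing, this equals $\dim V^*(Q,R,M,N)$.

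For (2), I would first reduce to the residually algebraically closed case: replacing $(Q,M,N)$ by $(\widetilde Q,\widetilde M,\widetilde N)$ is a flat base change, so it leaves $\mathcal E$ unchanged up to $\tensor_k\widetilde k$, hence leaves $\ann_A(\mathcal E)$ and $V^*$ unchanged, and it commutes with forming $\Ext_{Q_a}^*$; thus we may assume $k=\widetilde k$ and $a=\overline a\in k^r$ with $a\neq 0$. Because an invertible linear change of the regular sequence $f_1,\ldots,f_r$ transforms the operators $x_1,\ldots,x_r$ by the corresponding linear substitution, and $V^*$ is a cone, we may arrange $a=(1,0,\ldots,0)$, so that $f_a=f_1$, $Q_a=Q/(f_1)$, and $R=Q_a/(f_2,\ldots,f_r)$; by Proposition \ref{Eiops}(3) the operators $x_2,\ldots,x_r$ attached to this last presentation are the original ones, so $\Ext_R^*(M,N)$ is a graded module over $R[x_2,\ldots,x_r]$.

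The heart of the matter, and the step I expect to be the main obstacle, is the $\Ext$-counterpart of Lemma \ref{AA}: $\Ext_R^*(M,N)$ is Noetherian over $R[x_2,\ldots,x_r]$ if and only if $\Ext_{Q_a}^n(M,N)=0$ for $n\gg 0$. I would prove this by induction on the number of generators $f_2,\ldots,f_r$, using the change-of-rings long exact sequence for $\Ext$ dual to \ref{longexact}: for a one-step extension $R=R'/(f_j)$, with $R'$ the intermediate quotient, it yields a short exact sequence of graded modules relating $\Ext_{R'}^*(M,N)$ to $\coker(x_j)$ and $\ker(x_j)$ on $\Ext_R^*(M,N)$, and the passage back up is the Noetherian analogue of Lemma \ref{gulik} (if $T/xT$ is Noetherian over $S$ then $T$ is Noetherian over $S[x]$), which falls out of Theorem \ref{kirby}, parts (4)--(6). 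Here no finite-length hypothesis is needed, which is why this runs more smoothly than \ref{AA}; the delicate points are pinning down the operators of the intermediate presentation $R=Q_a/(f_2,\ldots,f_r)$ via Proposition \ref{Eiops}(3), tracking the grading shifts in the induction, and exploiting throughout that $\Ext_R^*(M,N)$ is a priori finitely generated over the full ring $R[x_1,\ldots,x_r]$ so that the Noetherian hypotheses needed at each step are genuinely available.

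It then remains to translate Noetherianity over $R[x_2,\ldots,x_r]$ into the support-variety condition, which is routine. Tensoring with $k$, plus a graded Nakayama argument for the converse (legitimate because $\Ext_R^*(M,N)$ is already finitely generated over $R[x_1,\ldots,x_r]$), shows $\Ext_R^*(M,N)$ is Noetherian over $R[x_2,\ldots,x_r]$ iff $\mathcal E$ is module-finite over $B=k[x_2,\ldots,x_r]$, iff $A/\ann_A(\mathcal E)$ is module-finite over $B$, iff the projection $\Spec(A/\ann_A(\mathcal E))\to\A^{r-1}$ forgetting the first coordinate is a finite morphism. Since $V^*=V(\ann_A(\mathcal E))\cup\{0\}$ is a cone, that morphism is finite exactly when its fiber over $0$ is $\{0\}$, i.e. when the $x_1$-axis $k\cdot(1,0,\ldots,0)$ meets $V^*$ only at the origin, i.e. when $a\notin V^*$ (the Nullstellensatz being available since $k=\widetilde k$). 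Chaining the equivalences gives $a\in V^*$ if and only if $\Ext_{Q_a}^n(\widetilde M,\widetilde N)\neq 0$ for infinitely many $n$, which is (2).
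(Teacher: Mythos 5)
The paper does not prove this result; it is quoted from Avramov--Buchweitz \cite{AB1}, 2.4--2.5, and the citation is the only ``proof'' present, so there is no in-paper argument to compare yours against. That said, your proposal is a sound reconstruction of the Avramov--Buchweitz argument (whose ingredients trace back to Gulliksen, \cite{Av1}, and \cite{AGP}). For (1), Gulliksen's finiteness theorem makes $\mathcal E=\Ext^*_R(M,N)\tensor_R k$ a finite graded $A$-module, and the complexity is read off the Hilbert series exactly as you say. For (2), after reducing to $k=\widetilde k$ and, via a linear change of the $f_i$ (which transforms the operators linearly), to $a=(1,0,\dots,0)$, the question becomes whether $\Ext^*_R(M,N)$ is Noetherian over $R[x_2,\dots,x_r]$; by Proposition \ref{Eiops}(3) these are the operators of the presentation $R=Q_a/(f_2,\dots,f_r)$, so the step you label ``the heart of the matter'' is not a new lemma at all but Gulliksen's theorem applied once more to that presentation, and you could simply cite it rather than re-derive it by induction. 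The translation to the geometry of the cone $V^*$ is correct; the one place that deserves an extra sentence is the graded Nakayama step descending from $\mathcal E$ module-finite over $B=k[x_2,\dots,x_r]$ to Noetherianity of $\Ext^*_R(M,N)$ over $R[x_2,\dots,x_r]$: this works because each graded piece $\Ext^i_R(M,N)$ is a finitely generated $R$-module, so ordinary Nakayama over the local ring $R$ applies degree by degree, after which Kirby's criterion \ref{kirby}(4)--(6) closes the argument. You gesture at this, but it is the one spot where the a priori Noetherianity over $R[x_1,\dots,x_r]$ is genuinely used rather than merely mentioned.
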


Now, suppose $n=f_R(\mathcal T))<\infty$. Then $\mathcal T$ is
almost Artinian, hence  $\mathcal T_{\geq n}$ is an Artinian module
over $S$. Let $T_{\geq n}^{\vee}$ denote $\oplus_{i\geq
n}\Hom_R(\mathcal T_i, E_{R}(k)) = \Hom_{graded R-modules}(\mathcal
T, E_{R}(k))$ . Then $\mathcal T_{\geq n}^{\vee}$ becomes a graded
module over  $S'= R[x_1^{\vee},...,x_r^{\vee}]$ . Let $\mathcal
D=\mathcal T_{\geq n}^{\vee}\tensor_R k$, a module over $A'=
k[x_1^{\vee},...,x_r^{\vee}]$. Since $\mathcal T_{\geq n}$ is
Artinian over $S$, $\mathcal D$ is Noetherian over $A'$ by
\ref{dual}.

From the discussion above  we can define a set :
$$ V_*(Q,R,M,N) = \Z(\ann_{A'}\mathcal D) \cup \{0\} \subseteq \widetilde{k}^r.$$
The next result is a dual version of theorem \ref{AB}:

\begin{lem}\label{AB1}
Let $Q,R,M,N$ be as above such that $f_R(\mathcal T)<\infty$ (i.e.
$\ell(\Tor_i^R(M,N))<\infty$ for $i\gg0$). Then :
\\ (1) $\tcx^R(M,N) = \dim V_*(Q,R,M,N)$.
\\ (2) Let $0 \neq \overline a=(\overline a_1,...,\overline a_r)\in
\widetilde{k}^r$.  Then $\overline a\in V_*(Q,R,M,N)$ if and only
if $\Tor^{Q_a}_n(\widetilde M, \widetilde N)\neq 0$ for infinitely
many $n$. Here $\widetilde M = M\tensor_R \widetilde Q$,
$a=(a_1,...,a_r)$ is a lift of $\overline a$ in $\widetilde{Q}^r$,
$f_a = \sum a_if_i$ and $Q_a = \widetilde{Q}/(f_a)$.

\end{lem}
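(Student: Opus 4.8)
The plan is to mirror the proof of Theorem~\ref{AB} from \cite{AB1}, transporting each statement about $\Ext$ to the dual statement about $\Tor$ via Matlis duality. For part (1), recall that $\mathcal D = \mathcal T_{\geq n}^{\vee}\tensor_R k$ is Noetherian over $A' = k[x_1^{\vee},\dots,x_r^{\vee}]$ by Lemma~\ref{dual}. By Theorem~\ref{kirby} applied to the Noetherian case (conditions (4)--(6) there), the Hilbert function of $\mathcal D$ grows like a polynomial of degree $\dim V_*(Q,R,M,N) - 1$, since $\dim V_*(Q,R,M,N) = \dim A'/\ann_{A'}\mathcal D$ is the Krull dimension of the support of $\mathcal D$. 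On the other hand, Matlis duality over $k$ is just $k$-linear duality on each graded piece, so $\dim_k (\mathcal D_i) = \dim_k((\mathcal T_{\geq n}^{\vee})_i \tensor_R k)$, and a minimal generator count shows this agrees, up to the ideal $I = \ann_R(\mathcal T_{\geq n})$ being $m_R$-primary, with $\mu(\mathcal T_i)$; but I should instead argue directly that $\dim_k \mathcal D_i = \dim_k(\mathcal T_i \tensor_R k)$ is not what is needed --- rather, one wants the length. The cleaner route: first reduce to the case where all $\mathcal T_i$ for $i \geq n$ are killed by an $m_R$-primary ideal, which holds here by Corollary~\ref{cor1} applied to the Noetherian module $\mathcal D$ over $A'$ (the annihilators stabilize), and then invoke Lemma~\ref{lem5.1} so that $\lcx_R(M,N) = \tcx_R(M,N)$; hence it suffices to compute $\tcx_R(M,N) = \cx_R(\mathcal T \tensor_R k)$, which equals $\cx(\{\dim_k \mathcal D_i\})$ by Matlis self-duality of $k$, and this in turn is $\dim V_*(Q,R,M,N)$ by the dimension theory of Noetherian graded modules over $A'$ (Theorem~\ref{kirby}, conditions (4)--(6)).

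For part (2), fix $0 \neq \overline a \in \widetilde k^r$ with lift $a \in \widetilde Q^r$, and set $f_a = \sum a_i f_i$, $Q_a = \widetilde Q/(f_a)$, $\widetilde M = M \tensor_R \widetilde Q$, $\widetilde N = N \tensor_R \widetilde Q$. I would first pass to the residual algebraic closure $\widetilde Q$, noting that this base change is flat and local, so $\Tor_i^R(M,N) \tensor_R \widetilde Q \cong \Tor_i^{\widetilde R}(\widetilde M, \widetilde N)$ where $\widetilde R = \widetilde Q/(f_1,\dots,f_r)$, the cohomology operators are compatible with this base change, and finite-length conditions are preserved; thus $V_*$ is unchanged. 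Now $\overline a \in V_*(Q,R,M,N) = \Z(\ann_{A'}\mathcal D)\cup\{0\}$ means, since $\overline a \neq 0$, that $\overline a$ lies in the support variety of the Noetherian $A'$-module $\mathcal D$; equivalently, localizing $\mathcal D$ at the homogeneous prime corresponding to the line through $\overline a$ is nonzero, equivalently (by graded Nakayama and the structure theory) the ``generic hyperplane section'' argument of Eisenbud (Lemma~\ref{eisen}) applied after a linear change of variables sending $x^{\vee}$ to a coordinate aligned with $\overline a$ shows that multiplication by the corresponding operator $x_{\overline a}^{\vee} = \sum \overline a_i x_i^{\vee}$ on $\mathcal D$ is \emph{not} eventually surjective onto a cofinite-dimensional piece --- dually, multiplication by $x_{\overline a} = \sum \overline a_i x_i$ on $\mathcal T_{\geq n}$ fails to be eventually injective. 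By Proposition~\ref{Eiops}(2)--(3), this operator $x_{\overline a}$ is, up to sign, the connecting homomorphism in the long exact sequence for the change of rings $\widetilde R = \widetilde R'/(f_a)$ where $\widetilde R' = \widetilde Q/(\text{a complement of } f_a)$; more precisely, after the linear change of variables in $f_1,\dots,f_r$ corresponding to $\overline a$, we may assume $f_a$ is one member of the regular sequence. Then Lemma~\ref{longexact} gives
$$\cdots \to \Tor_{i}^{\widetilde R}(\widetilde M,\widetilde N) \xrightarrow{\ x_{\overline a}\ } \Tor_{i-2}^{\widetilde R}(\widetilde M,\widetilde N) \to \Tor_{i-1}^{Q_a'}(\widetilde M,\widetilde N) \to \cdots$$
no --- I must instead split into the two reductions: first reduce $r$ variables to the one operator $x_{\overline a}$ via Lemma~\ref{AA}-style arguments, landing over the hypersurface $Q_a$, and conclude that $\overline a \in V_*$ iff $\Tor_*^{\widetilde R}(\widetilde M,\widetilde N)$ is not almost Artinian over the single-variable ring $\widetilde R[x_{\overline a}]$ iff, by Lemma~\ref{AA} with $Q \rightsquigarrow Q_a$ and $R \rightsquigarrow \widetilde R$, $\Tor_i^{Q_a}(\widetilde M,\widetilde N) \neq 0$ for infinitely many $i$.

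The main obstacle, as always in this circle of ideas, will be part (2): making precise that membership in the support variety $V_*$ (a condition about non-annihilation of the graded module $\mathcal D$ by a linear form) translates exactly into \emph{non}-almost-Artinian-ness of $\mathcal T$ over the corresponding one-dimensional polynomial subring, and then feeding this into Lemma~\ref{AA} to detect infinitely many nonvanishing $\Tor^{Q_a}$. The subtlety is that Lemma~\ref{AA} is proved for the full ring $Q$ with all $r$ operators, so I need the version where we use only the \emph{partial} presentation from Proposition~\ref{Eiops}(3) --- replacing $R = Q/(f_1,\dots,f_r)$ by $R = R'/(f_a)$ with $R' = \widetilde Q/(\text{complementary sequence})$ --- and check that the finite-length hypothesis $f_R(\mathcal T) < \infty$ still supplies what is needed (it does, since the operators only lower homological degree and the finite-length locus is an open condition preserved under the flat base change to $\widetilde Q$). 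One also has to handle the degenerate directions where $x_{\overline a}$ acts trivially, but those land in $\{0\}$ anyway, consistent with the definition of $V_*$. Apart from this translation, the remaining steps --- flat base change compatibility of Eisenbud operators, the dimension count via Theorem~\ref{kirby}, and the reduction via Lemma~\ref{lem5.1} --- are routine, and part (1) will follow once part (2) identifies the support of $\mathcal D$ with a union of lines whose number, counted with dimension, is $\tcx_R(M,N)$.
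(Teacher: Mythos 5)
Your part (1) follows essentially the same route as the paper: use Corollary~\ref{cor1} to stabilize the annihilators of $\mathcal T_i^{\vee}$ (hence of $\mathcal T_i$), invoke Lemma~\ref{lem5.1} to get $\lcx_R(M,N)=\tcx_R(M,N)$, and then identify this with $\dim V_*(Q,R,M,N)$ via the dimension theory of the Noetherian $A'$-module $\mathcal D$. The phrase ``Matlis self-duality of $k$'' is imprecise --- $\dim_k(\mathcal T_i\tensor_R k)=\mu(\mathcal T_i)$ and $\dim_k\mathcal D_i=\mu(\mathcal T_i^{\vee})$ are not equal in general --- but once a single $m_R$-primary ideal kills every $\mathcal T_i$ for $i\gg0$, both are sandwiched against $\ell(\mathcal T_i)$ by Lemma~\ref{length1}, so they have the same complexity; this is what the chain of equalities in the paper's proof relies on.

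Part (2) has a genuine gap, in two places that reinforce each other. After normalizing to $\overline a=(1,0,\dots,0)$ and $Q=\widetilde Q$, you propose to read ``$\overline a\in V_*$'' as a statement about the single operator $x_{\overline a}$, working over the one-variable ring $\widetilde R[x_{\overline a}]$ and applying Lemma~\ref{AA} with the partial presentation $\widetilde R=R'/(f_a)$, $R'=\widetilde Q/(f_2,\dots,f_r)$. First, this is not the right translation of the support condition: $\overline a\notin V_*=\Z(\ann_{A'}\mathcal D)\cup\{0\}$ means the line through $\overline a$ is outside $\Supp(\mathcal D)$, which by graded Nakayama is equivalent to $\mathcal D$ being a \emph{finite module over the complementary} polynomial ring $A'_1=k[x_2^{\vee},\dots,x_r^{\vee}]$; it is not equivalent to $x_1^{\vee}$ acting nilpotently or to any condition phrased in terms of $x_1^{\vee}$ alone (e.g.\ $\mathcal D=k[x_1^{\vee},x_2^{\vee}]/(x_1^{\vee}-x_2^{\vee})$ has $(1,0)\notin \Z(\ann\mathcal D)$ yet $x_1^{\vee}$ is not nilpotent). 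Second, even granting that translation, Lemma~\ref{AA} applied to the presentation $\widetilde R=R'/(f_a)$ would detect $\Tor_i^{R'}(\widetilde M,\widetilde N)=0$ for $i\gg0$ --- a statement about the codimension-$(r-1)$ \emph{complementary} complete intersection $R'$, not about the hypersurface $Q_a=\widetilde Q/(f_1)$ that the lemma actually concerns. The correct argument, which the paper gives, uses the other partial presentation $\widetilde R=Q_a/(\bar f_2,\dots,\bar f_r)$ with ring of homology operators $S_1=\widetilde R[x_2,\dots,x_r]$ ($r-1$ variables); then Lemma~\ref{dual} gives $\mathcal D$ finite over $A'_1$ iff $\mathcal T$ almost Artinian over $S_1$, and Lemma~\ref{AA} gives the latter iff $\Tor_i^{Q_a}(\widetilde M,\widetilde N)=0$ for $i\gg0$.
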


\begin{proof}
(1) Let $n = f_R(\mathcal T)$, then $\mathcal T_{\geq n}$ is
Artinian. Since $\mathcal T_{\geq n}^{\vee}$ is Noetherian, by
\ref{cor1} the annihilators of $\mathcal T_i^{\vee}$ become
periodic of period 2 eventually. So the same thing is true for the
annihilators of $\mathcal T_i$. Lemma \ref{lem5.1} shows that $
\lcx_R(M,N) = \tcx_R(M,N)$. We now have:
$$\tcx_R(M,N) = \lcx_R(M,N) = \cx_R(\mathcal T)= \cx_R(\mathcal T_{\geq n}^{\vee})
= \cx_R(\mathcal D) =\dim(V_*^R(M,N))$$

(2) Without loss of generality, we may assume $\overline a=
(1,0,..,0)$ and $Q=\widetilde Q$. Then $f_a=f_1$. Let $Q_1 =
Q/(f_1)$. Then $R = Q_1/(f_2,..,f_r)$ and so $\mathcal T$ is a
module over $S_1 = R[x_2,...,x_r]$ (the actions here agree  with the
actions from $S$). By Proposition \ref{dual}, $\Tor_*^{Q_1}(M,N)$ is
almost Artinian over $Q_1$ $\iff$ $\mathcal T$ is almost Artinian
over $S$
 $\iff$  $\mathcal D$ is a finite module over
$A'_1 = k[x_2^{\vee},...,x_r^{\vee}]$ $\iff$ $(1,0,...,0) \notin
V_*(Q,R,M,N)$
\end{proof}

In summary:

\begin{thm}\label{compare}
Let $R$ be a local complete intersection and $M,N$ be $R$ modules
such that $f_R(\mathcal T)<\infty$. Then $\lcx_R(M,N) =
\tcx_R(M,N) = \cx_R(M,N)$.
\end{thm}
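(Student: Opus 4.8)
The plan is to reduce the statement to a combination of the results already assembled in this section. First I would dispose of the equality $\lcx_R(M,N) = \tcx_R(M,N)$. This is exactly where Lemma~\ref{lem5.1} applies: one needs an $m_R$-primary ideal killing $\mathcal T_{\geq n}$ for some $n$. To produce it, pass to the Matlis dual side. Since $f_R(\mathcal T) = n < \infty$, the truncation $\mathcal T_{\geq n}$ is Artinian over $S$, so by Lemma~\ref{dual} the dual $\mathcal T_{\geq n}^{\vee}$ is Noetherian over $S' = R[x_1^{\vee},\dots,x_r^{\vee}]$ (a polynomial ring in variables of positive degree $2$). Then Corollary~\ref{cor1} says the annihilator ideals $\ann_R(\mathcal T_i^{\vee})$ are eventually periodic of period $2$; in particular their common intersection over any two consecutive large indices — call it $I$ — is a fixed ideal killing $\mathcal T_{\geq n'}^{\vee}$ for $n'$ large, and since annihilator of a module and of its Matlis dual coincide, $I$ kills $\mathcal T_{\geq n'}$ as well. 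To see $I$ is $m_R$-primary, note that for $i \gg 0$ the module $\mathcal T_i = \Tor_i^R(M,N)$ has finite length, so $\ann_R(\mathcal T_i)$ is $m_R$-primary; taking a finite intersection keeps it $m_R$-primary. Now Lemma~\ref{lem5.1} gives $\lcx_R(M,N) = \tcx_R(M,N)$.

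The remaining equality $\tcx_R(M,N) = \cx_R(M,N)$ I would obtain by comparing support varieties. On the $\Ext$ side, Theorem~\ref{AB}(1) gives $\cx_R(M,N) = \dim V^*(Q,R,M,N)$, and on the $\Tor$ side, Lemma~\ref{AB1}(1) gives $\tcx_R(M,N) = \dim V_*(Q,R,M,N)$. So it suffices to show the two varieties coincide (as subsets of $\widetilde k^{\,r}$), for which it is enough by the point-criteria to check that for each nonzero $\overline a \in \widetilde k^{\,r}$, one has $\overline a \in V^*(Q,R,M,N)$ if and only if $\overline a \in V_*(Q,R,M,N)$. By Theorem~\ref{AB}(2) the left condition is equivalent to $\Ext^n_{Q_a}(\widetilde M, \widetilde N) \neq 0$ for infinitely many $n$, and by Lemma~\ref{AB1}(2) the right condition is equivalent to $\Tor_n^{Q_a}(\widetilde M, \widetilde N) \neq 0$ for infinitely many $n$, where $Q_a = \widetilde Q/(f_a)$ is a hypersurface. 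Over a hypersurface these two conditions are well known to be equivalent: by Eisenbud's structure theory (\cite{Ei}), the complexes computing $\Tor$ and $\Ext$ are both eventually periodic of period two, and the cokernel of a sufficiently high syzygy is a maximal Cohen--Macaulay module whose minimal free resolution is periodic in both directions, so the vanishing of all high $\Tor^{Q_a}$ is equivalent to the vanishing of all high $\Ext_{Q_a}$. (Alternatively, one can invoke Theorem~\ref{AA} together with the corresponding $\Ext$-statement of Avramov--Gasharov--Peeva to reduce both to $\Tor_i^{\widetilde Q}(\widetilde M,\widetilde N) = 0$ for $i \gg 0$.) Hence $V^*(Q,R,M,N) = V_*(Q,R,M,N)$, their dimensions agree, and $\cx_R(M,N) = \tcx_R(M,N)$.

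Finally I would assemble the chain
$$ \lcx_R(M,N) = \tcx_R(M,N) = \cx_R(M,N), $$
noting that the hypothesis of Theorem~\ref{compare} is precisely $f_R(\mathcal T) < \infty$, which is exactly what is needed to invoke both Lemma~\ref{AB1} and the dualization argument above; the passage to a residual algebraic closure $\widetilde Q$ does not affect any of the three complexities (flat base change) nor the support varieties, so there is no loss of generality in the reductions made inside Lemma~\ref{AB1}(2).

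I expect the main obstacle to be the hypersurface input in the second paragraph — establishing that over $Q_a$ the infinite non-vanishing of $\Tor$ is equivalent to that of $\Ext$. Everything else is bookkeeping with the lemmas already proved, but this equivalence, while classical, must be cited or sketched carefully, and one must be slightly attentive to the distinction between "all high $\Tor$ vanish" and "$\Tor_n \neq 0$ for infinitely many $n$" (they agree here because of the period-two periodicity, but that periodicity is exactly the fact being used). A secondary subtlety is confirming that the support-variety point-criteria of Theorem~\ref{AB}(2) and Lemma~\ref{AB1}(2) really do detect the variety on the nose (i.e. that the variety is the closure of its nonzero points together with $0$), which is built into their statements but worth a remark.
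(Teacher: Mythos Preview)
Your proposal is correct and follows essentially the same route as the paper. The paper's proof simply cites Lemma~\ref{AB1}(1) for the equality $\lcx_R(M,N)=\tcx_R(M,N)$ (whose proof is exactly the Matlis-dual/periodic-annihilator argument you spell out), and then obtains $\tcx_R(M,N)=\cx_R(M,N)$ by matching the pointwise criteria of Theorem~\ref{AB}(2) and Lemma~\ref{AB1}(2) together with the hypersurface fact that nonvanishing of infinitely many $\Tor^{Q_a}$ is equivalent to nonvanishing of infinitely many $\Ext_{Q_a}$, for which it cites \cite{AB1},~5.12 and \cite{HW1},~1.9.
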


\begin{proof}
The first equality was proved in part (1) of Lemma \ref{AB1}. The
second equality follows part (2) of  \ref{AB}, part (2) of \ref{AB1}
and the fact that over the hypersurface $Q_a$,
$\Tor^{Q_a}_n(\widetilde M, \widetilde N)\neq 0$ for infinitely many
$n$ if and only if $\Ext_{Q_a}^n(\widetilde M, \widetilde N)\neq 0$
for infinitely many $n$ (see \cite{AB1}, 5.12 and \cite{HW1}, 1.9).

\end{proof}

As a corollary we reprove a result by Avramov and Buchweitz, which
says that the vanishing of all higher $\Tor$ modules is equivalent
to the vanishing of all higher $\Ext$ modules.

\begin{cor}(\cite{AB1}, 6.1)\label{AB2}
 Let $R$ be a local complete intersection and $M,N$ be $R$-modules. Then $\Ext_R^i(M,N) = 0$ for $i \gg0$ if and only if
$\Tor_i^R(M,N) = 0$ for $i \gg0$.
\end{cor}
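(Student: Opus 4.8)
The plan is to read the Corollary off Theorem \ref{compare}. Recall that $\cx_R(M,N)=\cx_R(\Ext^*_R(M,N)\tensor_R k)$ vanishes if and only if $\Ext^i_R(M,N)\tensor_R k=0$ for $i\gg 0$, which, since each $\Ext^i_R(M,N)$ is finitely generated, is equivalent by Nakayama to $\Ext^i_R(M,N)=0$ for $i\gg 0$; similarly $\tcx_R(M,N)=\cx_R(\Tor_*^R(M,N)\tensor_R k)=0$ if and only if $\Tor_i^R(M,N)=0$ for $i\gg 0$. Hence, whenever $f_R(\mathcal T)<\infty$ holds, Theorem \ref{compare} gives $\cx_R(M,N)=\tcx_R(M,N)$ and the two vanishing statements coincide. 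In particular, if $\Tor_i^R(M,N)=0$ for $i\gg 0$ then trivially $f_R(\mathcal T)<\infty$, so $\tcx_R(M,N)=0$, so $\cx_R(M,N)=0$, so $\Ext^i_R(M,N)=0$ for $i\gg 0$. This disposes of one implication for free.

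For the reverse implication the only thing to prove is the claim: \emph{if $\Ext^i_R(M,N)=0$ for $i\gg 0$ then $f_R(\mathcal T)<\infty$}, i.e.\ $\Tor_i^R(M,N)$ has finite length for all large $i$; granting this, Theorem \ref{compare} again finishes the argument. I would prove the claim by Noetherian induction on $\dim R$. The case $\dim R=0$ is vacuous. For $\dim R\ge 1$, fix a prime $\mathfrak p\ne\mathfrak m$; localizing preserves the complete intersection property, and since a complete intersection is Cohen--Macaulay we get $\dim R_\mathfrak p=\dim R-\dim R/\mathfrak p<\dim R$. As $\Ext^i_R(M,N)_\mathfrak p=\Ext^i_{R_\mathfrak p}(M_\mathfrak p,N_\mathfrak p)$ vanishes for $i\gg 0$, the inductive hypothesis (the full statement of the Corollary over $R_\mathfrak p$) yields $\Tor_i^{R_\mathfrak p}(M_\mathfrak p,N_\mathfrak p)=0$ for $i\gg 0$; that is, every non-maximal prime eventually leaves $\Supp\Tor_i^R(M,N)$.

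The main obstacle is to make the bound ``$i\gg 0$'' in the previous sentence uniform in $\mathfrak p$, so as to conclude that $\Supp\Tor_i^R(M,N)\subseteq\{\mathfrak m\}$ for all large $i$ — a priori the sets $\bigcup_{i\ge n}\Supp\Tor_i^R(M,N)$ need not even be closed. I would overcome this by switching to the support-variety picture built in Section \ref{comparison}, which is robust because it trades the uniformity question for a dimension count on algebraic cones. Passing to a residual algebraic closure $\widetilde Q$ of $Q$ (so $k=\widetilde k$), the hypothesis $\Ext^i_R(M,N)=0$ for $i\gg 0$ becomes $\dim V^*(Q,R,M,N)=\cx_R(M,N)=0$ by Theorem \ref{AB}, forcing the cone $V^*(Q,R,M,N)$ to be $\{0\}$; then Theorem \ref{AB}(2) together with the hypersurface equivalence quoted in the proof of \ref{compare} gives $\Tor^{Q_a}_n(\widetilde M,\widetilde N)=0$ for $n\gg 0$ for every $0\ne\overline a$. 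Feeding this, together with the finiteness of $f_R(\mathcal T)$ away from $\mathfrak m$ furnished by the induction, into Lemma \ref{AB1}(2) pins down $V_*(Q,R,M,N)=\{0\}$, whence $\tcx_R(M,N)=\dim V_*(Q,R,M,N)=0$ and $\Tor_i^R(M,N)=0$ for $i\gg 0$. I expect the genuinely delicate point to be exactly the chicken-and-egg between the two finiteness notions involved — getting $f_R(\mathcal T)<\infty$ honestly from ``finite length at every non-maximal prime'' — which may alternatively be organized around the eventual $2$-periodicity of the ideals $\Ann_R\Tor_i^R(M,N)$, hence of $\Supp\Tor_i^R(M,N)$, that follows from the almost-Artinian structure of Section \ref{almost_art} via Matlis duality and Corollary \ref{cor1}.
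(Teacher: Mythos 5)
Your proposal follows the same route as the paper: reduce to the equality $\cx_R(M,N)=\tcx_R(M,N)$ of Theorem \ref{compare}, and induct on $\dim R$ by localizing at non-maximal primes, the point being to manufacture the hypothesis $f_R(\mathcal T)<\infty$ under which \ref{compare} applies. The implication $\Tor\Rightarrow\Ext$ is handled identically in both. You deserve credit for noticing, and saying explicitly, that the reverse implication requires a \emph{uniform} bound: the inductive hypothesis only gives, for each non-maximal $\mathfrak p$ separately, some $n(\mathfrak p)$ with $\Tor_i^{R_\mathfrak p}(M_\mathfrak p,N_\mathfrak p)=0$ for $i\ge n(\mathfrak p)$, while $f_R(\mathcal T)<\infty$ requires a single $n_0$ working for all $\mathfrak p$. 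The paper's phrase ``we get that all the high $\Tor$ modules have finite length'' silently assumes this uniformity.

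However, the fix you sketch is circular, as you yourself suspect: Lemma \ref{AB1}(2) takes $f_R(\mathcal T)<\infty$ as a hypothesis, so feeding it the ``finiteness away from $\mathfrak m$'' obtained by induction does not yet legitimize its use at $R$ itself; and passing to Matlis duals or invoking Corollary \ref{cor1} on $\Tor_*^R(M,N)$ likewise presupposes the Artinian/Noetherian structure that the finiteness condition would furnish. The gap is real but closes by a much more elementary route, which is presumably what the paper has in mind. If $\Tor_i^{R'}(M,N)=0$ for $i\gg 0$ over a complete intersection $R'=Q'/(f_1,\dots,f_c)$ with $Q'$ regular, then peeling off the $f_j$ one at a time via Lemma \ref{AA} and Corollary \ref{cor6.2} shows that $\Tor_i^{Q'}(M,N)=0$ for $i\gg 0$ (hence for $i>\pd_{Q'}M=\depth Q'-\depth M$), and the resulting eventual $2$-periodicity forces $\Tor_i^{R'}(M,N)=0$ already for $i>\depth R'-\depth M$. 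Applying this at $R'=R_\mathfrak p$ and noting $\depth R_\mathfrak p-\depth M_\mathfrak p\le\dim R_\mathfrak p\le\dim R-1$ for $\mathfrak p\ne\mathfrak m$ gives the uniform bound $n_0=\dim R$, so $\Supp\Tor_i^R(M,N)\subseteq\{\mathfrak m\}$ for all $i\ge\dim R$, i.e.\ $f_R(\mathcal T)<\infty$, and \ref{compare} finishes. With this insertion your argument and the paper's coincide.
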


\begin{proof}
The statement is equivalent to saying that $\cx_R(M,N)=0$ if and
only if $\tcx_R(M,N)=0$. We will use induction on $d = \dim R$. If
$\dim R =0$ then all the  $\Tor$ modules have finite length, so we
can apply \ref{compare} directly. If $d>0$, and $\Tor_i^R(M,N) = 0$
for $i \gg0$, then \ref{compare} also applies. Assume that
$\Ext_R^i(M,N) = 0$ for $i \gg0$.  By localizing at prime ideals in
the punctured spectrum of $R$ and using induction hypothesis, we get
that all the high $\Tor$ modules have finite length, so we can apply
\ref{compare} again.
\end{proof}

\begin{rmk}
It is not known whether $\cx_R(M,N) = \tcx_R(M,N)$ when $R$ is a
local compete intersection and $(M,N)$ is any pair of $R$-modules.
Avramov and Buchweitz's result says that $\cx_R(M,N)=0$ if and only
if $\tcx_R(M,N)=0$. Our result \ref{compare} shows that $\cx_R(M,N)
= \tcx_R(M,N)$ when $f_R(M,N)<\infty$.
\end{rmk}

\begin{cor}\label{AB3}
Let $R$ be a a local complete intersection and $M,N$ be $R$-modules
such that $f_R(M,N)<\infty$. Then $\tcx_R(M,N) \leq
\min\{\cx_RM,\cx_RN\}$.
\end{cor}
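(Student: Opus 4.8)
The plan is to reduce the statement to the already-established identity $\tcx_R(M,N) = \cx_R(M,N)$ from Theorem \ref{compare}, and then bound $\cx_R(M,N)$ by the complexities of the individual modules. Recall $\cx_R(M,N) = \cx_R(\Ext^*_R(M,N)\tensor_R k)$; I would compare this with $\cx_R M = \cx_R(\Ext^*_R(M,k))$ (the usual complexity of $M$, which measures the polynomial growth of its Betti numbers) and similarly $\cx_R N$. So the heart of the matter is the inequality $\cx_R(M,N) \le \min\{\cx_R M, \cx_R N\}$, after which \ref{compare} finishes the job. One subtlety: \ref{compare} requires $f_R(\mathcal T) < \infty$, which is exactly our hypothesis $f_R(M,N)<\infty$, so that application is legitimate.

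For the inequality $\cx_R(M,N)\le \cx_R M$, the cleanest route is via support varieties and Theorem \ref{AB}(1), which gives $\cx_R(M,N) = \dim V^*(Q,R,M,N)$. There is an analogous support variety $V^*(Q,R,M)$ (the case $N=k$, or rather the intrinsic support variety of $M$) with $\cx_R M = \dim V^*(Q,R,M)$, and standard properties of support varieties over complete intersections give $V^*(Q,R,M,N) \subseteq V^*(Q,R,M) \cap V^*(Q,R,N)$. Taking dimensions yields $\cx_R(M,N) \le \min\{\cx_R M, \cx_R N\}$. Alternatively, and perhaps more in the spirit of keeping the argument self-contained within the excerpt, I would use Theorem \ref{AB}(2): for $0\neq \overline a \in \widetilde k^r$, membership $\overline a \in V^*(Q,R,M,N)$ forces $\Ext^n_{Q_a}(\widetilde M,\widetilde N)\neq 0$ for infinitely many $n$, which over the hypersurface $Q_a$ cannot happen unless $\widetilde M$ has infinite projective dimension over $Q_a$, i.e. $\overline a$ lies in the support variety of $\widetilde M$ alone; hence $V^*(Q,R,M,N) \subseteq V^*(Q,R,M)$, and symmetrically for $N$.

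Concretely the steps are: (i) invoke Theorem \ref{compare} to replace $\tcx_R(M,N)$ by $\cx_R(M,N)$; (ii) identify $\cx_R M$ and $\cx_R N$ with the dimensions of the respective support varieties; (iii) establish the containment $V^*(Q,R,M,N) \subseteq V^*(Q,R,M) \cap V^*(Q,R,N)$ using Theorem \ref{AB}(2) together with the fact that over a hypersurface $\Ext^n_{Q_a}(\widetilde M,\widetilde N)$ vanishes for large $n$ as soon as $\widetilde M$ (or $\widetilde N$) has finite projective dimension over $Q_a$; (iv) take dimensions. I expect step (iii) to be the main obstacle, since it requires the vanishing statement over hypersurfaces $\Ext^n_{Q_a}(\widetilde M, \widetilde N) = 0$ for $n\gg 0$ whenever $\pd_{Q_a}\widetilde M < \infty$ — this is elementary (finite projective dimension over a local ring kills high $\Ext$) but one must be careful that passing to $\widetilde Q$ and then modding out by $f_a$ does not disturb the relevant finiteness, which is handled by flatness of $Q\subseteq\widetilde Q$ and the fact that $f_a$ is a nonzerodivisor. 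The remaining steps are formal consequences of results already in the excerpt.
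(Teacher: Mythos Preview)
Your proposal is correct and follows essentially the same approach as the paper: apply Theorem \ref{compare} to convert $\tcx_R(M,N)$ into $\cx_R(M,N)$, and then invoke the inequality $\cx_R(M,N)\le \min\{\cx_R M,\cx_R N\}$. The paper simply cites this inequality as Corollary 5.7 of \cite{AB1}, whereas you sketch its proof via the support-variety containment $V^*(Q,R,M,N)\subseteq V^*(Q,R,M)\cap V^*(Q,R,N)$ and Theorem \ref{AB}---which is exactly how \cite{AB1} establishes it---so the two arguments coincide once unwound.
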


\begin{proof}
This follows from theorem \ref{compare} and the fact that
$\cx_R(M,N)\leq \min\{\cx_RM,\cx_RN\}$ (corollary 5.7, \cite{AB1}).
\end{proof}

\section{c-rigidity}\label{c-rigid}

We will say that $(M,N)$ is \textit{c-rigid} if the vanishing of
$c$ consecutive $\Tor$'s forces the vanishing of all higher
$\Tor$'s. The module $M$ is called \textit{c-rigid} if $(M,N)$ is
\textit{c-rigid} for all finitely generated $N$. When $c=1$ we
simply say that $(M,N)$ (or $M$) is \textit{rigid}. We first
recall some notations and results from \cite{Ho1} and \cite{Da1}:

A local ring $(R,m,k)$ is a \textit{admissible complete
intersection} if  $\hat R \cong Q/(f_1,...,f_r)$, $f_1,...,f_r$ form
a regular sequence on $Q$ and  $Q$ is a power series ring over a
field or a discrete valuation ring.\\

\textbf{The function $\theta^R(M,N)$}\\

Let $R = T/(f)$ be an admissible local hypersurface. The function
$\theta^R(M,N)$ was introduced by Hochster ([Ho1]) for any pair of
finitely generated modules $M,N$ such that $f_R(M,N)<\infty$  as:
$$ \theta^R(M,N) = \ell(\Tor_{2e+2}^R(M,N)) - \ell(\Tor_{2e+1}^R(M,N)) .$$
where $e$ is any integer $\geq (d+2)/2$. It is well known (see
\cite{Ei}) that $\Tor^R(M,N)$ is periodic of period 2 after $d+1$
spots, so this function is well-defined. The theta function
satisfies the following properties. First, if $M\tensor_RN$ has
finite length, then:
$$\theta^R(M,N) = \chi^T(M,N).$$
Secondly, $\theta^R(M,N)$ is biadditive on short exact sequence,
assuming it is defined. Specifically, for any short exact
sequence:
$$0 \to N_1 \to N_2 \to N_3 \to 0$$
and any module $M$ such that $f_R(M,N_i)<\infty$ for all
$i=1,2,3$, we have $\theta^R(M,N_2) = \theta^R(M,N_1) +
\theta^R(M,N_3)$. Similarly, $\theta(M,N)$ is additive on the
first variable.

In \cite{Da1}, we show that when $\theta^R(M,N)$ can be defined
and vanishes, then $(M,N)$ is rigid:

\begin{prop}\label{rg1}
Let $R$ be an admissible hypersurface and $M,N$ be $R$-modules
such that $f_R(M,N)<\infty$ (so that $\theta^R(M,N)$ can be
defined). Assume $\theta^R(M,N)=0$. Then $(M,N)$ is rigid.
\end{prop}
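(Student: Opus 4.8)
The plan is to prove this by induction on the number of vanishing hypotheses, reducing the problem to a single "jump" in the $\Tor$ sequence. Recall $(M,N)$ rigid means: if $\Tor_i^R(M,N)=0$ for some $i\geq 1$, then $\Tor_j^R(M,N)=0$ for all $j\geq i$. So suppose $\Tor_n^R(M,N)=0$ for some $n\geq 1$; we must show $\Tor_{n+1}^R(M,N)=0$, and then iterate. First I would pass to $\hat R\cong T/(f)$ with $T$ regular (a power series ring over a field or DVR); since completion is faithfully flat, $\Tor$ modules, their lengths, $\theta$, and the finite-length condition $f_R(M,N)<\infty$ are all unaffected, so we may assume $R=T/(f)$ outright and that $T$ is regular local.

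The core of the argument exploits that over a hypersurface, $\Tor_i^R(M,N)$ is eventually $2$-periodic (by \cite{Ei}, after $d+1$ spots, as quoted). Write $e$ large, say $e\gg \max\{n,(d+2)/2\}$. By periodicity, $\theta^R(M,N) = \ell(\Tor_{2e+2}^R(M,N)) - \ell(\Tor_{2e+1}^R(M,N))$, and moreover all of $\Tor_{i}^R(M,N)$ for $i \geq d+1$ are determined by the two classes $\Tor_{\text{even}}$ and $\Tor_{\text{odd}}$. The key observation is that $\theta^R(M,N)=0$ forces $\ell(\Tor_{2e+1}^R(M,N)) = \ell(\Tor_{2e+2}^R(M,N))$, i.e. the two periodic lengths are equal. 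Now I would use the change-of-rings long exact sequence \ref{longexact} for $R = T/(f)$: since $T$ is regular and $\operatorname{pd}_T$ is finite, for $i$ large $\Tor_i^T(M,N)=0$, so the sequence gives isomorphisms $\Tor_{i+1}^R(M,N)\cong \Tor_{i-1}^R(M,N)$ for $i\gg 0$ — which is just the periodicity again, but it also pins down how vanishing at one spot propagates. The cleanest route: the alternating-sum bookkeeping from the long exact sequence, combined with $\theta=0$, shows that $\sum (-1)^i \ell(\Tor_i^R(M,N))$ over a periodic block telescopes to zero, which means that if one of the two periodic Betti numbers vanishes they both do, and since $\Tor_n=0$ with $n$ large enough to be in the periodic range, we get vanishing of all $\Tor_j^R(M,N)$ for $j\geq n$.

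To handle a general (not necessarily large) $n$ with $\Tor_n^R(M,N)=0$, I would first localize at the punctured spectrum: for every prime $\mathfrak p \neq m$, $f_{R_{\mathfrak p}}(M_{\mathfrak p},N_{\mathfrak p})$ is automatically finite (indeed often zero-dimensional issues vanish), and by Noetherian induction on $\dim R$ one may assume rigidity of $(M_{\mathfrak p},N_{\mathfrak p})$ is already known, so $\Tor_j^R(M,N)$ has finite length for $j\geq n$. Then an Euler-characteristic argument over the finite-length range, together with $\theta^R(M,N)=0$ and the $2$-periodicity, forces $\Tor_{n+1}^R(M,N)=0$: concretely, consider $\ell(\Tor_{n+1}^R(M,N)) - \ell(\Tor_{n}^R(M,N)) = \ell(\Tor_{n+1}^R(M,N))$ and relate its sign/parity to $\theta$ via the long exact sequence with $T$, where vanishing of $\Tor_n$ decouples the sequence into a piece computing the constant $\pm\theta^R(M,N)=0$.

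The main obstacle I anticipate is the bookkeeping at the "boundary" between the non-periodic initial segment and the periodic tail: one has to be careful that $n$ (or the index where $\Tor$ first vanishes) lies far enough out, and push the vanishing backward/forward using the exact sequences without sign errors, while simultaneously running the dimension induction to guarantee finite length in the relevant range. In other words, the genuinely delicate step is combining (i) eventual $2$-periodicity, (ii) the hypothesis $\theta^R(M,N)=0$ forcing equality of the two periodic lengths, and (iii) the change-of-rings long exact sequence to convert a single vanishing $\Tor_n^R(M,N)=0$ into vanishing of the whole periodic tail — everything else is routine. (This is of course exactly the content of the cited \cite{Da1}, so the honest "proof" here is to quote that paper; the sketch above is the shape of the argument one would reconstruct.)
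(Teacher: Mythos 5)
The paper itself supplies no proof of Proposition \ref{rg1}; it simply cites \cite{Da1}, so there is no in-paper argument to compare against line by line. That said, your sketch falls short of a proof, and the place it breaks down is exactly the place you flag as ``the genuinely delicate step.''

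The easy half of the argument --- once $n$ is large enough that the $\Tor$ sequence is already $2$-periodic, $\theta^R(M,N)=0$ forces the even- and odd-indexed lengths to agree, so a single vanishing in the periodic range kills everything --- is fine, and the reduction to finite-length higher $\Tor$ via localization and induction on dimension is reasonable (modulo the unaddressed point that one must check admissibility, or at least the relevant properties, persist under localization/completion). The real content of the proposition is the case where $\Tor_n^R(M,N)=0$ for a \emph{small} $n$, below the periodicity threshold, and here your proposal does not actually close the gap. The specific claim ``consider $\ell(\Tor_{n+1}^R(M,N)) - \ell(\Tor_{n}^R(M,N)) = \ell(\Tor_{n+1}^R(M,N))$ and relate its sign/parity to $\theta$ via the long exact sequence with $T$, where vanishing of $\Tor_n$ decouples the sequence into a piece computing the constant $\pm\theta^R(M,N)$'' is not substantiated and, as stated, is not correct: $\theta^R(M,N)$ is a difference of lengths in the stable (periodic) range, and there is no a priori reason a single length $\ell(\Tor_{n+1}^R(M,N))$ at a small index should equal $\pm\theta$, nor does the change-of-rings sequence ``decouple'' the way you assert (each $\Tor_i^R$ appears twice in that sequence, so a single vanishing does not cleanly split it).

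What is missing is a genuine mechanism to push vanishing forward from a low index into the periodic tail. This is where the hypothesis that $R$ is \emph{admissible} (so $T$ is a power series ring over a field or a DVR) is actually used: one needs a nontrivial positivity/rigidity input over the regular ring $T$ --- in the spirit of Lichtenbaum's theorem on (partial) Euler characteristics, or a comparable syzygy argument --- to convert information from the $\Tor^T$ side into the desired vanishing of $\Tor^{R}$ at indices beyond $n$. Your sketch never invokes any such input, so the bridge from ``$\Tor_n^R=0$'' to ``$\Tor_{n+1}^R=0$'' is not built. As you yourself note, the honest proof here is to cite \cite{Da1}; your reconstruction captures the scaffolding (periodicity, the change-of-rings sequence, localization) but leaves out the one ingredient that actually makes the argument go.
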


By Remark \ref{SHG}, $\theta^R(M,N)=2\h^R_1(M,N)$ (when both are
defined). Using our generalized function $\h^R(M,N)$, it is easy to
get similar results on c-rigidity over complete intersections. We
first isolate a simple corollary of \ref{longexact}, whose proof we
will omit:

\begin{cor}\label{cor6.2}
Let $Q$ be a Noetherian ring with $f$ a nonzerodivisor on $Q$. Let
$R =Q/(f)$ and $M,N$ be $R$-modules. Let $n,i,c$ be integers. \\
(1) If $\Tor_Q^i(M,N)=0$ for all $i\geq n$ then $\Tor_i^R(M,N)
\cong \Tor_{i+2}^R(M,N)$ for all $i\geq n-1$.\\
(2) If $\Tor_R^i(M,N)=0$ for $n \leq i \leq n+c$ then
$\Tor_Q^i(M,N)=0$ for $n+1 \geq i \geq n+c$.
\end{cor}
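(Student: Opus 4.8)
The plan is to extract both statements directly from the long exact sequence for change of rings in Proposition \ref{longexact}, which reads, for each $m$,
$$
\cdots \to \Tor_{m}^R(M,N) \to \Tor_{m+1}^Q(M,N) \to \Tor_{m+1}^R(M,N) \to \Tor_{m-1}^R(M,N) \to \Tor_{m}^Q(M,N) \to \cdots
$$
First I would handle part (1). The hypothesis is that $\Tor_i^Q(M,N)=0$ for all $i\ge n$. Fix any $i\ge n-1$; then both $\Tor_{i+1}^Q(M,N)$ and $\Tor_{i+2}^Q(M,N)$ vanish (since $i+1\ge n$ and $i+2\ge n$), so the relevant four-term stretch of the sequence
$$
\Tor_{i+2}^Q(M,N) \to \Tor_{i+2}^R(M,N) \to \Tor_{i}^R(M,N) \to \Tor_{i+1}^Q(M,N)
$$
has zeros on both ends, forcing the connecting map $x$-multiplication $\Tor_{i+2}^R(M,N)\to\Tor_i^R(M,N)$ (which by Proposition \ref{Eiops}(2) is the Eisenbud operator, up to sign) to be an isomorphism. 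That gives the stated periodicity $\Tor_i^R(M,N)\cong\Tor_{i+2}^R(M,N)$ for all $i\ge n-1$.

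For part (2), suppose $\Tor_i^R(M,N)=0$ for $n\le i\le n+c$. Fix $j$ with $n+1\le j\le n+c$. Look at the three consecutive terms of the long exact sequence
$$
\Tor_{j}^R(M,N) \to \Tor_{j}^Q(M,N) \to \Tor_{j-1}^R(M,N).
$$
Since $n\le j-1$ and $j\le n+c$, both outer terms vanish by hypothesis, hence $\Tor_j^Q(M,N)=0$. This is exactly the asserted range $n+1\le j\le n+c$. (The statement in the text has the inequalities written as $n+1\ge i\ge n+c$, which should read $n+1\le i\le n+c$; I would silently correct this in the write-up.)

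I do not anticipate any genuine obstacle here: the whole argument is a matter of locating the correct window in the six-term-periodic long exact sequence and reading off vanishing or isomorphism from having zeros on both sides of a short stretch. The only mild subtlety worth a sentence is the identification in part (1) of the connecting homomorphism with the cohomology operator $x_1$, which we already have from Proposition \ref{Eiops}(2); this is what makes the conclusion "periodicity via the Eisenbud operator" rather than merely an abstract isomorphism of the two $\Tor$ modules. Accordingly this is the kind of corollary whose proof one would reasonably omit, as the excerpt does.
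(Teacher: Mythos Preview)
Your proposal is correct and matches the paper's approach exactly: the paper explicitly says the corollary ``is a simple corollary of \ref{longexact}, whose proof we will omit,'' and you have supplied precisely that omitted argument by reading off the appropriate windows of the change-of-rings long exact sequence. Your observation that the inequality $n+1 \geq i \geq n+c$ in part~(2) should read $n+1 \leq i \leq n+c$ is also correct.
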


\begin{thm}\label{rig2}
Let $R$ be a codimension $r>0$ admissible complete intersection
and $M,N$ be $R$-modules. Assume that $f_R(M,N)<\infty$ and
$\h_r^R(M,N) = 0$. Then $(M,N)$ is $r$-rigid.
\end{thm}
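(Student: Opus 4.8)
The plan is to proceed by induction on the codimension $r$, using Proposition \ref{rg1} as the base case $r=1$ (a codimension-one admissible complete intersection is an admissible hypersurface, and $\h_1^R(M,N)=0$ is the same as $\theta^R(M,N)=0$ by Remark \ref{SHG}). So assume $r\geq 2$, write $\hat R \cong Q/(f_1,\dots,f_r)$ with $Q$ regular, set $R'=Q/(f_1,\dots,f_{r-1})$, and note that $R'$ is an admissible complete intersection of codimension $r-1$ while $R=R'/(f_r)$ with $f_r$ a nonzerodivisor on $R'$. After passing to the completion (which changes none of the relevant vanishing or length data) we may assume $R=\hat R$.

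The key step is to transfer the hypothesis down to $R'$. First I would observe that $f_R(M,N)<\infty$ together with $\h_r^R(M,N)=0$ lets us control $\lcx$: by Theorem \ref{beta1}(3) we have $c=\lcx_R(M,N)\le r$, and the vanishing of $\h_r^R(M,N)$ forces $n_0=0$ in the formula of Theorem \ref{beta1}(2), which by the change-of-rings formula Theorem \ref{beta2}(3) (applied with $e=r$, using $e-1=r-1\ge\lcx_{R'}(M,N)$, the latter inequality coming from Corollary \ref{AB3} or directly from the long exact sequence) gives $\h_{r-1}^{R'}(M,N)=2r\,\h_r^R(M,N)=0$. One also needs $f_{R'}(M,N)<\infty$: this follows from the long exact sequence \ref{longexact} relating $\Tor^R$, $\Tor^{R'}$, and the degree $-2$ shift, since $\ell(\Tor_i^R(M,N))<\infty$ for $i\gg0$ squeezes $\Tor_i^{R'}(M,N)$ between two finite-length modules. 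Now the induction hypothesis applies: $(M,N)$ is $(r-1)$-rigid over $R'$.

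Finally I would run the rigidity argument itself. Suppose $\Tor_i^R(M,N)=0$ for $n\le i\le n+r-1$ (that is, $r$ consecutive vanishings) with $n\ge 1$. By Corollary \ref{cor6.2}(2) this yields $\Tor_i^{R'}(M,N)=0$ for $r-1$ consecutive values $n+1\le i\le n+r-1$. Since $(M,N)$ is $(r-1)$-rigid over $R'$, we conclude $\Tor_i^{R'}(M,N)=0$ for all $i\ge n+1$. Then Corollary \ref{cor6.2}(1), applied over the presentation $R=R'/(f_r)$, gives $\Tor_i^R(M,N)\cong\Tor_{i+2}^R(M,N)$ for all $i\ge n$; combined with the vanishing of $r\ge 2$ consecutive $\Tor^R$'s starting at $n$ (in particular two consecutive ones), periodicity propagates the vanishing to all $i\ge n$, so $(M,N)$ is $r$-rigid.

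The main obstacle I expect is the bookkeeping needed to be certain that $\lcx_{R'}(M,N)\le r-1$ and that $f_{R'}(M,N)<\infty$, so that Theorem \ref{beta2}(3) and the inductive hypothesis are genuinely applicable — in other words, verifying that all the finiteness and complexity conditions descend cleanly along the tower $R=R'/(f_r)$. Once that is in place, the core of the argument is just the interplay of the two parts of Corollary \ref{cor6.2} with periodicity, which is routine; the substantive content is entirely carried by the change-of-rings identity for $\h$ and by Proposition \ref{rg1} at the bottom of the induction.
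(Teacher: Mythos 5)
Your proof is correct and follows essentially the same route as the paper's: induction on $r$ with Proposition~\ref{rg1} as the base case, transfer of the hypothesis $\h_r^R(M,N)=0$ to $\h_{r-1}^{R'}(M,N)=0$ via Theorem~\ref{beta2}(3), and then the interplay of Corollary~\ref{cor6.2}(2) (to descend $\Tor$-vanishing to $R'$) and Corollary~\ref{cor6.2}(1) (to propagate vanishing back over $R$ via periodicity). The only difference is cosmetic — you drop $f_r$ while the paper drops $f_1$ — and you spell out more carefully that $f_{R'}(M,N)<\infty$ and $\lcx_{R'}(M,N)\le r-1$ (the latter is immediate from Theorem~\ref{beta1}(3) applied to $R'$, with no need to invoke Corollary~\ref{AB3}), which the paper leaves implicit.
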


\begin{proof}
We use induction on $r$. The case $r=1$ is Proposition \ref{rg1}.
We may assume $R=Q/(f_1,...,f_r)$ where $Q$ is regular. Suppose
$\Tor_R^i(M,N)=0$ for $n+1 \geq i \geq n+r$ for some integer $n$.
Let $R'=Q/(f_2,...,f_r)$. Then by Corollary \ref{cor6.2} we have
$\Tor_{R'}^i(M,N)=0$ for $n+2 \leq i \leq n+r$. By Theorem
\ref{beta2} $\h_{r-1}^{R'}(M,N)=0$ so by induction hypothesis
$(M,N)$ is $(r-1)$-rigid as $R'$ module. Thus $\Tor_{R'}^i(M,N)=0$
for $i \geq n+2$ and by \ref{cor6.2} again we have
$\Tor_R^i(M,N)=0$ for $i \geq n+1$.
\end{proof}

\begin{cor}\label{rig2.1}
Let $R$ be a codimension $r$ admissible complete intersection and
$M,N$ be $R$-modules. Assume the $\pd_{R_p}M_p <\infty$ for all
$p\in Y(R)$ and $[N]=0$ in $\overline{G}(R)_{\mathbb{Q}}$, the
reduced Grothendieck group finitely generated modules over $R$ with
rational coefficients. Then $(M,N)$ is $r$-rigid.
\end{cor}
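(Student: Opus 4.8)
The strategy is to reduce the corollary to Theorem~\ref{rig2} by verifying its two hypotheses: that $f_R(M,N)<\infty$ and that $\h_r^R(M,N)=0$. The first is where the condition $\pd_{R_p}M_p<\infty$ for all $p\in Y(R)$ enters. Here $Y(R)$ should be the set of primes $p$ with $\dim R/p = 1$ (or perhaps the punctured spectrum minus the maximal ideal in some codimension sense — the exact definition is given in the cited references); the point is that away from a finite set of ``bad'' primes of small dimension, $M_p$ has finite projective dimension, so $\Tor_i^{R_p}(M_p,N_p)=0$ for $i\gg 0$, which forces $\Tor_i^R(M,N)$ to have support of dimension $\leq 1$ for $i\gg 0$. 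A further localization argument at the height-one-in-$R/\mathfrak{a}$ primes, using that there are only finitely many and that $\Tor$ commutes with localization, then upgrades this to finite length, giving $f_R(M,N)<\infty$.

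\textbf{Vanishing of $\h_r^R$ via the Grothendieck group.} The second hypothesis, $\h_r^R(M,N)=0$, I would obtain from the hypothesis $[N]=0$ in $\overline{G}(R)_{\mathbb{Q}}$ together with biadditivity (Theorem~\ref{beta2}(2)). The reduced Grothendieck group $\overline{G}(R)$ is the quotient of $G(R)$ by the classes $[R/p]$ with $\dim R/p = \dim R$ (or by a similar ``top-dimensional'' relation); saying $[N]=0$ in $\overline{G}(R)_{\mathbb{Q}}$ means that in $G(R)_{\mathbb{Q}}$ one can write $[N] = \sum q_j[R/\mathfrak{q}_j]$ with $\mathfrak{q}_j$ of maximal dimension and $q_j\in\mathbb{Q}$. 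For each such $\mathfrak{q}_j$, the module $R/\mathfrak{q}_j$ has $\dim = \dim R$, so $M\tensor_R R/\mathfrak{q}_j$ is supported on $\Supp M \cap V(\mathfrak{q}_j)$, which has dimension $\leq 1$ (again using $\pd_{R_p}M_p<\infty$ on $Y(R)$, so that $\Supp M$ meets $V(\mathfrak{q}_j)$ only in small dimension after the relevant $\Tor$s are accounted for) — hence all higher $\Tor_i^R(M,R/\mathfrak{q}_j)$ have finite length and, more importantly, $\dim(M) + \dim(R/\mathfrak{q}_j) \le \dim R$ forces $\h_r^R(M,R/\mathfrak{q}_j)=0$ by a dimension/vanishing count: over a complete intersection of codimension $r$, $\h_r^R(M,N)$ vanishes whenever $\dim M + \dim N < \dim R + r$ — this is essentially Gulliksen's vanishing statement recalled in Remark~\ref{SHG}, or follows because $\lcx_R(M,R/\mathfrak{q}_j)<r$ in that range. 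Then biadditivity in the second variable, applied along a filtration realizing $[N] = \sum q_j[R/\mathfrak{q}_j]$ in $G(R)_{\mathbb{Q}}$, gives $\h_r^R(M,N) = \sum q_j\,\h_r^R(M,R/\mathfrak{q}_j) = 0$.

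\textbf{Conclusion and the main obstacle.} Once $f_R(M,N)<\infty$ and $\h_r^R(M,N)=0$ are established, Theorem~\ref{rig2} applies directly and yields that $(M,N)$ is $r$-rigid. The main obstacle I anticipate is organizing the biadditivity argument cleanly at the level of the Grothendieck group: biadditivity in Theorem~\ref{beta2}(2) is stated for a single short exact sequence and requires $e \ge \max_i \lcx(M,N_i)$ on all terms, so to pass from an identity in $G(R)_{\mathbb{Q}}$ to an identity of $\h_r$-values I need to know that $\lcx_R(M,-) \le r$ uniformly on the modules appearing in a filtration (which holds since $\lcx_R(M,-)\le\cx_R M \le r$ by Corollary~\ref{AB3}, once finite-length-$\Tor$ is known) and to handle the rational coefficients by clearing denominators — i.e. working with $N^{\oplus m}$ for suitable $m$ and a genuine filtration of an actual module representing $m[N]$ minus top-dimensional pieces. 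This bookkeeping, plus the precise matching of the definitions of $Y(R)$ and $\overline{G}(R)$ to the dimension inequality that kills $\h_r^R(M,R/\mathfrak{q})$, is the delicate part; the rest is a formal citation of Theorem~\ref{rig2}.
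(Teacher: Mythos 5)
Your overall plan — verify $f_R(M,N)<\infty$ and $\h_r^R(M,N)=0$, then quote Theorem~\ref{rig2} — is indeed the right one (the paper simply cites the hypersurface case, \cite{Da1}~4.3, with \ref{rig2} in place of \ref{rg1}). But the execution of the second step contains a genuine error, and this is traceable to guessing the definitions of $Y(R)$ and $\overline{G}(R)$ incorrectly.

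On the definitions: $Y(R)$ is the punctured spectrum $\Spec(R)\setminus\{\mathfrak m\}$, not the set of primes with $\dim R/p=1$; with this reading, $\pd_{R_p}M_p<\infty$ for all $p\in Y(R)$ immediately gives $\Tor_i^R(M,N')_p=0$ for $i>\dim R$ and every $p\ne\mathfrak m$, hence $f_R(M,N')<\infty$ for \emph{every} $N'$ — no secondary localization argument is needed. And $\overline{G}(R)$ is $G(R)/\langle[R]\rangle$, not the quotient by the top-dimensional classes $[R/p]$.

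On the vanishing of $\h_r$: your argument is broken at two points. First, the claim that $\Supp M\cap V(\mathfrak q_j)$ has dimension $\le 1$ does not follow from $\pd_{R_p}M_p<\infty$ on $Y(R)$ (take $M=R$: projective dimension is zero everywhere, yet $\Supp M\cap V(\mathfrak q_j)=V(\mathfrak q_j)$). Second, the implication you invoke — ``$\dim M+\dim N<\dim R+r$ forces $\h_r^R(M,N)=0$'' — is not what \ref{SHG} says, and the form that does appear in the paper (Corollary~\ref{asym1}) requires $\ell(M\tensor_R N)<\infty$, which you have not secured. In fact Theorem~\ref{heightstable} runs the implication the \emph{other} way. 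The correct argument is much simpler and needs neither dimension count: once $f_R(M,N')<\infty$ for all $N'$, biadditivity (Theorem~\ref{beta2}(2), with $e=r\ge\lcx$ automatic since the codimension is $r$) makes $N'\mapsto\h_r^R(M,N')$ a homomorphism $G(R)\to\mathbb Q$; it kills $[R]$ because $\Tor_i^R(M,R)=0$ for $i>0$ and the $\beta_0$-term is annihilated by dividing by $n^r$ with $r\ge1$; hence it factors through $\overline{G}(R)_{\mathbb Q}$ and $[N]=0$ gives $\h_r^R(M,N)=0$. With that substitution your proof goes through.
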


\begin{proof}
The proof is identical to the hypersurface case (see \cite{Da1},
4.3) using \ref{rig2} instead of \ref{rg1}.
\end{proof}

\begin{cor}\label{rig2.2}
Let $R$ be an codimension $r>0$ admissible complete intersection
and $M$ be an $R$-module such that $[M] =0$ in $\overline
G(R)_{\mathbb{Q}}$. Let $IPD(M):= \{ p\in \Spec(R) |\ \pd_{R_p}M_p
=\infty \}$. Assume that $IPD(M)$ is either $\emptyset$ or is
equal to $\Sing(R)$. Then $M$ is $r$-rigid.
\end{cor}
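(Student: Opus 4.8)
The plan is to reduce Corollary \ref{rig2.2} to Corollary \ref{rig2.1} by choosing $N = M$. First I would observe that the hypothesis $[M] = 0$ in $\overline{G}(R)_{\mathbb{Q}}$ is exactly the condition $[N] = 0$ required in \ref{rig2.1} applied with $N = M$. So what remains is to verify the condition on $Y(R)$, namely that $\pd_{R_p} M_p < \infty$ for all $p \in Y(R)$, where $Y(R)$ (following \cite{Da1}) denotes the set of primes $p$ such that $R_p$ is not regular but $R_q$ is regular for every $q \subsetneq p$ — that is, the minimal primes in $\Sing(R)$. By the hypothesis, $IPD(M)$ is either empty or equal to $\Sing(R)$.

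In the first case, $IPD(M) = \emptyset$ means $\pd_{R_p} M_p < \infty$ for \emph{all} $p \in \Spec(R)$, so in particular for all $p \in Y(R)$, and \ref{rig2.1} applies directly to give that $(M,M)$ is $r$-rigid, hence $M$ is $r$-rigid in the sense that the vanishing of $r$ consecutive $\Tor^R_i(M,M)$ forces all higher ones to vanish. The second case is where the key point lies: if $IPD(M) = \Sing(R)$, I need to check that $Y(R) \cap IPD(M) = \emptyset$, i.e. that no minimal prime of $\Sing(R)$ lies in $IPD(M) = \Sing(R)$ — but that is absurd, since a minimal prime of $\Sing(R)$ certainly lies in $\Sing(R)$. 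So in this case $Y(R) \subseteq IPD(M)$, which is the \emph{opposite} of what is needed, and the naive reduction fails. The resolution must be that for $p \in Y(R)$, the ring $R_p$ is a complete intersection whose singular locus is just the maximal ideal of $R_p$, and $\pd_{R_p} M_p = \infty$ only because $p$ itself is a bad prime; one then runs a localization-and-induction argument as in the proof of \ref{AB2}: localize at the punctured spectrum of $R_p$, where $M$ now has finite projective dimension everywhere, apply \ref{rig2.1} over $R_p$ to conclude rigidity there, and then lift back. Concretely, for $p \in Y(R)$ with $\dim R_p = s$, the module $M_p$ over $R_p$ has $IPD(M_p) \subseteq \{pR_p\}$, so $M_p$ is $\mathrm{codim}(R_p)$-rigid over $R_p$, and in particular rigidity of $\Tor^R(M,N)$ can be controlled prime by prime.

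Thus the actual argument I would write runs: let $N$ be any finitely generated $R$-module; I want to show $(M,N)$ is $r$-rigid. Using the long exact sequence machinery (\ref{cor6.2}) and descending induction on $\dim R$ exactly as in the proof of \ref{AB2}, reduce to the case where $\Tor^R_i(M,N)$ has finite length for $i \gg 0$, i.e. $f_R(M,N) < \infty$; this reduction works because at every prime $p$ in the punctured spectrum, either $\pd_{R_p} M_p < \infty$ (when $p \notin \Sing(R)$, automatic, or when $IPD(M) = \emptyset$) or $p \in \Sing(R)$ is not minimal and we invoke the induction hypothesis over $R_p$ together with the fact that $[M_p] = 0$ in $\overline{G}(R_p)_{\mathbb{Q}}$ (which follows since specialization maps respect these groups), so $M_p$ is rigid over $R_p$ by induction and $\Tor^{R_p}_i(M_p, N_p)$ vanishes for $i \gg 0$, forcing $\Tor^R_i(M,N)$ to have finite length. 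Once $f_R(M,N) < \infty$, the condition $[M] = 0$ in $\overline{G}(R)_{\mathbb{Q}}$ gives $\eta^R_r(M,N) = 0$ (the $\eta$ function factors through the reduced Grothendieck group when one variable is fixed and has finite projective dimension away from $Y(R)$, by biadditivity \ref{beta2}(2) and the change-of-rings \ref{beta2}(3), reducing to Serre vanishing over the regular ring $Q$ localized appropriately — this is the content of the hypersurface computation in \cite{Da1} that $\theta^R(M,N)$ depends only on the class in $\overline{G}(R)$), and then Theorem \ref{rig2} applies to conclude $(M,N)$ is $r$-rigid.

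The hard part will be the verification that $\eta^R_r(M,N) = 0$ under the stated hypotheses — specifically, showing that $\eta^R_r(M,-)$ is well-defined on $\overline{G}(R)_{\mathbb{Q}}$ when $\pd_{R_p} M_p < \infty$ for all $p \in Y(R)$, i.e. that it kills the relations defining the reduced Grothendieck group. The biadditivity of \ref{beta2}(2) handles the short-exact-sequence relations directly; the subtle point is why $\eta^R_r$ vanishes on modules supported in low dimension (which is what "reduced" quotients out), and this is where the finiteness of $\pd_{R_p} M_p$ on $Y(R)$ must be combined with the change-of-rings formula \ref{beta2}(3) to peel off the hypersurface equations one at a time and land in the Serre-multiplicity setting over the regular local ring $Q$, where $\chi^Q$ famously vanishes on modules of the wrong dimension. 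Carrying this out carefully, parallel to \cite[\S4]{Da1}, is the technical heart; the rigidity deduction from $\eta = 0$ is then immediate from Theorem \ref{rig2}.
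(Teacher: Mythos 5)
Your opening move, reducing to Corollary \ref{rig2.1} with $N=M$, is a non-starter for the reason you yourself observe, but more fundamentally because $r$-rigidity of $M$ is a statement about $(M,N)$ for \emph{every} $N$; you cannot specialize to $N=M$.

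The induction on $\dim R$ to establish $f_R(M,N)<\infty$ is the right idea, but the step as written has a genuine gap. You claim ``$M_p$ is rigid over $R_p$ by induction and $\Tor^{R_p}_i(M_p,N_p)$ vanishes for $i\gg0$,'' but rigidity by itself gives no unconditional vanishing; it is a conditional statement. What actually makes the argument work is that you are trying to verify $r$-rigidity, so you may \emph{assume} $\Tor_i^R(M,N)=0$ for $n\le i\le n+r-1$; this hypothesis localizes to give $r$ consecutive vanishing $\Tor$s over each $R_p$, and \emph{only then} does $r$-rigidity of $M_p$ (from the induction hypothesis applied with \ref{rig2.2} itself, not \ref{rig2.1}) force $\Tor_i^{R_p}(M_p,N_p)=0$ for all $i\ge n$. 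Your sketch never invokes this starting hypothesis, and your case split ``$p\notin\Sing(R)$, or $p\in\Sing(R)$ not minimal'' silently omits the minimal primes of $\Sing(R)$, where $\pd_{R_p}M_p=\infty$ and the induction hypothesis must also be applied.

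The more serious gap is in the step $\eta_r^R(M,N)=0$. You describe showing that ``$\eta_r^R(M,-)$ is well-defined on $\overline{G}(R)_{\mathbb{Q}}$ when $\pd_{R_p}M_p<\infty$ for all $p\in Y(R)$.'' This is the set-up of Corollary \ref{rig2.1}, which uses a pd hypothesis on $M$ together with $[N]=0$. In \ref{rig2.2} the Grothendieck-group condition is on $M$, so one needs biadditivity in the \emph{first} variable, i.e.\ that $\eta_r^R(-,N)$ is defined on all modules occurring in the chain of short exact sequences realizing the relation $[M]=0$; this requires a finiteness condition on $N$, not on $M$, and nothing in the hypotheses gives one directly. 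In the hypersurface case (\cite{Da1}, 4.5) the missing bridge is precisely the observation that once $\Tor_i^{R_p}(M_p,N_p)=0$ for $i\gg0$ and $\pd_{R_p}M_p=\infty$ (which holds at every singular non-maximal $p$ when $IPD(M)=\Sing(R)$), one can force $\pd_{R_p}N_p<\infty$, so that $\eta_r^R(-,N)$ becomes globally defined and the Grothendieck-group argument goes through. Your proposal never isolates this point, and the variable mix-up between the two paragraphs makes it appear you are re-proving \ref{rig2.1} rather than \ref{rig2.2}. The paper's proof is simply a citation of the hypersurface argument of \cite{Da1}, 4.5 with \ref{rig2} replacing \ref{rg1}; your reconstruction is heading in a reasonable direction but omits the key idea that makes the $\eta$-vanishing step work under the $IPD(M)=\Sing(R)$ hypothesis.
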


\begin{proof}
The proof is identical to the hypersurface case (see \cite{Da1},
4.5) using \ref{rig2} instead of \ref{rg1}.
\end{proof}

\begin{cor}\label{rig2.3}
Let $R$ be a codimension $r>0$ admissible complete intersection
and $M,N$ be $R$-modules. Assume:\\
(1) $M\tensor_RN$ has finite length.\\
(2) $\dim(M) + \dim(N) \leq \dim(R)+r-1$.\\
Then $(M,N)$ is $r$-rigid. In particular, a finite length module
is $r$-rigid
\end{cor}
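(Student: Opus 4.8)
The plan is to reduce Corollary~\ref{rig2.3} to Theorem~\ref{rig2} by showing that the hypotheses (1) and (2) force $\h_r^R(M,N)=0$. Since $M\tensor_RN$ has finite length, $\ell(\Tor_i^R(M,N))<\infty$ for all $i$, so $f_R(M,N)<\infty$ and $\h_r^R(M,N)$ is defined; the only thing left is to check that it vanishes. First I would recall from Theorem~\ref{beta1} that $\lcx_R(M,N)\le r$ and that $\h_r^R(M,N)$ is, up to a positive constant, the leading coefficient governing the degree-$(r-1)$ polynomial growth of the $\beta_i(M,N)$; equivalently, by Remark~\ref{SHG}, $\h_r^R(M,N)$ agrees up to a nonzero scalar with Gulliksen's $\chi^R(M,N)=p(-1)$, where $P_{M,N}^R(t)=p(t)/(1-t^2)^r$.

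The key step is a dimension count. The natural approach is to use the change of rings formula in Theorem~\ref{beta2}(3) together with the base case: writing $R=Q/(f_1,\dots,f_r)$ with $Q$ regular (after completing, which is harmless for computing lengths of $\Tor$), and $R'=Q/(f_2,\dots,f_r)$, we have $\h_r^R(M,N)=\frac{1}{2r}\h_{r-1}^{R'}(M,N)$ provided $\lcx_{R'}(M,N)\le r-1$. Iterating, one is reduced to the hypersurface case $R_1=Q/(f_1)$, where $\h_1^{R_1}(M,N)=\tfrac12\theta^{R_1}(M,N)=\tfrac12\chi^Q(M,N)$ by Remark~\ref{SHG} and the displayed identity $\theta^R(M,N)=\chi^T(M,N)$ recalled in Section~\ref{c-rigid}. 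Now $\chi^Q(M,N)$ is Serre's intersection multiplicity over the regular local ring $Q$, and Serre's vanishing theorem gives $\chi^Q(M,N)=0$ as soon as $\dim_Q M+\dim_Q N<\dim Q$. Since $\dim Q=\dim R+r$ and $\dim_Q M=\dim_R M$, $\dim_Q N=\dim_R N$, hypothesis (2) says precisely $\dim_Q M+\dim_Q N\le \dim Q-1<\dim Q$, so $\chi^Q(M,N)=0$, hence $\h_r^R(M,N)=0$, and Theorem~\ref{rig2} applies. The final sentence (finite length modules are $r$-rigid) is the special case $\dim M=\dim N=0$, for which (2) holds automatically as $r>0$.

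The main obstacle is bookkeeping the hypotheses needed to run the change-of-rings induction: Theorem~\ref{beta2}(3) requires $\lcx_{R'}(M,N)\le e-1=r-1$ at each stage, so one must track that the length complexity does not jump. But $\lcx_{R'}(M,N)=\tcx_{R'}(M,N)$ by Theorem~\ref{compare} (the higher $\Tor$ have finite length throughout), and $\tcx_{R'}(M,N)\le r-1$ since $R'$ has codimension $r-1$; more precisely Corollary~\ref{AB3} bounds it by $\min\{\cx_{R'}M,\cx_{R'}N\}\le r-1$. Alternatively, and perhaps more cleanly, one can sidestep the induction entirely: since $M\tensor_RN$ has finite length, $\Tor_*^R(M,N)$ is Artinian over $S=R[x_1,\dots,x_r]$, one has $P_{M,N}^R(t)=p(t)/(1-t^2)^r$, and a direct comparison with the free resolution of $M$ over $Q$ (whose Euler characteristic is $\chi^Q(M,N)$) identifies $p(-1)$ with $\pm\chi^Q(M,N)$ up to the standard normalization; this is essentially Gulliksen's computation in \cite{Gu}. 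Either way, the heart of the matter is Serre's vanishing theorem over the regular ring $Q$, and condition (2) is engineered so that its dimension inequality is met.
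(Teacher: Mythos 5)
Your argument is correct, but it is organized differently from the paper's. The paper's proof is a two-line induction on $r$: the base case $r=1$ is cited from \cite{Da1} (Theorem~4.4 there), and the inductive step uses Corollary~\ref{cor6.2}. Concretely, if $\Tor^R_i(M,N)=0$ for $n+1\le i\le n+r$, then over $R'=Q/(f_2,\dots,f_r)$ one gets $\Tor^{R'}_i(M,N)=0$ for $n+2\le i\le n+r$ by \ref{cor6.2}(2); hypotheses (1) and (2) pass to $R'$ (since $\dim R'=\dim R+1$ and the tensor product is unchanged), so the induction hypothesis kills $\Tor^{R'}_i$ for all $i\ge n+2$, and then \ref{cor6.2}(1) gives $\Tor^R_i\cong\Tor^R_{i+2}$ for $i\ge n+1$, finishing the argument. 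You instead route the proof through Theorem~\ref{rig2}: you show $\h_r^R(M,N)=0$ by iterating the change-of-rings formula \ref{beta2}(3) down to $\h_0^Q=\chi^Q$ and invoking Serre's vanishing theorem over the regular $Q$, which applies since (2) gives $\dim M+\dim N<\dim Q$. This is sound --- the needed bound $\lcx\le\textup{codim}$ at each stage is immediate from Theorem~\ref{beta1}(3), so your detour through Theorem~\ref{compare} and Corollary~\ref{AB3} is harmless but unnecessary --- and it has the virtue of making the dependence on Serre's vanishing explicit rather than leaving it buried in the cited hypersurface base case. On the other hand, the paper's induction is shorter and bypasses the $\eta$-machinery entirely once the $r=1$ case is in hand; in effect Theorem~\ref{rig2} (which you use as a black box) is proved by precisely the inductive scheme the paper applies directly here, so you are factoring the paper's argument through an intermediate result rather than finding an independent path.
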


\begin{proof}
The case $r=1$ is 4.4 in \cite{Da1}. Then \ref{cor6.2} allows us
to use induction.
\end{proof}

\begin{rmk}
We want to point out that instead of using $\h_c^R(M,N)$, one can
appeal to $\theta^{R_1}(M,N)$ (here $R_1 = Q/(f_1)$) to prove some
of the above results. We will sketch a proof for \ref{rig2.1}. The
point is that the hypotheses on $M$ and $N$ lift to $R_1$. So $M,N$
are rigid over $R_1$. Now using the change of rings exact sequence
repeatedly shows that $M,N$ are c-rigid over $R$.
\end{rmk}

We also note this generalization of a result by Lichtenbaum in
{\cite{Li}) which says that over an admissible hypersurface, a
module of finite projective dimension is rigid:
\begin{cor}
Let $R$ be a codimension $r>0$ admissible complete intersection,
and $M$ be an $R$-module such that $\cx(M)\leq r-1$. Then  $M$ is
$r$-rigid.
\end{cor}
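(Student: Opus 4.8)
The plan is to reduce the statement to Theorem~\ref{rig2} by producing, after a harmless completion, a complete intersection presentation $R \cong Q/(f_1,\dots,f_r)$ with $Q$ regular in which the relevant $\eta$-function vanishes. First I would complete: c-rigidity is not affected by passing to $\hat R$ (the $\Tor$ modules just get completed, and vanishing is faithfully detected), so we may assume $R = Q/(f_1,\dots,f_r)$ with $Q$ a power series ring over a field or DVR and $f_1,\dots,f_r$ a regular sequence. Then for any finitely generated $N$ with $f_R(M,N)<\infty$ I want to invoke Theorem~\ref{rig2}, so the key claim is that $\h_r^R(M,N) = 0$ whenever $\cx_R(M) \leq r-1$.

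The main step is this vanishing. By Corollary~\ref{AB3} (together with \ref{compare}) we have $\tcx_R(M,N) = \lcx_R(M,N) \leq \min\{\cx_R M,\cx_R N\} \leq \cx_R M \leq r-1$. So $c := \lcx_R(M,N) \leq r-1 < r$. Now apply Theorem~\ref{beta2}(1) with $e = r$: since $e = r > c = \lcx_R(M,N)$, we get $\h_r^R(M,N) = 0$. Thus the hypotheses of Theorem~\ref{rig2} are satisfied (note $r>0$ is given, and the admissibility is exactly what the completion step arranged), so $(M,N)$ is $r$-rigid. Since $N$ was an arbitrary finitely generated module with $f_R(M,N)<\infty$, and $r$-rigidity of $(M,N)$ is only a constraint when finitely many consecutive $\Tor$'s vanish — in which case all higher $\Tor$'s have finite length by Corollary~\ref{cor6.2} and the finite-length hypothesis $f_R(M,N)<\infty$ is automatic from some point on — the module $M$ is $r$-rigid.

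The one point needing a little care, and the part I expect to be the main obstacle, is the bookkeeping around the condition $f_R(M,N)<\infty$: Theorem~\ref{rig2} and the $\eta$-machinery require it, but $r$-rigidity of the module $M$ is a statement about \emph{all} finitely generated $N$. The resolution is that $r$-rigidity is vacuous unless $\Tor_i^R(M,N) = 0$ for $r$ consecutive values of $i$; once that happens, Corollary~\ref{cor6.2} (applied inductively down the chain $Q/(f_1) , Q/(f_1,f_2),\dots$) forces the higher $\Tor^R$ modules to be eventually $0$, hence certainly of finite length, so $f_R(M,N)<\infty$ holds and the argument above applies. Apart from this, everything is a direct citation of \ref{AB3}, \ref{beta2}, and \ref{rig2}.
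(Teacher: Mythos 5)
Your main computation---using \ref{compare} and \ref{AB3} to get $\lcx_R(M,N) = \tcx_R(M,N) \leq \cx_R M \leq r-1 < r$, hence $\h_r^R(M,N)=0$ by \ref{beta2}(1), hence $(M,N)$ is $r$-rigid by \ref{rig2}---is exactly the paper's argument and is correct \emph{once $f_R(M,N)<\infty$ is known}. The gap is in how you propose to obtain $f_R(M,N)<\infty$.

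You claim that $r$ consecutive vanishings of $\Tor_i^R(M,N)$ plus Corollary \ref{cor6.2} already force all higher $\Tor^R$ to vanish. This cannot be right; if it were, the entire $\eta$-machinery would be superfluous, since the conclusion of $r$-rigidity would already follow from \ref{cor6.2}. Tracing what \ref{cor6.2}(2) actually gives: $r$ consecutive vanishings over $R=Q/(f_1,\dots,f_r)$ descend to $r-1$ consecutive vanishings over $Q/(f_1,\dots,f_{r-1})$, then $r-2$, and so on, and after $r-1$ steps you are left with a \emph{single} vanishing $\Tor^{Q/(f_1)}_{n+r-1}(M,N)=0$ and no information over $Q$. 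Part (1) of \ref{cor6.2}, combined with regularity of $Q$, gives eventual $2$-periodicity of $\Tor^{Q/(f_1)}$, so a single vanishing kills at best the $\Tor^{Q/(f_1)}$'s of one parity---nowhere near eventual vanishing of all $\Tor_i^R(M,N)$. So the finiteness of $f_R(M,N)$ does not come for free from \ref{cor6.2}.

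The paper supplies $f_R(M,N)<\infty$ by a genuinely different device: induction on $\dim R$. When $\dim R=0$, every module has finite length, so $f_R(M,N)=0$ automatically and the $\eta$-argument applies. When $\dim R>0$ and $\Tor_i^R(M,N)=0$ for $n \leq i \leq n+r-1$, localize at each $p$ in the punctured spectrum: $\dim R_p < \dim R$ and $\cx_{R_p}(M_p) \leq \cx_R(M) \leq r-1$, so by the inductive hypothesis $(M_p,N_p)$ is $r$-rigid over $R_p$, hence $\Tor_i^{R_p}(M_p,N_p)=0$ for all $i\geq n$ and all such $p$. Therefore $\Tor_i^R(M,N)$ is supported only at the maximal ideal for $i\geq n$, i.e.\ has finite length, which is exactly $f_R(M,N)<\infty$. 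That localization-and-induction step is essential and cannot be replaced by citing \ref{cor6.2} alone.
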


\begin{proof}
Let $N$ be any $R$-module we want to show that $(M,N)$ is
$r$-rigid. We use induction on $\dim R$. If $\dim R=0$, all
modules have finite length, so by \ref{AB3} we have $\tcx(M,N)
\leq \cx(M)$. Thus $\h_r^R(M,N)=0$ and $(M,N)$ is $r$-rigid by
Theorem \ref{rig2}. Assume $\dim R>0$. By induction (through
localizing at all primes $p \in Y(R)$) we have $f_R(M,N)<\infty$
so we can apply \ref{AB3} and \ref{rig2} again to show that
$\h_r^R(M,N)=0$.
\end{proof}

In general, when a powerful rigidity result is not present, we have
to be content with ``being rigid after certain point". We will say
that $(M,N)$ is \textit{(c,n)-rigid} if  $\Tor_i^R(M,N)=0$ for
$N\leq i\leq N+c-1$ with $N>n$ forces the vanishing of all
$\Tor_i^R(M,N)$ for $i \geq N$. The module $M$ is called
\textit{(c,n)-rigid} if $(M,N)$ is \textit{(c,n)-rigid} for all
finitely generated $N$.

\begin{lem}\label{rig3.1}
Let $Q$ be a local ring with $f$ a nonzerodivisor on $Q$. Let $R
=Q/(f)$ and $M,N$ be $R$-modules such that $f_R(M,N)<\infty$ and
$\pd_Q M<\infty$ (so $\h_1^R(M,N$ can be defined by \ref{beta2}). If
$\h_1^R(M,N)=0$ then $(M,N)$ is $(1,\depth R -\depth M)$-rigid.
\end{lem}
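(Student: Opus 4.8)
The plan is to reduce the statement to rigidity over the hypersurface $R=Q/(f)$ by using the long exact sequence for change of rings (\ref{longexact}) together with the change-of-rings formula for $\h$ in \ref{beta2}(3), and then invoke Proposition \ref{rg1}. First I would fix the notation: let $s=\depth R-\depth M$ and suppose $\Tor_N^R(M,N)=0$ for some $N>n:=s$; I want to conclude $\Tor_i^R(M,N)=0$ for all $i\ge N$. The role of the hypothesis $N>s$ should be to guarantee, via the depth argument below, that the corresponding higher $\Tor^Q$ modules vanish once we are past spot $N$, so the passage back and forth between $R$ and $Q$ in Corollary \ref{cor6.2} works cleanly.

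The first substantive step is to relate the hypothesis $\h_1^R(M,N)=0$ to $\theta$-type data. Since $\pd_Q M<\infty$ and $f_R(M,N)<\infty$, Theorem \ref{beta2}(3) (with $R'=Q$, $r=1$, $e=1$) gives $\h_1^R(M,N)=\tfrac12\h_0^Q(M,N)=\tfrac12\chi^Q(M,N)$, so the hypothesis says $\chi^Q(M,N)=0$; equivalently (by Remark \ref{SHG}) $\theta^R(M,N)=0$. Now I want to extract rigidity. By Proposition \ref{rg1} applied to the hypersurface $R$, the vanishing $\theta^R(M,N)=0$ already implies $(M,N)$ is rigid as an $R$-module — so $\Tor_N^R(M,N)=0$ forces $\Tor_i^R(M,N)=0$ for all $i\ge N$, with no restriction on $N$ at all. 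This suggests the stated bound $N>\depth R-\depth M$ is a harmless (possibly vestigial) hypothesis, and the cleanest proof is essentially: ``$\h_1^R(M,N)=0\Rightarrow\theta^R(M,N)=0\Rightarrow(M,N)$ rigid by \ref{rg1}, hence a fortiori $(1,s)$-rigid.''

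To be safe, I would also present the alternative route that genuinely uses the depth bound, in case one does not want to appeal to the full strength of \ref{rg1} in this ambient generality: assume $\Tor_N^R(M,N)=0$ with $N>s$. Since $\pd_Q M<\infty$, one has $\pd_Q M=\depth Q-\depth M=(\depth R+1)-\depth M=s+1$, so $\Tor_i^Q(M,N)=0$ for $i>s+1$, in particular for all $i\ge N$. By Corollary \ref{cor6.2}(1) this yields $\Tor_i^R(M,N)\cong\Tor_{i+2}^R(M,N)$ for all $i\ge N-1$, so $\Tor^R_*(M,N)$ is $2$-periodic from spot $N-1$ on. Combining $\Tor_N^R(M,N)=0$ with periodicity kills every $\Tor_i^R(M,N)$ of the same parity for $i\ge N$; to kill the other parity I use $\theta^R(M,N)=0$, which in the periodic range reads $\ell(\Tor_{i+1}^R(M,N))=\ell(\Tor_i^R(M,N))$ for $i\gg0$, forcing $\Tor_{i+1}^R(M,N)=0$ as well. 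Hence $\Tor_i^R(M,N)=0$ for all $i\ge N$, as desired.

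The main obstacle is bookkeeping rather than depth: one must be careful that $\theta^R(M,N)$ is actually defined (guaranteed by $f_R(M,N)<\infty$) and that the change-of-rings identity $\h_1^R=\tfrac12\h_0^Q$ applies, which needs precisely $\pd_Q M<\infty$ so that $\Tor_i^Q(M,N)=0$ for $i\gg0$ and $\chi^Q(M,N)$ makes sense. A minor subtlety is making sure the equality ``$\theta^R(M,N)=0$ implies equal lengths of consecutive high $\Tor$'s'' is invoked only in the range where $2$-periodicity holds, i.e.\ for $i\ge\max\{N-1,d+1\}$; since we are free to enlarge $N$ within its allowed range this is not a real constraint. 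I do not expect any serious difficulty beyond these routine checks.
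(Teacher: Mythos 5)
Your primary route via Proposition \ref{rg1} does not apply in the generality of this lemma. That proposition requires $R$ to be an \emph{admissible} hypersurface, meaning $\hat R\cong Q/(f)$ with $Q$ a power series ring over a field or a DVR; here $Q$ is merely assumed to be a local ring with $f$ a nonzerodivisor, so $R$ need not be admissible and \ref{rg1} cannot be invoked. The function $\theta^R$ is likewise defined only for hypersurfaces (i.e.\ $Q$ regular), so the identification $2\h_1^R=\theta^R$ from Remark \ref{SHG} is not available either. Moreover, the change-of-rings identity $\h_1^R=\tfrac12\h_0^{Q}$ that you extract from Theorem \ref{beta2}(3) is only proved there for $r=1$, $e=1$ under the additional hypothesis $\ell(M\tensor_R N)<\infty$, whereas the lemma assumes only $f_R(M,N)<\infty$. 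Consequently the claim that the bound $N>\depth R-\depth M$ is ``vestigial'' and that $(M,N)$ is outright rigid is not justified; the lemma carries the depth bound precisely because for a general local $Q$ the $2$-periodicity of $\Tor^R$ is only guaranteed beyond spot $\depth R-\depth M$, and there is no general rigidity theorem to fall back on below that threshold.

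Your alternative route is essentially the paper's proof and is correct. Since $\pd_Q M=\depth Q-\depth M$, one gets $\Tor_i^Q(M,N)=0$ for $i>\depth Q-\depth M$; Corollary \ref{cor6.2}(1) then gives $\Tor_i^R(M,N)\cong\Tor_{i+2}^R(M,N)$ for $i>\depth R-\depth M$ (note the periodicity starts there, not at $i\ge N-1$ as you wrote, though this does not affect the argument since $N>\depth R-\depth M$); and $\h_1^R(M,N)=0$ — argued directly from the definition of $\h_1$, not through $\theta^R$ — forces $\ell(\Tor_i^R(M,N))=\ell(\Tor_{i+1}^R(M,N))$ in this stable range. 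A single vanishing $\Tor_N^R(M,N)$ with $N>\depth R-\depth M$ then kills all $\Tor_i^R(M,N)$ for $i\ge N$. If you drop the detour through $\theta^R$ and \ref{rg1}, what remains is the intended proof.
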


\begin{proof}
Since $\pd_Q M<\infty$ we have $\Tor_i^R(M,N)=0$ for $i> \depth Q-
\depth M$. By \ref{cor6.2} we have $\ell(\Tor_i^R(M,N)) =
\ell(\Tor_{i+2}^R(M,N))$ for all $i> \depth Q- \depth M-1 = \depth
R-\depth M$. But then the condition $\h_1^R(M,N)=0$ forces
$\ell(\Tor_i^R(M,N)) = \ell(\Tor_{i+1}^R(M,N))$ for all $i> \depth
R-\depth M$ and the conclusion follows trivially.
\end{proof}

\begin{thm}\label{}
Let $R,Q$ be local rings such that $R = Q/(f_1,..,f_r)$ and
$f_1,...,f_r$ is a regular sequence on $Q$. Let $M,N$ be $R$ modules
such that $f_R(M,N)<\infty$ and $\pd_Q M<\infty$ (so $\h_r^R(M,N$
can be defined by \ref{beta2}). If $\h_r^R(M,N)=0$ then $(M,N)$ is
$(r,\depth R -\depth M)$-rigid.
\end{thm}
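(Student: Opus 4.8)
The plan is to run the same induction on the codimension $r$ that was used in Theorem \ref{rig2}, but now keeping careful track of the index at which rigidity begins, and using Lemma \ref{rig3.1} as the base case instead of Proposition \ref{rg1}. First I would reduce to the case $R = Q/(f_1,\dots,f_r)$ with the $f_i$ a regular sequence on $Q$ and $\pd_Q M < \infty$; note that $\h_r^R(M,N)$ is defined by Theorem \ref{beta2} since $\lcx_R(M,N) \le r$. When $r=1$ this is exactly Lemma \ref{rig3.1}, which gives $(1, \depth R - \depth M)$-rigidity.

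For the inductive step, set $R' = Q/(f_2,\dots,f_r)$, so that $R = R'/(f_1 R')$ and $f_1$ is a nonzerodivisor on $R'$. Suppose $\Tor_i^R(M,N) = 0$ for $N \le i \le N + r - 1$ with $N > \depth R - \depth M$. By Corollary \ref{cor6.2}(2) applied to the presentation $R = R'/(f_1)$, the vanishing of $r$ consecutive $\Tor^R$'s forces $\Tor_{R'}^i(M,N) = 0$ for $N+1 \le i \le N + r - 1$, i.e. $r-1$ consecutive $\Tor^{R'}$'s vanish. I would then apply Theorem \ref{beta2}(3) (change of rings, using $e = r$, $e - 1 = r-1 \ge \lcx_{R'}(M,N)$) to deduce $\h_{r-1}^{R'}(M,N) = 0$, and the induction hypothesis over $R'$ — which is again an admissible complete intersection with $\pd_Q M < \infty$ — to conclude that $(M,N)$ is $(r-1, \depth R' - \depth M)$-rigid over $R'$. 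The remaining point is to check that the starting index $N+1$ is large enough, namely $N + 1 > \depth R' - \depth M$; this follows since $\depth R' = \depth R + 1$ and $N > \depth R - \depth M$. Hence $\Tor_{R'}^i(M,N) = 0$ for all $i \ge N+1$, and then Corollary \ref{cor6.2}(1) (with $Q$ replaced by $R'$, so $\Tor_{R'}^i(M,N) = 0$ for $i \ge N+1$ forces $\Tor_i^R(M,N) \cong \Tor_{i+2}^R(M,N)$ for $i \ge N$) together with the original vanishing at $i=N, N+1$ gives $\Tor_i^R(M,N) = 0$ for all $i \ge N$.

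The step I expect to be the main obstacle is the bookkeeping of the indices — making sure that the shift in depth ($\depth R' = \depth R + 1$) and the shift in the range of vanishing $\Tor$'s ($N \mapsto N+1$ when passing from $R$ to $R'$, via Corollary \ref{cor6.2}(2)) are compatible, so that the hypothesis of the inductive statement over $R'$ is genuinely satisfied with the correct threshold $\depth R' - \depth M$. Everything else is a direct application of results already established: Corollary \ref{cor6.2} for the change-of-rings behavior of $\Tor$ vanishing, Theorem \ref{beta2}(3) for the vanishing of the $\h$-function descending to $R'$, and Theorem \ref{beta2}(1) to legitimize the definition of $\h_r^R(M,N)$ in the first place.
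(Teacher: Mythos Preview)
Your proposal is correct and follows exactly the approach the paper indicates: induction on $r$ with Lemma \ref{rig3.1} as the base case and Theorem \ref{beta2}(3) for the change-of-rings step, mirroring the proof of Theorem \ref{rig2}. You have in fact supplied the details the paper leaves to the reader, including the index bookkeeping ($\depth R' = \depth R + 1$ matching the shift $N \mapsto N+1$) and the use of Corollary \ref{cor6.2}(1) to finish via $2$-periodicity; the only slip is calling $R'$ an ``admissible complete intersection'' when the theorem does not assume $Q$ regular, but this is harmless since all you need is that $R' = Q/(f_2,\dots,f_r)$ with $\pd_Q M < \infty$.
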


\begin{proof}
The proof goes by induction using \ref{rig3.1} as the base case and
\ref{beta2} for the inductive step similarly to the proof of
\ref{rig2}.
\end{proof}

\section{Applications to tensor products over complete intersections}~\label{appsec}

In this section we aim to prove  results that can be viewed as
generalizations of Auslander's classical theorem that, over a
regular local ring $R$, $M\tensor_RN$ is torsion-free implies that
$\Tor_i^R(M,N)=0$ for $i\geq 1$. This conclusion forces $M,N$ to
satisfy the ``depth formula" :
$$ \depth(M) + \depth(N) = \depth(R) + \depth(M\tensor_RN).$$

An attempt to extend Auslander's result for complete intersections
was made in \cite{HW1} and \cite{HJW}. We first collect some
notation and results. Recall that $X^i(R)$ denotes the set of prime
ideals $p$ of height less than or equal to $i$ in $R$ (since we only
consider complete intersections, this also means $\depth(R_p)\leq
i$).

\textbf{The Condition $(\Se_n)$}

For a non-negative integer $n$, $M$ is said to satisfy $(\Se_n)$
if:
$$ \depth_{R_p}M_p \geq \min\{n,\dim(R_p)\} \ \forall p\in \Spec(R)$$
(The depth of the $0$ module is set to be $\infty$). This
definition was taken from ``Syzygies" (see [page3, EG]).

\textbf{The Pushforward}

Let $R$ be a Gorenstein ring and $M$ a torsion-free (equivalent to
$(\Se_1)$) $R$-module. Consider a short exact sequence : $$0 \to W
\to R^{\lambda} \to M^* \to 0 $$ Here $\lambda $ is the minimal
number of generators for $M^*$. Dualizing this short exact sequence
and noting that $M$ embeds into $M^{**}$ we get an exact sequence:
$$ 0 \to M \to R^{\lambda} \to M_1 \to 0$$
This exact sequence is called the \textit{pushforward} of $M$.

We record a result on pushforward in \cite{HJW} below for the
reader's convenience. Note that since their definition of $(\Se_n)$
contains some inconsistency with the literature, some minor details
need to be fixed. See \cite{HW3} for details, and also proof of
\ref{vanishingprop} below.

\begin{prop}\label{pushforward}(\cite{HJW}, 1.6)
Let $R,M,M_1$ as above. Then for any $p\in \Spec(R)$:\\
(1) $M_p$ is free if and only if $(M_1)_p$ is free. \\
(2) If $M_p$ is a maximal Cohen-Macaulay $R_p$-module, then so is $(M_1)_p$.\\
(3) $\depth_{R_p}(M_1)_p \geq \depth_{R_p}M_p -1$.\\
(4) If $M$ satisfies $(\Se_k)$, then $M_1$ satisfies
$(\Se_{k-1})$.
\end{prop}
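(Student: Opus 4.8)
The plan is to deduce all four statements from two short exact sequences — the defining one $0\to W\to R^{\lambda}\to M^{*}\to 0$ and its $\Hom_R(-,R)$-dual — after localizing at an arbitrary prime $p$; all the constructions involved commute with localization, and $R_p$ is again Gorenstein. Dualizing the defining sequence produces the four-term exact sequence
\[ 0\to M^{**}\to R^{\lambda}\to W^{*}\to \Ext^1_R(M^{*},R)\to 0, \]
and the map $M\to R^{\lambda}$ used to form the pushforward is the composite $M\hookrightarrow M^{**}\hookrightarrow R^{\lambda}$, so $M_1=R^{\lambda}/M$ while $R^{\lambda}/M^{**}$ is a submodule of $W^{*}$. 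The key structural observation is: \emph{whenever $M_p$ is reflexive and $\Ext^1_{R_p}(M_p^{*},R_p)=0$, there is an isomorphism $(M_1)_p\cong W_p^{*}$}, which is immediate from the localized dual sequence.

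Part (3) is the depth lemma applied to $0\to M_p\to R_p^{\lambda}\to (M_1)_p\to 0$: it gives $\depth (M_1)_p\ge\min\{\depth M_p-1,\ \depth R_p\}$, and since $R$ Gorenstein makes $R_p$ Cohen--Macaulay, one has $\depth M_p\le\dim R_p\le\depth R_p$, so the minimum equals $\depth M_p-1$ except in degenerate cases ($M_p=0$, or $\dim R_p$ small), handled with the usual depth conventions as in \cite{HW3}. For part (2), if $M_p$ is maximal Cohen--Macaulay then, $R_p$ being Gorenstein local, $M_p$ is reflexive, $M_p^{*}$ is maximal Cohen--Macaulay, and $\Ext^i_{R_p}(M_p^{*},R_p)=0$ for $i>0$; the observation above then yields $(M_1)_p\cong W_p^{*}$, while the depth lemma applied to $0\to W_p\to R_p^{\lambda}\to M_p^{*}\to 0$ shows $W_p$ is maximal Cohen--Macaulay, hence so is $W_p^{*}\cong(M_1)_p$. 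For part (1): if $(M_1)_p$ is free, the sequence $0\to M_p\to R_p^{\lambda}\to(M_1)_p\to 0$ splits, so $M_p$ is a direct summand of a free module and hence free; conversely, if $M_p$ is free it is maximal Cohen--Macaulay, so $(M_1)_p\cong W_p^{*}$ by the computation in (2), and $W_p$ is free (a direct summand of $R_p^{\lambda}$, since $R_p^{\lambda}\twoheadrightarrow M_p^{*}$ splits), whence $(M_1)_p$ is free.

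Part (4) is then formal from (2) and (3): for $p$ with $\dim R_p\le k$, the condition $(\Se_k)$ forces $M_p$ to be maximal Cohen--Macaulay (or zero), so (2) gives $(M_1)_p$ maximal Cohen--Macaulay with $\depth (M_1)_p=\dim R_p\ge\min\{k-1,\dim R_p\}$; for $p$ with $\dim R_p\ge k$, $(\Se_k)$ gives $\depth M_p\ge k$, so (3) gives $\depth (M_1)_p\ge k-1=\min\{k-1,\dim R_p\}$.

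The step I expect to be the main obstacle is pinning down the identification $(M_1)_p\cong W_p^{*}$ cleanly: the global presentation is chosen with $\lambda=\mu(M^{*})$, but minimality of that surjection — and even the precise shape of $W$ — is not preserved under localization, so the isomorphism must be extracted entirely from the four-term dual sequence and the reflexivity of $M_p$, not from a minimal-resolution picture. The only other genuine friction is the depth/dimension edge-case bookkeeping in (3) and (4), which is exactly the place where the author's caveat about the definition of $(\Se_n)$ and the reference to \cite{HW3} (compare also the proof of \ref{vanishingprop}) enter.
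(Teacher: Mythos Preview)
The paper does not supply its own proof of this proposition: it is simply recorded from \cite{HJW}, Proposition 1.6, with the caveat that the definition of $(\Se_n)$ there needs adjustment as in \cite{HW3}. Your argument is correct and is the standard one; in particular your key identification $(M_1)_p\cong W_p^{*}$ whenever $M_p$ is reflexive and $\Ext^1_{R_p}(M_p^{*},R_p)=0$, extracted from the localized four-term dual sequence rather than from any minimality assumption, is exactly the right way to handle the fact that $\lambda=\mu(M^{*})$ is fixed globally.
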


\begin{prop}\label{vanishingprop}(\cite{HJW}, 2.1)
Let $R$ be a Gorenstein ring, let $c\geq1$ and let $M,N$ be $R$-modules such that:\\
(1) $M$ satisfies $(\Se_c)$.\\
(2) $N$ satisfies $(\Se_{c-1})$.\\
(3) $M\tensor_RN$ satisfies $(\Se_c)$.\\
(4) $M_p$ is free for each $p\in X^{c-1}(R)$.\\
Put $M_0:=M$ and, for $i = 1,...,c$, let
$$  0\to M_{i-1} \to F_i \to M_i \to 0 $$
be the pushforward. Then $\Tor_i^R(M_c,N) = 0$ for $i=1,...,c$.
\end{prop}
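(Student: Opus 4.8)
The plan is to prove $\Tor_i^R(M_c, N) = 0$ for $i = 1, \dots, c$ by induction on $c$, using the pushforward construction together with the long exact sequences of $\Tor$ coming from the short exact sequences $0 \to M_{i-1} \to F_i \to M_i \to 0$. First I would record the localization-theoretic content: by hypothesis (4), $M_p$ is free for all $p \in X^{c-1}(R)$, and by Proposition \ref{pushforward}(1) this propagates to each $M_i$, so each short exact sequence in the tower splits after localizing at any $p \in X^{c-1}(R)$; in particular $f_R(\Tor_*^R(M_i,N))$ is controlled and the relevant $\Tor$ modules have finite length over the appropriate localizations. The base case $c = 1$ should reduce to the classical statement: $M$ is torsion-free (satisfies $(\Se_1)$), $M$ is free in codimension $0$, and $M \otimes_R N$ satisfies $(\Se_1)$; dualizing the defining sequence $0 \to W \to R^\lambda \to M^* \to 0$ and tensoring with $N$, one sees $\Tor_1^R(M_1, N)$ is a torsion module that embeds (via the depth/torsion-free hypothesis on $M \otimes_R N$) into something torsion-free, hence vanishes.

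For the inductive step I would set up the tower $M_0 = M, M_1, \dots, M_c$ and use Proposition \ref{pushforward}(3),(4) to check that $M_1$ satisfies $(\Se_{c-1})$, that $N$ satisfies $(\Se_{c-2})$, and that $(M_1)_p$ is free for $p \in X^{c-2}(R)$ — so the hypotheses of the Proposition hold for the pair $(M_1, N)$ with $c$ replaced by $c-1$, \emph{provided} we also know $M_1 \otimes_R N$ satisfies $(\Se_{c-1})$. Granting that for the moment, the induction gives $\Tor_i^R(M_c, N) = 0$ for $i = 1, \dots, c-1$ (note the tower for $M_1$ is exactly $M_1, \dots, M_c$, shifted). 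It then remains to handle $i = c$: from the short exact sequence $0 \to M \to F_1 \to M_1 \to 0$ tensored with $N$, we get $\Tor_c^R(M_1, N) \cong \Tor_{c-1}^R(M, N)$ together with the tail involving $\Tor_1$, and one climbs the tower of long exact sequences, using at each stage that the lower $\Tor$'s already vanish, to reduce $\Tor_c^R(M_c, N)$ to a subquotient of $\Tor_1^R(M, N)$ or to a torsion submodule of a module forced to be torsion-free by the $(\Se_c)$ hypothesis on $M \otimes_R N$; the depth inequality $\depth_{R_p}(M_i \otimes_R N)_p \geq 1$ for small primes is what kills it.

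The main obstacle, as the remark in the excerpt about fixing $(\Se_n)$ conventions already signals, is establishing that $M \otimes_R N$ satisfying $(\Se_c)$ forces $M_1 \otimes_R N$ to satisfy $(\Se_{c-1})$ — i.e., that the Serre condition on the tensor product descends along the pushforward. This is not formal: tensoring $0 \to M \to F_1 \to M_1 \to 0$ with $N$ does not generally give a short exact sequence, since $\Tor_1^R(M_1, N)$ can be nonzero, so one cannot simply quote Proposition \ref{pushforward}(4). The fix is a local depth-chase: for each prime $p$, either $M_p$ is free (handled by \ref{pushforward}(1), making the sequence split locally and the tensor product behave well) or $\operatorname{ht} p \geq c$, in which case one uses the depth lemma on the four-term exact sequence $0 \to \Tor_1^R(M_1,N) \to M \otimes N \to F_1 \otimes N \to M_1 \otimes N \to 0$, combined with a depth bound on $\Tor_1^R(M_1,N)$ coming from the fact that it is supported only where $M$ is not free (codimension $\geq c$), to conclude $\depth_{R_p}(M_1 \otimes N)_p \geq \min\{c-1, \dim R_p\}$. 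I would carry this auxiliary claim out first as a lemma, since it is the technical heart, and refer to \cite{HW3} for the precise bookkeeping on the Serre-condition conventions, exactly as the surrounding text does.
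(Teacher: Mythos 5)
Your proposal correctly reconstructs the argument of \cite{HJW}, Theorem 2.1, which is exactly what the paper cites for this proposition (adding only the one-line \cite{HW3} correction of localizing at $p\in\Supp(M_i\otimes_RN)$ rather than at arbitrary primes), and your identification of the descent of the Serre condition along the pushforward as the technical heart of the matter matches the paper's remark precisely. One simplification worth noting: in the inductive step you need not chase depths through a four-term sequence with $\Tor_1^R(M_{j+1},N)$ possibly nonzero. Since $(M_j)_p$ is free for $p\in X^{c-1}(R)$ by Proposition \ref{pushforward}(1), the module $\Tor_1^R(M_{j+1},N)$ is supported only in codimension $\geq c$; on the other hand it embeds into $M_j\otimes_RN$, which by the inductive Serre condition $(\Se_{c-j})$ with $c-j\geq 1$ has all associated primes of height $0$. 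Hence $\Tor_1^R(M_{j+1},N)=0$ outright, the four-term sequence collapses to the short exact sequence $0\to M_j\otimes_RN\to F_{j+1}\otimes_RN\to M_{j+1}\otimes_RN\to 0$, and the depth-chase you describe becomes a routine application of the depth lemma at each prime of $\Supp(M_{j+1}\otimes_RN)$.
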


\begin{proof}
In view of \cite{HW3}, the proof in \cite{HJW} only needs one more
line. In the inductive step, instead of looking at any $p\in
\Spec(R)$, we localize at $p\in \Supp(M_i\tensor_RN)$ (which means
$p$ is also in $\Supp(M_i),\Supp(N)$).

\end{proof}

For the reader's conveniences we collect below what have been done
to extend Auslander's theorem mentioned at the beginning of this
section.
\begin{thm}
(\cite{HW1}, 2.7) Let $R$ be a hypersurface and $M,N$ be finitely
generated $R$-modules, one of which has constant rank (on the
associated primes of $R$). If $M\tensor_RN$ is reflexive, then
$\Tor_i^R(M,N)=0$ for $i\geq 1$ .
\end{thm}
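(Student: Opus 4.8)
The plan is to deduce this theorem from Proposition \ref{vanishingprop} applied with $c=2$, following the strategy that over a hypersurface one expects two consecutive pushforwards to suffice to kill the relevant $\Tor$'s, together with a rigidity argument to push the vanishing all the way down to $M$ and $N$ themselves. First I would reduce to the case where $M$ has constant rank: if $N$ has constant rank instead, swap the roles, and if the rank is zero the hypothesis that $M\tensor_R N$ is reflexive (hence torsion-free) forces $M=0$ or $N=0$ after localizing at minimal primes, so the statement is trivial. Assuming $M$ has positive constant rank, I would check that $M$ is torsion-free, hence satisfies $(\Se_1)$, and use that reflexivity of $M\tensor_R N$ gives $(\Se_2)$ for the tensor product. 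The condition (4) of \ref{vanishingprop} for $c=2$ requires $M_p$ free for $p\in X^1(R)$; this is where constant rank enters, since over a hypersurface $R_p$ of dimension $\leq 1$, a torsion-free module of constant rank is free (localize further to the regular local ring $R_{\mathfrak q}$ for minimal $\mathfrak q$, and at a height-one prime use that $R_p$ is a DVR or regular, where torsion-free of constant rank implies free).

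Next I would verify condition (1) and (2): $M$ satisfies $(\Se_2)$ and $N$ satisfies $(\Se_1)$. Actually for $c=2$ we need $M$ to satisfy $(\Se_2)$, which may require replacing $M$ by a pushforward first or arguing via reflexivity; I would use that a torsion-free module with $M_p$ free on $X^1(R)$ together with the tensor product being reflexive propagates depth via the standard depth-counting along the tensor product, so one gets $(\Se_2)$ on $M$ after the first pushforward. The cleaner route: apply \ref{vanishingprop} with $c=2$ to get $\Tor_1^R(M_2,N)=\Tor_2^R(M_2,N)=0$, where $M_2$ is obtained from $M=M_0$ by two pushforwards $0\to M_{i-1}\to F_i\to M_i\to 0$. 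Then I would run the long exact sequences of $\Tor$ backwards: from $0\to M_1\to F_2\to M_2\to 0$ and $\Tor_{\geq 1}^R(M_2,N)=0$ in low degrees one gets $\Tor_i^R(M_1,N)=0$ for $i=1$, and similarly descend to $M_0=M$. The key leverage is that over a hypersurface $\Tor^R(M,N)$ is eventually periodic of period $2$, so vanishing of two consecutive $\Tor$'s — guaranteed here by the two pushforwards — forces vanishing of all higher $\Tor$'s; this is exactly the rigidity phenomenon, and combined with the pushforward long exact sequences it collapses everything to $\Tor_i^R(M,N)=0$ for all $i\geq 1$.

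The main obstacle I anticipate is the bookkeeping of the $(\Se_n)$ conditions and making sure the hypotheses of \ref{vanishingprop} are genuinely met with the weaker ``constant rank plus reflexive'' assumption rather than the stronger serre-type conditions stated there; in particular, establishing that $M$ itself (before pushforward) satisfies $(\Se_2)$ is not automatic, so I would instead only claim $(\Se_1)$ for $M$, apply the proposition with a shifted count or first replace $M$ by $M_1$ (which by \ref{pushforward}(4) drops $(\Se_k)$ to $(\Se_{k-1})$, the wrong direction) — so the honest fix is to observe that reflexivity of $M\tensor_R N$ plus $M$ torsion-free of constant rank already forces $M_p$ free on $X^1(R)$ and then the depth formula on the tensor product, via the Auslander–Buchsbaum-type counting, upgrades $M$ to $(\Se_2)$ on the relevant support. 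The second delicate point is handling primes not in the support of $M\tensor_R N$, which is precisely the one-line fix noted in the proof of \ref{vanishingprop}: localize only at $p\in\Supp(M\tensor_R N)$ so that $p\in\Supp M\cap\Supp N$, keeping the rank and freeness hypotheses meaningful. Once these are in place, the periodicity/rigidity step and the descent through pushforwards are routine.
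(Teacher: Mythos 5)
The paper never actually proves this result; it is quoted verbatim from Huneke--Wiegand as one of several background theorems preceding the genuinely new Theorem~\ref{vanishingcomplete}, so there is no internal proof to compare against.

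Taken on its own terms, your argument has a circular step at its core. To invoke Proposition~\ref{vanishingprop} with $c=2$ you need $M$ to satisfy $(\Se_2)$, and before that you need $M$ to be torsion-free so that the pushforward even exists. Neither is among the hypotheses here --- you are given only that $M\tensor_R N$ is reflexive and that one of the two modules has constant rank --- and in fact reflexivity of $M$ and $N$ is part of the \emph{conclusion} of Huneke--Wiegand's theorem, not a preliminary you may assume. Your proposed repair, to ``upgrade $M$ to $(\Se_2)$ via the depth formula on the tensor product,'' begs the question: the depth formula $\depth M + \depth N = \depth R + \depth(M\tensor_R N)$ is itself a consequence of $\Tor_i^R(M,N)=0$ for $i>0$, which is exactly what you are trying to prove. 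There is also a structural mismatch worth flagging: the Serre-condition hypotheses of \cite{HJW}, Proposition 2.1 are strictly stronger than the ``constant rank plus reflexive tensor'' hypotheses of \cite{HW1}, 2.7, so no formal specialization of \ref{vanishingprop} at $c=1$ or $c=2$ can subsume it; Huneke--Wiegand instead establish torsion-freeness and the initial vanishing of $\Tor_1$ and $\Tor_2$ directly from local depth counting and only then bring in pushforwards and rigidity. A smaller inaccuracy: eventual period-$2$ behavior of $\Tor$ over a hypersurface begins past degree $\dim R + 1$, so ``two consecutive vanishing $\Tor$'s in degrees $1,2$ force all higher ones to vanish'' requires the Murthy/Lichtenbaum argument through the change-of-rings sequence \ref{longexact} and $\Tor$-rigidity over the ambient regular ring, not raw periodicity --- fixable by a citation, but the $(\Se_2)$ gap above is the real obstruction.
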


\begin{thm}
(\cite{HJW}, 2.4). Let $R$ be a codimension 2 complete
intersection and $M,N$ be finitely generated
$R$-modules. Assume:\\
(1) $M$ is free of constant rank on $X^1(R)$.\\
(2) $N$ is free of constant rank on $X^0(R)$.\\
(3) $M,N$ satisfy $(\Se_2)$.\\
If $M\tensor_RN$ satisfies $(\Se_3)$, then $\Tor_i^R(M,N)=0$ for
$i\geq 1$.
\end{thm}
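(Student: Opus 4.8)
The plan is to reduce, by means of the pushforward construction of this section, to a pair of modules with two consecutive vanishing $\Tor$'s, and then to feed this into the rigidity result \ref{rig2}. After passing to the completion I write $R=Q/(f_1,f_2)$ with $Q$ regular local and set $R_1=Q/(f_1)$, a hypersurface. The first step is to apply Proposition~\ref{vanishingprop} with $c=2$: its four hypotheses hold here, because $M$ satisfies $(\Se_2)$ (hypothesis (3)); $N$, being $(\Se_2)$, satisfies $(\Se_1)$; $M\tensor_RN$, being $(\Se_3)$, satisfies $(\Se_2)$; $M_p$ is free for $p\in X^1(R)$ (hypothesis (1)); and $R$, a complete intersection, is Gorenstein. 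Writing $M_0=M$ and taking the successive pushforwards $0\to M_{i-1}\to F_i\to M_i\to 0$ ($i=1,2$, with $F_i$ free), Proposition~\ref{vanishingprop} yields $\Tor_1^R(M_2,N)=\Tor_2^R(M_2,N)=0$. The long exact sequences of $\Tor(-,N)$ along these two short exact sequences, combined with $\Tor_{\geq1}^R(F_i,N)=0$, give isomorphisms $\Tor_{i+2}^R(M_2,N)\cong\Tor_i^R(M,N)$ for every $i\geq 1$; so it is enough to prove $\Tor_j^R(M_2,N)=0$ for all $j\geq 1$, i.e. to propagate the vanishings in degrees $1$ and $2$ to all higher degrees.

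By \ref{pushforward}(1), $M_2$ is still free on $X^1(R)$, and $M$ (hence $M_2$) has a rank, since both $M$ and $N$ do. Before invoking \ref{rig2} I would check that $\Tor_i^R(M,N)$ --- and therefore, via the isomorphisms above, $\Tor_i^R(M_2,N)$ --- has finite length for $i\geq 1$, so that $f_R(M,N)<\infty$. I would do this by induction on $\dim R$: for a non-maximal prime $p$, the ring $R_p$ is a complete intersection of codimension $\leq 2$, all four hypotheses localize to $(M_p,N_p)$ over $R_p$, and if $\cod R_p\leq 1$ one gets $\Tor_i^{R_p}(M_p,N_p)=0$ for $i\geq 1$ by Auslander's classical theorem ($Q_p$ regular) or by the hypersurface case (\cite{HW1},~2.7) (using that $(M\tensor_RN)_p$ is $(\Se_2)$ with a rank, hence reflexive over the Gorenstein $R_p$), while if $\cod R_p=2$ then $\dim R_p<\dim R$ and the inductive hypothesis applies. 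Hence the higher $\Tor^R(M,N)$ vanish on the punctured spectrum and so have finite length; the base cases $\dim R\leq 2$ are immediate, the punctured spectrum then lying inside $X^1(R)$, where $M$ is free. The same argument with the change-of-rings sequence \ref{longexact} (applied to $R=R_1/(f_2)$) shows that $\Tor_i^{R_1}(M,N)$ has finite length for $i\geq 2$, so $f_{R_1}(M,N)<\infty$ too.

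Since $f_R(M_2,N)<\infty$, Theorem~\ref{rig2} shows that $(M_2,N)$ is $2$-rigid as soon as $\h_2^R(M_2,N)=0$, and then the vanishings in degrees $1$ and $2$ force $\Tor_j^R(M_2,N)=0$ for all $j\geq1$, which completes the proof. By biadditivity of $\h_2$ (\ref{beta2}(2), which applies because $\lcx(-,N)\leq 2$ always and all the relevant $f_R$'s are finite) along the two pushforward sequences, and because $\h_2$ vanishes on free modules, $\h_2^R(M_2,N)=\h_2^R(M,N)$; and the change-of-rings formula \ref{beta2}(3) (with $R'=R_1$, noting $\lcx_{R_1}\leq 1$) together with Remark~\ref{SHG} gives $\h_2^R(M,N)=\tfrac14\,\h_1^{R_1}(M,N)=\tfrac18\,\theta^{R_1}(M,N)$. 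So everything reduces to showing $\theta^{R_1}(M,N)=0$, and this is the main obstacle, requiring the full force of the hypotheses. The point is that they imply $M$ has projective dimension $\leq 1$ over $(R_1)_p$ for every prime $p$ of height $\leq 2$, and that $M\tensor_RN$ has good depth; the natural route is to use biadditivity of $\theta^{R_1}$ and general hyperplane sections to cut $M\tensor_RN$ down to a finite-length module --- at which point $\theta^{R_1}$ equals Serre's multiplicity $\chi^Q$, which vanishes by the intersection-multiplicity vanishing theorem of Serre and Roberts over the (excellent) regular ring $Q$ (\cite{Ro}). Carrying out this last reduction carefully --- in particular keeping track of depths through the pushforwards and the hyperplane sections --- is where the real work lies.
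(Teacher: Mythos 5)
The paper does not actually prove this theorem --- it is quoted verbatim from \cite{HJW} as background for the more general Theorem~\ref{vanishingcomplete}, which is what the paper proves. So there is no paper's-own-proof to compare line by line, but the proof of \ref{vanishingcomplete} shows a route that \emph{does} work, and the comparison reveals a real gap in yours.

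Your argument reduces the whole theorem to showing $\theta^{R_1}(M,N)=0$, and you then stop, acknowledging that ``carrying out this last reduction carefully \ldots\ is where the real work lies.'' That last step is not a loose end; it is the entire content of the theorem, and the route you sketch (biadditivity of $\theta^{R_1}$ plus hyperplane sections to get down to a finite-length module where $\theta^{R_1}=\chi^Q$) does not, as written, prove what you need. Biadditivity along $0\to N\xrightarrow{\ t\ }N\to N/tN\to 0$ gives $\theta^{R_1}(M,N/tN)=\theta^{R_1}(M,N)-\theta^{R_1}(M,N)=0$, i.e.\ the vanishing for the \emph{cut-down} module, not for $N$ itself; the step from there back to $\theta^{R_1}(M,N)=0$ is exactly what is missing. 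There are also secondary obstructions you gloss over: the pushforwards $M_i$ are constructed over $R$, not $R_1$, so your claim that $M$ has $\pd\le 1$ over $(R_1)_p$ for $p\in X^2$ is not something the hypotheses hand you directly, and the Serre--Roberts invocation needs a verified dimension inequality $\dim M+\dim N'<\dim Q$.

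The paper's Lemma~\ref{vanishinglemcomplete} sidesteps all of this. Instead of trying to evaluate $\h_c^R(M_c,N)$, it picks $t$ a nonzerodivisor on both $N$ and $M_c\tensor_RN$ (available because the depth hypotheses are pushed through the pushforward sequences by the depth lemma), sets $\bar N=N/tN$, and \emph{by biadditivity alone} concludes $\h_c^R(M_c,\bar N)=0$, with zero intersection-theoretic input. Since $t$ is regular on $M_c\tensor_RN$ one also gets $\Tor_1^R(M_c,\bar N)=0$, which with your vanishings $\Tor_i^R(M_c,N)=0$ for $1\le i\le c$ gives $\Tor_i^R(M_c,\bar N)=0$ for $1\le i\le c$. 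Now Theorem~\ref{rig2} is applied to the pair $(M_c,\bar N)$ --- not to $(M_c,N)$ as you do --- yielding $\Tor_i^R(M_c,\bar N)=0$ for all $i\ge 1$, and Nakayama (since the $\Tor_i^R(M_c,N)$ have finite length) then forces $\Tor_i^R(M_c,N)=0$ for $i\ge 1$. Everything else in your proposal --- the pushforward setup, the dimension-shift isomorphisms $\Tor_{i+2}^R(M_2,N)\cong\Tor_i^R(M,N)$, the induction on $\dim R$ to get $f_R(M,N)<\infty$ --- is sound and in the spirit of the paper. The single error is that you aimed the rigidity theorem at $(M_2,N)$, which obliges you to compute an $\h$-value, rather than at $(M_2,N/tN)$, where the $\h$-vanishing is automatic.
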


\begin{thm}
(\cite{HJW}, 2.8). Let $R$ be a codimension 3 admissible complete
intersection and $M,N$ be finitely generated
$R$-modules. Assume:\\
(1) $M$ is free of constant rank on $X^2(R)$.\\
(2) $N$ is free of constant rank $r$ on $X^1(R)$ and
$(\wedge^rN)^{**}\cong R$.(Such a module
$N$ is called orientable).\\
(3) $M,N$ satisfy $(\Se_3)$.\\
If $M\tensor_RN$ satisfies $(\Se_4)$, then $\Tor_i^R(M,N)=0$ for
$i\geq 1$.
\end{thm}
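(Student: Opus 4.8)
The plan is to combine the pushforward technique of Proposition \ref{vanishingprop} with the rigidity criterion of Theorem \ref{rig2}, after first reducing, by induction on $\dim R$, to the case where the higher $\Tor$ modules have finite length. Write $R=Q/(f_1,f_2,f_3)$ with $Q$ regular. For the reduction: if $\dim R=0$ then $X^2(R)=\{m\}$, so hypothesis (1) forces $M$ to be free and there is nothing to prove; if $\dim R>0$, localize at a prime $p\neq m$. Then $R_p$ is an admissible complete intersection of codimension $\le 3$ with $\dim R_p<\dim R$, and hypotheses (1)--(3) together with the $(\Se_4)$ condition on $M\tensor_RN$ all localize, since local freeness on the sets $X^i$, the Serre conditions, and orientability $(\wedge^r N)^{**}\cong R$ are preserved under localization. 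By the inductive hypothesis --- or, when $R_p$ has strictly smaller codimension, by Auslander's theorem and the hypersurface and codimension-two cases recalled above --- one gets $\Tor_i^{R_p}(M_p,N_p)=0$ for all $i\ge 1$. Hence $\Tor_i^R(M,N)$ is supported at $m$ for $i\ge 1$; in particular $f_R(M,N)<\infty$, so (since $Q$ is regular) $\h_3^R(M,N)$ is defined by Theorem \ref{beta2}.

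Next I would run the pushforward. Since $M$ satisfies $(\Se_3)$, $N$ satisfies $(\Se_2)$, $M\tensor_RN$ satisfies $(\Se_3)$, and $M$ is free on $X^2(R)$, Proposition \ref{vanishingprop} with $c=3$ applies to the iterated pushforwards $0\to M_{i-1}\to F_i\to M_i\to 0$ and yields $\Tor_i^R(M_3,N)=0$ for $i=1,2,3$. Because the $F_i$ are free, these sequences give $\Tor_j^R(M_i,N)\cong\Tor_{j-1}^R(M_{i-1},N)$ for $j\ge 2$; in particular $f_R(M_3,N)<\infty$, and the vanishing of all higher $\Tor$'s propagates downward, so that $\Tor_j^R(M_3,N)=0$ for all $j\ge 1$ would force $\Tor_j^R(M,N)=0$ for all $j\ge 1$. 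Also, by biadditivity of $\h_3$ (Theorem \ref{beta2}(2), with $\h_3^R(F_i,N)=0$ since the $F_i$ are free) one gets $\h_3^R(M_3,N)=\h_3^R(M,N)$.

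It thus suffices to show $\h_3^R(M,N)=0$: given that, $(M_3,N)$ satisfies the hypotheses of Theorem \ref{rig2} and hence is $3$-rigid, so the three consecutive vanishings $\Tor_1^R(M_3,N)=\Tor_2^R(M_3,N)=\Tor_3^R(M_3,N)=0$ force $\Tor_i^R(M_3,N)=0$ for all $i\ge 1$, and the descent above completes the proof. To evaluate $\h_3^R(M,N)$ I would apply the change of rings formula of Theorem \ref{beta2}(3) twice, together with Remark \ref{SHG}: with $R'=Q/(f_1,f_2)$ and $R_1=Q/(f_1)$ (using $\lcx_{R'}(M,N)\le 2$ and $\lcx_{R_1}(M,N)\le 1$ to license the two applications),
$$\h_3^R(M,N)=\frac{1}{6}\,\h_2^{R'}(M,N)=\frac{1}{24}\,\h_1^{R_1}(M,N)=\frac{1}{48}\,\theta^{R_1}(M,N),$$
so everything comes down to the vanishing of Hochster's theta function over the hypersurface $R_1$. \textbf{The main obstacle} is exactly this vanishing $\theta^{R_1}(M,N)=0$, and it is here that the orientability of $N$ and the freeness of $M$ on $X^2(R)$ and of $N$ on $X^1(R)$ are genuinely needed: they force the classes of $M$ and $N$, modulo free modules, to be supported in codimension $\ge 3$ over $R$ (orientability being what upgrades $N$ from codimension $2$ to codimension $3$), while $\theta^{R_1}$ is biadditive, annihilates modules of finite projective dimension, and by \cite{Da1,Da2} depends only on these Grothendieck-theoretic data and vanishes when they lie in small enough dimension. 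Making this translation precise --- reconciling the Serre conditions, the non-free loci, and the passage between $R$ and $R_1$ with the intersection-theoretic vanishing of $\theta$ --- is the main technical point, and I would model it on the hypersurface arguments of \cite{Da1,Da2}.
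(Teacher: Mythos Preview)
The paper does not prove this theorem; it is quoted from \cite{HJW} as prior work motivating Theorem~\ref{vanishingcomplete}, with no argument supplied. So there is no proof in the paper to compare your proposal against directly.

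Your proposal has a genuine gap at precisely the point you yourself flag as the main obstacle. You reduce everything to $\theta^{R_1}(M,N)=0$ over the hypersurface $R_1=Q/(f_1)$ but do not establish it, and it is unclear how to: the freeness and orientability hypotheses are conditions over $R$, not $R_1$, and the vanishing criteria for $\theta$ in \cite{Da1,Da2} require either that one module's class be zero in $\overline G(R_1)_{\mathbb Q}$ or a dimension inequality, neither of which the stated hypotheses supply. It is worth noting that the paper's proof of the related Theorem~\ref{vanishingcomplete} deliberately sidesteps proving $\h_c^R(M,N)=0$ directly: Lemma~\ref{vanishinglemcomplete} instead passes to $\bar N=N/tN$ for a nonzerodivisor $t$ on both $N$ and $M\tensor_RN$, so that biadditivity gives $\h_c^R(M,\bar N)=\h_c^R(M,N)-\h_c^R(M,N)=0$ automatically, and then applies rigidity to $(M,\bar N)$ together with Nakayama. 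But that trick relies on the stronger hypothesis that $M$ is free on $X^c(R)$; here $M$ is only free on $X^{c-1}(R)=X^2(R)$, which is exactly why \cite{HJW} imposed the extra orientability condition on $N$, and their argument exploits it in a way specific to codimension $3$ rather than through any $\theta$-vanishing.
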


Our aim is to prove the following result:

\begin{thm}\label{vanishingcomplete}
Let $c$ be any integer greater or equal to $1$.  Let $R$ be a
codimension $c$ admissible complete intersection
and $M,N$ be finitely generated $R$-modules. Assume:\\
(1) $M$ is free on $X^c(R)$.\\
(2) $M,N$ satisfy $(\Se_c)$.\\
If $M\tensor_RN$ satisfies $(\Se_{c+1})$, then $\Tor_i^R(M,N)=0$
for $i\geq 1$.
\end{thm}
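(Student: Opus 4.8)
The plan is to combine the pushforward machinery of Proposition \ref{vanishingprop} with the rigidity result \ref{rig2.3} (or its consequences) together with the change-of-rings behavior of $\h$ from Theorem \ref{beta2}, and to argue by induction on $\dim R$. First I would observe that all the hypotheses localize: for $p$ in the punctured spectrum of $R$, the ring $R_p$ is an admissible complete intersection of codimension $\le c$ (in fact of codimension equal to $\min\{c,\dim R_p\}$ in the sense that matters), $M_p$ is free on $X^c(R_p)$, and $M_p, N_p$ and $(M\tensor_RN)_p$ satisfy the appropriate Serre conditions; hence by induction on dimension we may assume $\Tor_i^{R_p}(M_p,N_p) = 0$ for all $i \ge 1$ and all such $p$. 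In particular $\Tor_i^R(M,N)$ has finite length for $i \gg 0$, so $f_R(M,N) < \infty$ and the function $\h_e^R(M,N)$ is defined for $e \ge c$.

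Next I would run the pushforward: by condition (1) the hypothesis (4) of \ref{vanishingprop} holds ($M$ is free on $X^{c-1}(R)$ since it is even free on $X^c(R)$), and conditions (1),(2) there follow from $M$ satisfying $(\Se_c)$ and $N$ satisfying $(\Se_{c-1})$ (weaker than $(\Se_c)$), while (3) follows from $M\tensor_RN$ satisfying $(\Se_{c+1})$, which is stronger than $(\Se_c)$. Forming the pushforwards $0 \to M_{i-1} \to F_i \to M_i \to 0$ for $i=1,\dots,c$, Proposition \ref{vanishingprop} gives $\Tor_i^R(M_c,N) = 0$ for $i = 1,\dots,c$. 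The point of the pushforward is that $M_c$ is now a high syzygy-like module which still agrees with $M$ on the locus where $M$ is free (Proposition \ref{pushforward}(1)), so $M_c \tensor_R N$ and $M \tensor_R N$ have the same support behavior off the singular/non-free locus, and $f_R(M_c,N) < \infty$ as well. By \ref{rig2} (or \ref{rig2.3}) applied to the pair $(M_c, N)$ — one must check that $\h_c^R(M_c,N) = 0$, which should come from the vanishing on the punctured spectrum together with the biadditivity of $\h$ along the pushforward sequences (each $F_i$ is free so contributes nothing) — the $c$ consecutive vanishing $\Tor$'s propagate: $(M_c,N)$ is $c$-rigid, hence $\Tor_i^R(M_c,N) = 0$ for all $i \ge 1$.

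Finally I would descend back from $M_c$ to $M$. Breaking the pushforward sequences and using the long exact sequence of $\Tor$, vanishing of $\Tor_i^R(M_j,N)$ for all $i \ge 1$ pulls back to vanishing of $\Tor_i^R(M_{j-1},N)$ for all $i \ge 1$, since the $F_j$ are free; iterating $c$ times gives $\Tor_i^R(M,N) = 0$ for $i \ge 1$, which is exactly the assertion. The main obstacle I expect is the verification that $\h_c^R(M_c,N) = 0$: one needs that the higher $\Tor$'s of $(M_c,N)$, which are finite length, actually have alternating-sum growth of order $< c$, i.e. $\lcx_R(M_c,N) < c$ or the leading coefficient vanishes. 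This should follow because $\h_c^R(M_c,N)$ can be computed after localizing, or via the change-of-rings formula in Theorem \ref{beta2}(3) reducing to a hypersurface where $\theta$ vanishes by \ref{rg1} once the codimension-one case is established — but pinning down that the biadditivity and change-of-rings hypotheses ($e - 1 \ge \lcx_{R'}$, finiteness of all the relevant $f_R$) are met at each stage is the delicate bookkeeping, and the dimension induction has to be threaded carefully so that the base case $\dim R = 0$ (where all $\Tor$'s have finite length and \ref{rig2.3} or \ref{AB3} applies directly) is genuinely handled.
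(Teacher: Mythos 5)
Your overall skeleton — induction on $\dim R$, pushforward to produce $M_c$ with $\Tor_i^R(M_c,N)=0$ for $1\le i\le c$, invoke a $c$-rigidity theorem, then peel back down to $M$ via the free pushforward sequences — matches the paper's, and the peeling-back step is fine since each $F_j$ is free. The gap is exactly where you flag it: you never actually establish $\h_c^R(M_c,N)=0$, and the two routes you sketch do not work. Biadditivity along the pushforward sequences only gives $\h_c^R(M_c,N)=(-1)^c\h_c^R(M,N)$, which is circular (you'd already need to know $\h_c^R(M,N)=0$, and a priori you don't). ``Vanishing on the punctured spectrum'' gives $f_R(M_c,N)<\infty$ so that $\h_c$ is defined, but $\h_c$ is not a quantity you can compute by localizing. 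And \ref{rg1} is a rigidity statement assuming $\theta=0$, not a vanishing statement for $\theta$, so the hypersurface reduction via \ref{beta2}(3) doesn't close the loop either. Similarly \ref{rig2.3} requires $M\tensor_R N$ to have finite length, which fails here for $\dim R\ge 1$ because $M\tensor_RN$ satisfies $(\Se_{c+1})$.

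The paper avoids this obstruction with a separate key lemma (\ref{vanishinglemcomplete}): instead of computing $\h_c$ of the given pair, it chooses $t$ a nonzerodivisor on both $N$ and $M_c\tensor_R N$ (available because the pushforward plus depth-chasing from $(\Se_{c+1})$ for $M\tensor_R N$ and $(\Se_c)$ for $N$ leaves $\depth(M_c\tensor_R N)\ge 1$ and $\depth N\ge 1$), passes to $\bar N=N/tN$, and uses the multiplication-by-$t$ sequence $0\to N\xrightarrow{t} N\to\bar N\to 0$. Biadditivity over \emph{that} sequence yields $\h_c^R(M_c,\bar N)=\h_c^R(M_c,N)-\h_c^R(M_c,N)=0$ automatically, while the long exact sequence for $\Tor$ of $0\to N\to N\to\bar N\to 0$ together with $t$ being a nonzerodivisor on $M_c\tensor_RN$ gives $\Tor_1^R(M_c,\bar N)=0$, hence $\Tor_i^R(M_c,\bar N)=0$ for $1\le i\le c$. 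Then \ref{rig2} applied to $(M_c,\bar N)$ kills all higher $\Tor$'s of $(M_c,\bar N)$, and Nakayama (via surjectivity of $t$ on each $\Tor_i^R(M_c,N)$) transfers the vanishing back to $(M_c,N)$. This ``mod out by $t$ and use biadditivity to get $\eta=0$ for free'' maneuver is the missing idea; without it (or an equivalent), your argument does not prove $\h_c^R(M_c,N)=0$, so the rigidity theorem cannot be invoked.
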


Our strategy will be to use the c-rigidity results in the previous
section. The next lemma is critical for our proof:

\begin{lem}\label{vanishinglemcomplete}
Let $R$ be an admissible complete intersection of codimension
$c>0$ and $M,N$ be $R$-modules.
Assume that:\\
(1) $\Tor_i^R(M,N)=0$ for $1\leq i \leq c$. \\
(2) $\depth(N)\geq 1$ and $\depth(M\tensor_RN)\geq 1$. \\
(3) $f_R(M,N)<\infty$ .\\
Then $\Tor_i^R(M,N)=0$ for $i\geq 1$.
\end{lem}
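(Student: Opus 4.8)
The plan is to deduce the vanishing of all higher $\Tor$ from the $c$-rigidity machinery of Section \ref{c-rigid}, specifically Theorem \ref{rig2}, after first checking that the relevant hypotheses are in place. Since $R$ is an admissible complete intersection of codimension $c$, write $\hat R \cong Q/(f_1,\dots,f_c)$ with $Q$ regular; completing does not affect lengths of $\Tor$, depths, or the vanishing statement, so I may assume $R = Q/(f_1,\dots,f_c)$ with $Q$ regular. Hypothesis (3) gives $f_R(M,N)<\infty$, so $\h_c^R(M,N)$ is defined by Theorem \ref{beta2}. The two things I must establish are: first, that $\h_c^R(M,N) = 0$, so that Theorem \ref{rig2} applies and $(M,N)$ is $c$-rigid; and second, that hypothesis (1) — vanishing of $c$ consecutive $\Tor$'s, namely $\Tor_1^R(M,N) = \cdots = \Tor_c^R(M,N) = 0$ — is exactly the input $c$-rigidity needs to conclude $\Tor_i^R(M,N) = 0$ for all $i \geq 1$.

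For the vanishing of $\h_c^R(M,N)$, the idea is to use the depth hypotheses in (2) together with the vanishing in (1) to control $M \tensor_R N$ well enough that the Euler characteristic defining $\h_c$ collapses. The cleanest route: since $\Tor_i^R(M,N) = 0$ for $1 \leq i \leq c$, by Corollary \ref{cor6.2}(2) applied repeatedly along the regular sequence $f_1,\dots,f_c$, the modules $M,N$ already have a good deal of acyclicity over the intermediate rings; in particular one can push the problem down to a hypersurface $R_1 = Q/(f_1)$ where $\theta^{R_1}(M,N) = 2\h_1^{R_1}(M,N)$ is the relevant invariant. The depth conditions $\depth N \geq 1$ and $\depth(M\tensor_R N) \geq 1$, combined with the biadditivity and change-of-rings formulas of Theorem \ref{beta2}(2),(3), should force $\h_c^R(M,N) = 0$: the point is that $\h_c$ is, up to the factor $\tfrac{1}{2^{c-1}c!/\,1}$-type constants in Theorem \ref{beta2}(3), a multiple of $\theta^{R_1}$, and $\theta^{R_1}(M,N)$ vanishes because $M \tensor_{R} N$ (equivalently $M \tensor_{R_1} N$, using (1)) has positive depth while the higher $\Tor^{R_1}$ modules are eventually of finite length supported only at the maximal ideal. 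I would make this precise by invoking the known fact (from \cite{Da1}) that positive depth of the tensor product, under an acyclicity hypothesis, kills $\theta$.

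With $\h_c^R(M,N) = 0$ in hand, Theorem \ref{rig2} says $(M,N)$ is $c$-rigid, and since hypothesis (1) provides exactly $c$ consecutive vanishing $\Tor$'s starting at $i=1$, $c$-rigidity yields $\Tor_i^R(M,N) = 0$ for all $i \geq 1$, which is the claim. The main obstacle I anticipate is the second paragraph: cleanly establishing $\h_c^R(M,N) = 0$ from the depth conditions. The subtlety is that the depth hypotheses are only on $N$ and $M \tensor_R N$ (not on $M$), and one must feed these through the change-of-rings reduction to a hypersurface without losing the acyclicity that makes $\theta^{R_1}$ computable; getting the bookkeeping right on which intermediate $\Tor$ modules vanish (via Corollary \ref{cor6.2}) and verifying that the tensor product's depth is preserved under the pushforward/reduction steps is where the real work lies. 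The rest is a direct application of the rigidity theorem.
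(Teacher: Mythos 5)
Your plan identifies the right target (Theorem \ref{rig2}) but applies it to the wrong pair, and the gap you flag in your second paragraph is real: there is no reason, under hypotheses (1)--(3) alone, that $\h_c^R(M,N)$ itself should vanish, and the route you sketch — pushing down to a hypersurface $R_1=Q/(f_1)$ and arguing that positive depth of $M\tensor_R N$ forces $\theta^{R_1}(M,N)=0$ — is not a known fact and is not established anywhere in the cited sources. Moreover the change-of-rings formula in Theorem \ref{beta2}(3) requires control of $\lcx$ at each intermediate ring, which you have no hypothesis to supply.

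The paper sidesteps this entirely with a short trick you are missing: it never shows $\h_c^R(M,N)=0$ at all. Instead, by hypothesis (2) choose $t\in m$ a nonzerodivisor on both $N$ and $M\tensor_R N$, and set $\bar N = N/tN$. Tensoring $0\to N\xrightarrow{\,t\,} N\to\bar N\to 0$ with $M$ and using that $t$ is a nonzerodivisor on $M\tensor_R N$ gives $\Tor_1^R(M,\bar N)=0$, and then the long exact sequence together with hypothesis (1) gives $\Tor_i^R(M,\bar N)=0$ for $1\le i\le c$. Now the point: biadditivity of $\h_c$ (Theorem \ref{beta2}(2)) applied to that same short exact sequence gives $\h_c^R(M,\bar N)=\h_c^R(M,N)-\h_c^R(M,N)=0$ \emph{automatically}, with no computation needed. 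Theorem \ref{rig2} then gives $\Tor_i^R(M,\bar N)=0$ for all $i\ge 1$, and feeding this back into the long exact sequence shows that $t$ acts surjectively on each $\Tor_i^R(M,N)$, so Nakayama finishes. The lesson is that when you cannot control $\h_c$ of the pair you care about, look for an auxiliary pair built by a short exact sequence on which biadditivity makes $\h_c$ vanish for free, prove rigidity there, and transfer back.
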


\begin{proof}
The depth assumptions ensure that we can choose $t$ a nonzero
divisor for both $N$ and $M\tensor_RN$. Let $\bar{N} = N/tN$.
Tensoring the short exact sequence :
 \[ \xymatrix {0 \ar[r] &N \ar[r]^{t} &N \ar[r] &\bar{N} \ar[r] &0} \]
with $M$ we get :
\[ \xymatrix{0 \ar[r] &\Tor_1^R(M,\bar{N}) \ar[r] &M\tensor_RN \ar[r]^{t} &M\tensor_RN \ar[r] &M\tensor_R \bar{N} \ar[r] &0}\]
which shows that $\Tor_1^R(M,\bar{N}) = 0$. Together with
condition (1), this shows $\Tor_i^R(M,\bar{N})=0$ for $1\leq i
\leq c$. But condition (3) is satisfied for both pair $M,N$ and
$M,\bar{N}$ so :
$$ \h_c^R(M,\bar{N}) = \h_c^R(M,N) - \h_c^R(M,N)= 0$$
The conclusion then follows from theorem \ref{rig2} and Nakayama's
lemma.
\end{proof}

\begin{proof} (of theorem \ref{vanishingcomplete}).
We use induction on $d = \dim (R)$. For $d \leq c$, condition (1)
forces $M$ to be free. Suppose $d = c+1$. Following proposition
\ref{vanishingprop} we have a sequence of modules $M_0 =M$,
$M_1,..M_c$ such that there are exact sequences:
$$ 0\to M_{i-1} \to F_i \to M_i \to 0$$
By the conclusion of \ref{vanishingprop}, for each $i$,
$\Tor_j^R(M_i,N) =0$ for $1\leq j \leq i$. So we have a short
exact sequence :
$$ 0 \to M_{i-1}\tensor_RN \to F_i\tensor_RN \to M_i\tensor_RN \to 0$$
By assumption (2), (3) and the fact that $d = c+1$ we have $\depth
(M\tensor_RN)\geq c+1$ and $\depth (N) \geq c$. Using the ``depth
lemma" repeatedly, we can conclude that $\depth(M_c\tensor_RN) \geq
1$. Now, condition (1) means that $M$ is free on the punctured
spectrum of $R$, and so $f_R(M_c,N) <\infty$. So lemma
\ref{vanishinglemcomplete} applies.

Now assume $d>c+1$. By the induction hypothesis, $\Tor_i^R(M,N)$
has finite length for $i\geq 1$. So an identical argument to the
one above together with lemma \ref{vanishinglemcomplete} give the
desired conclusion.
\end{proof}

\section{Some applications on intersection theory over complete
intersections}\label{intersection}

We present here a few other examples where one can exploit the
properties of the function $\eta^R$ proved in the last chapter. The
first one extends  Hochster's result on dimensional inequality
(\cite{Ho1}).

\begin{thm}\label{heightstable}
Let $R = Q/(f_1,..,f_r)$ be a local complete intersection with $Q$ a
regular local ring satisfying Serre's Positivity conjecture (if
$r=0$, then $R=Q$). Let $M$ be an $R$-module such that $\Supp(M)$
contains the singular locus $\Sing(R)$ of $R$ and $[M]=0$  in
$\overline G(R)_{\mathbb{Q}}$. Then for any $R$ module $R$ such that
$M\tensor_RN$ is finite length we have: $\dim M + \dim N < \dim R +
r$.
\end{thm}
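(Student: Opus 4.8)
The plan is to argue by contradiction: assuming $\dim M+\dim N\geq\dim R+r$, I will show that $\h_r^R(M,N)$ is simultaneously nonzero and zero. We may assume $r\geq 1$ (for $r=0$ the statement reduces to a fact about Serre's multiplicity over $Q$).

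\emph{First, a reduction of $\h_r^R(M,N)$ to an intersection number over $Q$.} Since $M\tensor_RN$ has finite length, $\Supp M\cap\Supp N\subseteq\{m\}$ by Nakayama, and the supports of $M$, of $N$, and of their tensor product are unchanged whether the modules are regarded over $Q$ or over any intermediate ring $R^{(j)}:=Q/(f_1,\dots,f_j)$. Hence $M\tensor_{R^{(j)}}N$ has finite length, so $\ell(\Tor_i^{R^{(j)}}(M,N))<\infty$ for \emph{every} $i$ and $f_{R^{(j)}}(M,N)=0$; together with $Q$ regular this makes each $\h_e^{R^{(j)}}(M,N)$ defined for $e\geq\lcx_{R^{(j)}}(M,N)$, and Theorem \ref{beta1}(3) gives $\lcx_{R^{(j)}}(M,N)\leq j$. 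Applying the change-of-rings formula Theorem \ref{beta2}(3) repeatedly along $R=R^{(r)}\supset R^{(r-1)}\supset\dots\supset R^{(1)}\supset R^{(0)}=Q$ — the passage from $R^{(j)}$ to $R^{(j-1)}$ uses $e=j\geq 2$ and $j-1\geq\lcx_{R^{(j-1)}}(M,N)$ — and closing with the hypersurface case $\h_1^{R^{(1)}}(M,N)=\frac{1}{2}\h_0^Q(M,N)$ (legitimate because $M\tensor_{R^{(1)}}N$ has finite length) gives
\[
 \h_r^R(M,N)\;=\;\frac{1}{2^r\,r!}\,\h_0^Q(M,N)\;=\;\frac{1}{2^r\,r!}\,\chi^Q(M,N),
\]
the last equality by Remark \ref{SHG}. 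Since $\dim Q=\dim R+r$, the assumption and Serre's dimension inequality over the regular ring $Q$ force $\dim M+\dim N=\dim Q$, and then Serre's positivity (assumed for $Q$) gives $\chi^Q(M,N)>0$; hence $\h_r^R(M,N)\neq 0$.

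\emph{Second, the hypotheses on $M$ force $\h_r^R(M,N)=0$.} The role of $\Sing R\subseteq\Supp M$ is to make $N$ locally of finite projective dimension on the punctured spectrum: for $p\neq m$, if $R_p$ is singular then $p\in\Supp M$, so $N_p=0$ because $\Supp M\cap\Supp N\subseteq\{m\}$; and if $R_p$ is regular then $\pd_{R_p}N_p<\infty$ automatically. Consequently, for \emph{every} finitely generated $R$-module $X$ one has $\ell(\Tor_i^R(X,N))<\infty$ for $i\gg 0$ — here one invokes the standard fact that over a complete intersection the eventual support of $\Tor_*^R(X,N)$ equals $\{\,p:\Tor_i^{R_p}(X_p,N_p)\neq 0\ \text{for infinitely many}\ i\,\}$, which by the preceding sentence lies in $\{m\}$. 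So $\h_r^R(X,N)$ is defined for all such $X$ (with $e=r\geq\lcx_R(X,N)$); by Theorem \ref{beta2}(2) it is additive on short exact sequences in $X$; and $\h_r^R(R,N)=0$, since $\Tor_{\geq 1}^R(R,N)=0$ makes the numerator in the defining limit eventually constant while $r\geq 1$. Hence $X\mapsto\h_r^R(X,N)$ descends through $G(R)$, then $\overline{G}(R)$, then $\overline{G}(R)_{\mathbb{Q}}$; as $[M]=0$ there, $\h_r^R(M,N)=0$, contradicting the previous paragraph.

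The step I expect to be the real obstacle is the one in the last paragraph: promoting ``finite projective dimension of $N$ at each nonmaximal prime'' to ``$f_R(X,N)<\infty$ for every finitely generated $X$,'' which is precisely where the complete-intersection structure of $\Tor_*^R(X,N)$ from Section \ref{almost_art} (and the closedness of Tor support varieties) is needed, and then making the descent of $\h_r^R(-,N)$ through $\overline{G}(R)_{\mathbb{Q}}$ fully rigorous. By contrast, the iterated change-of-rings computation is mechanical once the finiteness bookkeeping in the second paragraph is in place.
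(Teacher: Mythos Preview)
Your argument is correct and follows the same route as the paper: show $\h_r^R(-,N)$ is defined on all of $G(R)$ because $N$ has finite projective dimension on the punctured spectrum (from $\Sing R\subseteq\Supp M$ and $\Supp M\cap\Supp N\subseteq\{m\}$), deduce $\h_r^R(M,N)=0$ from biadditivity and $[M]=0$, then apply the change-of-rings formula to get $\chi^Q(M,N)=0$ and conclude via Serre's positivity over $Q$. The step you flag as the ``real obstacle'' is in fact immediate and needs no support-variety machinery: for $p\neq m$ either $N_p=0$ or $R_p$ is regular, and in the latter case $\Tor_i^{R_p}(X_p,N_p)=0$ for $i>\dim R_p\leq\dim R$, giving the uniform bound $f_R(X,N)\leq\dim R+1$.
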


\begin{proof}
Since $\Supp(M)\cap\Supp(N) = \{m\}$ we have $\Sing(R) \cap \Supp(N)
\subset \{m\}$. So $IPD(N) \subset \{m\}$. Therefore for any
$R$-module $N'$, $f_R(N',N)<\infty$ and $\h_r^R(N',N)$ is defined.
Then the fact that $[M]=0$  in $\overline G(R)_{\mathbb{Q}}$ and the
biadditivity of $\h$ (part (2), \ref{beta2}) force  $\h_r^R(M,N)=0$.
Then part (3) of \ref{beta2} indicates that $\chi^Q(M,N)=0$ and we
must have $\dim M +\dim N < \dim Q = \dim R+r$.

\end{proof}

The following example shows that the inequality above is sharp:

\begin{eg}
Let $k$ be an algebraically closed field of characteristic $0$. Let
$Q=k[x_{ij}], 0\le i\le r, 1\le j\le r$ and $m = (x_{ij})$. For
$1\le i\le r$, let $y_i=\sum_{j=1}^{r}x_{0j}x_{ij}$. Let
$R=Q_m/(y_1,,,y_r)$. Then $R$ is a local complete intersection of
codimension $r$ and $\dim R= r^2$.

Let $A$ be the square matrix $[x_{ij}]_{1\le i\le r, 1\le j\le r}$
and $d = \det(A)$. Let $I = (x_{01},...,x_{0r},d)$ and $J$ be the
ideal generated by the entries of $A$. Then
$$\dim R/I + \dim R/J = (r^2-1)+r= \dim R +r-1$$

The short exact sequence :
\[ \xymatrix {0 \ar[r] & R/(x_{01},...,x_{0r}) \ar[r]^{d} & R/(x_{01},...,x_{0r}) \ar[r] & R/I \ar[r] & 0} \]
shows that $R/I$ is $0$ in the Grothendieck group of $R$. It remains
to show $V(I)$ contains the singular locus of R. But $\Sing(R) =
V(I')$, where $I'$ is generated by the $r$-by-$r$ determinants of $B
= \left [ \frac{\partial(y_l)}{\partial x_{ij}} \right ] $. It is
easy to see that $I'$ contains $d,x_{01}^r,...,x_{0r}^r$, therefore
$V(I) \supseteq V(I')$ (they are actually equal).
\end{eg}

The next corollary gives an asymptotic version of the Vanishing
Theorem for complete intersections due to Roberts (cf.
\cite{Ro},13.1.1):

\begin{cor}\label{asym1}
Let $R = Q/(f_1,..,f_r)$ be a local complete intersection with $Q$ a
regular local ring and $M,N$ be finitely generated $R$-modules such
that $M\tensor_RN$ has finite length. Let $a=
\max\{\cx_R(M),\cx_R(N)\}$. Suppose that
 $\dim M + \dim N < \dim R + a$. Then $\eta_a^R(M,N)=0$.
\end{cor}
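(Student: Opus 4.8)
The plan is to reduce the statement to the codimension-one (hypersurface) case, where the analogous fact is already known (Corollary 3.9 of \cite{Da1}, or can be extracted from the properties of $\theta$), and then climb up through the presentation $R = Q/(f_1,\dots,f_r)$ one step at a time using the change-of-rings formula in part (3) of Theorem \ref{beta2}. The key observation is that $\eta_a^R(M,N)$ vanishes as soon as $\lcx_R(M,N) < a$, since then the formula in Theorem \ref{beta1}(2) gives $\beta_i(M,N)$ of order $i^{\lcx_R(M,N)-1} = o(i^{a-1})$, so $\sum_0^n (-1)^i\beta_i/n^a \to 0$. Thus I only need to worry about the case $\lcx_R(M,N) = a$; by Corollary \ref{AB3}, $\tcx_R(M,N) = \lcx_R(M,N) \le \min\{\cx_R M,\cx_R N\} \le a$, so in the interesting case all these complexities equal $a$.

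First I would set $R' = Q/(f_1,\dots,f_{r-1})$ and run an induction on $r$. For the inductive step, note that $M\tensor_R N$ has finite length, so certainly $f_{R'}(M,N) < \infty$ (indeed all higher $\Tor^{R'}$ have finite length). If $\lcx_{R'}(M,N) \le a-1$, then as above $\eta_{a-1}^{R'}(M,N) = 0$; if moreover $a \ge 2$ and $a - 1 \ge \lcx_{R'}(M,N)$, the change of rings formula $\eta_a^R(M,N) = \frac{1}{2a}\eta_{a-1}^{R'}(M,N)$ gives $\eta_a^R(M,N) = 0$ immediately. So the heart of the matter is to verify that the dimension hypothesis descends to $R'$: one wants $\dim M + \dim N < \dim R' + (r-1)$ to still hold — but $\dim R' = \dim R + 1$ and the codimension drops by one, so the inequality is unchanged, $\dim M + \dim N < \dim R + a$ with $\dim R' + (r-1) = \dim R + r$; hence it holds whenever $a \le r$, which is automatic since $a = \max\{\cx_R M, \cx_R N\} \le r$. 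The base case $r = 0$ (so $R = Q$ regular, $a = 0$) is Serre's vanishing: $\dim M + \dim N < \dim R$ forces $\chi^R(M,N) = \eta_0^R(M,N) = 0$, which is a theorem of Roberts (and of P.\ Gabber); the case $a=0$, $r\ge 1$ forces $\lcx_R(M,N)=0$ so there is nothing to prove; and the case $r=1$, $a=1$ is the hypersurface statement from \cite{Da1}.

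The one subtlety I would treat carefully is the degenerate situation $a = 1$ with $r \ge 2$: here $\lcx_R(M,N) \le 1$, and if it equals $1$ I need $\eta_1^R(M,N) = 0$. By the previous paragraph this reduces to $\eta_0^{R'}(M,N) = \chi^{R'}(M,N)$ where $R'$ has codimension $r-1 \ge 1$, which is not a regular ring, so Serre's $\chi$ is not literally available; instead I would peel off hyperplane sections one at a time down to a hypersurface, at each stage keeping track that $\lcx$ drops (since $\tcx_R(M,N) \le \cx_R M \le 1$ means over every $R'$ in the tower the Tor-complexity is at most $1$), and apply the last displayed formula of Theorem \ref{beta2}(3), $\eta_1^{R}(M,N) = \frac{1}{2}\eta_0^{R'}(M,N)$, only at the final hypersurface step where the hypersurface result of \cite{Da1} applies. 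The main obstacle, then, is purely bookkeeping: ensuring that at every level of the tower $Q = R_0 \supseteq R_1 \supseteq \cdots \supseteq R_r = R$ the hypotheses of Theorem \ref{beta2}(3) (namely $e \ge 2$ and $e-1 \ge \lcx_{R'}$, or the $e=1$ exception) are met, together with the finite-length condition on $\Tor$, so that the recursion $\eta_a^{R_i} = \frac{1}{2a}\eta_{a-1}^{R_{i-1}}$ can be iterated down to a ring where a vanishing theorem is already in hand.
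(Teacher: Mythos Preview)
Your induction on $r$ has a genuine gap: when you pass from $R$ to $R' = Q/(f_1,\dots,f_{r-1})$ by dropping the \emph{given} element $f_r$, there is no reason that $\max\{\cx_{R'}M,\cx_{R'}N\}$ (or $\lcx_{R'}(M,N)$) must drop to $a-1$. From the change-of-rings long exact sequence one only gets $\lcx_R(M,N)-1 \le \lcx_{R'}(M,N) \le \lcx_R(M,N)$, and which value occurs depends on the specific hypersurface section being removed. If $\lcx_{R'}(M,N)=a$, then $\eta_{a-1}^{R'}(M,N)$ is not even defined and Theorem~\ref{beta2}(3) cannot be invoked; if $\lcx_{R'}(M,N)=a-1$ but $a'=\max\{\cx_{R'}M,\cx_{R'}N\}=a$, then your inductive hypothesis only yields $\eta_{a}^{R'}=0$, not the $\eta_{a-1}^{R'}=0$ you need. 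This is not bookkeeping---it is the heart of the matter, and your proposal does not address it.

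The paper's proof resolves exactly this by \emph{choosing} the factorization rather than using the one handed to you. By Avramov's theorem (9.3.1 in \cite{Av2}) one can factor $Q\to R$ as $Q\to R'\to R$ with $R'\to R$ of codimension $a$ and $\cx_{R'}M=\cx_{R'}N=0$; geometrically, a generic linear subspace of codimension $a$ in $\widetilde k^{\,r}$ avoids the support varieties of $M$ and $N$ (each of dimension $\le a$) away from the origin. Over this $R'$ both modules have finite projective dimension, $\dim R'=\dim R+a>\dim M+\dim N$, and Roberts' Vanishing Theorem for complete intersections gives $\chi^{R'}(M,N)=\eta_0^{R'}(M,N)=0$. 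Now the iterated change of rings in Theorem~\ref{beta2}(3) runs cleanly down the length-$a$ tower from $R'$ to $R$, since at each intermediate stage the complexity is bounded by the number of steps taken, and one concludes $\eta_a^R(M,N)=0$. Your base cases (Serre for $r=0$, the hypersurface case from \cite{Da1}) are not what is actually needed; the base of the argument is Roberts' theorem over the intermediate complete intersection $R'$.
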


\begin{proof}
Let $b=r-a$. By theorem 9.3.1 in \cite{Av2} we can factor the
surjection $Q\to R$ as $Q\to R' \to R$ such that the kernels of both
maps are generated by regular sequence, with the first one having
length $b$, and $\cx_{R'}(M) = \cx_{R'}(N) = 0$. Applying Roberts'
theorem and \ref{beta2}, we see that the result follows.
\end{proof}

In our view, the greatest potential of this study is the link
between asymptotic homological algebra, a new and rapidly developing
field, and the classical homological questions. To further
illustrate this link, let us look at a well-known unsolved question
(see \cite{PS}), inspired by Serre's results on intersection
multiplicity:
\begin{ques}
Let $R$ be a local ring and $M,N$ be $R$-modules such that
$\ell(M\tensor_RN)<\infty$ and $\pd_RM <\infty$. Is it always true
that  $\dim M +\dim N \leq \dim R$?
\end{ques}

In view of \ref{asym1}, we would like to pose the following
``asymptotic" form of the above question:
\begin{ques}\label{asymp_serre}
Let $R$ be a local complete intersection and $M,N$ be $R$-modules
such that $\ell(M\tensor_RN)<\infty$. Is it always true that:
\begin{enumerate}
\item $\dim M +\dim N \leq \dim R + \cx(M)$?
\item $\dim M +\dim N \leq \dim R + \tcx(M,N)$?
\end{enumerate}

\end{ques}

Obviously, in view of \ref{AB3}, part (2) of the above (if true)
would be stronger than part (1). Also, part (1) and (2) are
equivalent for hypersurafces and hold true if $\hat R$ is a
hypersurface in a unramified or equicharacteristic regular local
ring (cf. \cite{HW1}, 1.9 and \cite{Da1}, 2.5).


\begin{thebibliography}{BroSh}

\bibitem[AB1]{AB1} L.L. Avramov, R-.O. Buchweitz, \emph{Support varieties and cohomology over
complete intersections}, Invent. Math. \textbf{142} (2000),
285--318.


\bibitem[AB2]{AB2} L.L. Avramov, R-.O Buchweitz, \emph{Homological algebra modulo a regular sequence with special attention to codimension
two}, J. Algebra \textbf{230} (2000), 24--67.



\bibitem[AG1]{AG1} M. Auslander, O. Goldman, \emph{Maximal orders},
Trans. Amer. Math. Soc. \textbf{97} (1960), 1--24.

\bibitem[AG2]{AG2} M. Auslander, O. Goldman, \emph{The Brauer group of a commutative ring},
Trans. Amer. Math. Soc. \textbf{97}  (1960), 367--409.

\bibitem[AGP]{AGP} L.L. Avramov, V.N. Gasharov, I.V. Peeva, \emph{Complete intersection dimension},
 Publ. Math. I.H.E.S. \textbf{86} (1997), 67--114.


\bibitem[AS] {AS} L.L. Avramov, L-C. Sun, \emph{Cohomology operators defined by a deformation},
J. Algebra \textbf{204}, (1998), 684--710.



\bibitem[Au1]{Au1} M. Auslander,
 \emph{Modules over unramified regular local rings},
Ill. J. Math. \textbf{5} (1961), 631--647.

\bibitem[Au2]{Au2} M. Auslander, \emph{On the purity of the branch
locus}, Amer. J. Math. \textbf{84} (1962), 116--125.


\bibitem[Av1]{Av1} L.L. Avramov, \emph{Modules of finite virtual projective dimension},
Invent. Math. \textbf{96} (1989), 71--101.

\bibitem[Av2]{Av2} L.L. Avramov, \emph{Infinite free resolutions}, Six
lectures in commutative algebra (Bellaterra, 1996), Progress in
Math. \textbf{166}, Birkhaus\"er, Boston, (1998), 1--118.





\bibitem[BH]{BH} W. Bruns, J. Herzog, \emph{Cohen-Macaulay rings}, Cambridge Univ. Press, Cambridge (1996).

\bibitem[Bu]{Bu} R-.O. Buchweitz, \emph{Maximal Cohen-Macaulay modules and Tate cohomology
over Gorenstein rings}, Preprint, Unive. Hannover, (1986).


\bibitem[Da1]{Da1} H. Dao, \emph{Decency and rigidity over hypersurfaces}, submitted, arXiv math.AC/0611568.

\bibitem[Da2]{Da2} H. Dao, \emph{Some observations on local and projective hypersurfaces},
 Math. Res. Lett., to appear, arXiv math.AC/0701881.

\bibitem[Da3]{Da3} H. Dao, \emph{Extensions of theorems by Auslander and purity of
branch locus}, in preparation.

\bibitem[Da4]{Da4} H. Dao, \emph{On splitting of algebraic vector bundles}, in preparation.


\bibitem[Du1]{Du1} S. Dutta, \emph{A Theorem on Smoothness-Bass-Quillen, chow groups and Intersection
Multiplicity of Serre}, Tran. Amer. Math. Soc.\textbf{352} (2000)
1635-1645.

\bibitem[Du2]{Du2} S. Dutta, \emph{On negativity of higher Euler characteristics},
Amer. J. Math. \textbf{126} (2004), 1341--1354.

\bibitem[Ei]{Ei} D. Eisenbud, \emph{Homological algebra on a complete intersection
,with an application to group representations}, Tran. Amer. Math.
Soc.\textbf{260} (1980), 35--64.

\bibitem[EG]{EG} E.G. Evans, P. Griffith, \emph{Sygyzies}, Lond. Math. Soc. Lect. Notes
\textbf{106} (1985).



\bibitem[Fa]{Fa} G. Faltings, \emph{Ein Kriterium f\"{u}r
vollst\"{a}ndige Durchschnitte}, Invent. Math. \textbf{82} (1981),
393--402.

\bibitem[Fu]{Fu} W. Fulton, \emph{Intersection Theory}, Springer-Verlag, Berlin (1998).

\bibitem[GD]{GD} A. Grothendieck, J. Dieudonn\'e \emph{El\'ements de g\'eometrique alg\'ebrique,  IV }
Publ. Math. I.H.E.S. \textbf{24} (1965).

\bibitem[Gu]{Gu} T.H Gulliksen, \emph{A change of ring theorem with applications to
Poincar\'e series and intersection multiplicity}, Math. Scan
\textbf{34} (1974), 167--183.

\bibitem[Ha]{Ha}R. Hartshorne, \emph{Algebraic Geometry}, Graduate Text in Mathematics,
Springer-Verlag, New York, (1977).


\bibitem[Ho1]{Ho1} M. Hochster, \emph{The dimension of an intersection in an ambient
hypersurface}, Proceedings of the First Midwest Algebraic Geometry
Seminar (Chicago Circle,1980), Lecture Notes in Mathematics
\textbf{862},Springer-Verlag, 1981, 93-106.

\bibitem[Ho2]{Ho2} M. Hochster, \emph{Euler characteristics over unramified regular local rings},
Ill. J. Math.  \textbf{28} (1984), 281-288.

\bibitem[Ho3]{Ho3} M. Hochster, \emph{Nonnegativity of intersection multiplicities in ramified regular
local rings following Gabber/De Jong/Berthelot}, unpublished
notes.


\bibitem[HW1]{HW1} C. Huneke, R. Wiegand, \emph{Tensor products of modules and the rigidity of Tor},
Math. Ann. \textbf{299} (1994), 449-476.

\bibitem[HW2]{HW2} C. Huneke, R. Wiegand, \emph{Tensor products of modules, rigidity and local cohomology},
Math. Scan. \textbf{81} (1997), 161-183.

\bibitem[HW3]{HW3} C. Huneke, R. Wiegand, \emph{Corrections to "Tensor products of modules and the rigidity of
Tor"}, to appear in Math. Ann., available online at
http://www.math.unl.edu/~rwiegand1/TorFix/erratum7-06.pdf .

\bibitem[HJW]{HJW} C. Huneke, R. Wiegand, D. Jorgensen,  \emph{Vanishing theorems for complete
intersections}, J. Algebra \textbf{238} (2001), 684-702.

\bibitem[Jo1]{Jo1} D. Jorgensen,  \emph{Tor and torsion on a complete intersection},
J. Algebra \textbf{195} (1996), 526-537.

\bibitem[Jo2]{Jo2} D. Jorgensen,
\emph{Complexity and Tor on a complete intersection}, J. Algebra
\textbf{211} (1999), 578-598.

\bibitem[Jot]{Jot} P. Jothilingam, \emph{ A note on grade}, Nagoya Math. J.
\textbf{59} (1975), 149--152.



\bibitem[Ki]{Ki} D. Kirby, \emph{Artinian modules and Hilbert polynomials}, Quar. J.
Math. \textbf{24}, Oxford (1973), 47-57.




\bibitem[Li]{Li} S. Lichtenbaum, \emph{On the vanishing of Tor in regular local rings},
Ill. J. Math. \textbf{10} (1966), 220-226.

\bibitem[MNP]{MNP} J. Migliore, U. Nagel and C. Peterson, \emph{Bezout's theorem and
Cohen-Macaulay modules}, Math. Zeit. \textbf{237} (2001) 373--394.


\bibitem[Mu]{Mu} M.P. Murthy, \emph{Modules over regular local rings},
Ill. J. Math. \textbf{7} (1963), 558-565.



\bibitem[PS]{PS} C. Peskine, L. Szpiro, \emph{Dimension projective finie et cohomologie locale.
Applications $\grave{a}$ la d$\acute{e}$monstration de conjectures
de M. Auslander, H. Bass et A. Grothendieck}, Inst. Hautes Études
Sci. Publ. Math. \textbf{42} (1973), 47--119.

\bibitem[Ro]{Ro} P. Roberts, \emph{Multiplicities and Chern classes in Local Algebra},
Cambridge Univ. Press, Cambridge (1998).

\bibitem[Se]{Se} J.P. Serre, \emph {Alg\`{e}bre locale.\ Multiplicit\'{e}s},
Lect. Note in Math. \textbf{11}, Springer-Verlag, Berlin, New
York, 1965.


\end{thebibliography}
\end{document}